\setlist[enumerate,1]{label=\textup{(\arabic*)}}
\tikzset{cd/.style=matrix of math nodes,row sep=2em,column sep=2em, text height=1.5ex, text depth=0.5ex}
\tikzset{cdar/.style=->,auto}
\tikzset{mid/.style={anchor=mid}} 
\tikzset{dar/.style={double,double equal sign distance,-implies}}
\tikzset{narrowfill/.style={inner sep=1pt, fill=white}}
\renewcommand*{\PrintDOI}[1]{\href{http://dx.doi.org/\detokenize{#1}}{doi: \detokenize{#1}}}
\numberwithin{equation}{section}
\theoremstyle{plain}
\newtheorem{theorem}[equation]{Theorem}
\newtheorem{lemma}[equation]{Lemma}
\newtheorem{proposition}[equation]{Proposition}
\newtheorem{corollary}[equation]{Corollary}
\theoremstyle{definition}
\newtheorem{definition}[equation]{Definition}
\theoremstyle{remark}
\newtheorem{remark}[equation]{Remark}
\newcommand*{\s}{s}
\newcommand*{\rg}{r}
\newcommand*{\Hilm}[1][H]{\mathcal #1}
\newcommand{\idealin}{\mathrel{\triangleleft}} 
\newcommand*{\C}{\mathbb C}
\newcommand*{\Z}{\mathbb Z}
\newcommand*{\N}{\mathbb N}
\newcommand*{\T}{\mathbb T}
\newcommand*{\Bound}{\mathbb B}
\newcommand*{\Comp}{\mathbb K}
\newcommand*{\Mat}{\mathbb M}
\newcommand*{\Ideals}{\mathbb I}
\newcommand*{\Idealsc}{\mathbb I_\mathrm p}
\newcommand*{\Idealss}{\mathbb I_\mathrm s}
\newcommand*{\Open}{\mathbb O}
\newcommand*{\Openc}{\mathbb{CO}}
\newcommand*{\Mult}{\mathcal M}
\newcommand*{\Cont}{\mathrm C}
\newcommand*{\reg}{\mathrm{reg}}
\newcommand*{\Eext}{\mathfrak{Ext}}
\newcommand*{\id}{\textup{id}}
\newcommand*{\full}{\mathrm{f}}
\newcommand*{\KK}{\textup{KK}}
\newcommand*{\K}{\textup{K}}
\newcommand*{\prop}{\mathcal{P}}
\newcommand*{\ev}{\textup{ev}}
\newcommand*{\Cst}{\textup C^*}
\newcommand*{\Cstar}{\texorpdfstring{$\textup C^*$\nb-}{C*-}}
\newcommand*{\Star}{\texorpdfstring{$^*$\nb-}{*-}}
\newcommand*{\Fam}{\mathcal{F}}
\newcommand*{\U}{\mathcal U}
\newcommand*{\nb}{\nobreakdash}  
\newcommand*{\alb}{\hspace{0pt}} 
\DeclarePairedDelimiter{\abs}{\lvert}{\rvert}
\DeclarePairedDelimiter{\norm}{\lVert}{\rVert}
\DeclarePairedDelimiter{\ket}{\lvert}{\rangle}
\DeclarePairedDelimiter{\bra}{\langle}{\rvert}
\DeclarePairedDelimiterX{\braket}[2]{\langle}{\rangle}{#1\,\delimsize\vert\,\mathopen{}#2}
\DeclarePairedDelimiterX{\setgiven}[2]{\{}{\}}{#1\,{:}\,\mathopen{}#2}
\DeclareMathOperator{\supp}{supp}
\DeclareMathOperator{\Ext}{Ext}
\DeclareMathOperator{\Hom}{Hom}
\DeclareMathOperator{\Prim}{Prim}
\newcommand*{\congto}{\xrightarrow\sim}
\newcommand*{\defeq}{\mathrel{\vcentcolon=}}
\newcommand*{\into}{\rightarrowtail}
\newcommand*{\prto}{\twoheadrightarrow}
\begin{document}

\title{Correspondences between graph C*-algebras}

\author{Rasmus Bentmann}
\email{rasmusbentmann@gmail.com}

\author{Ralf Meyer}
\email{rmeyer2@uni-goettingen.de}

\address{Mathematisches Institut\\
  Georg-August Universit\"at G\"ottingen\\
  Bunsenstra\ss{}e 3--5\\
  37073 G\"ottingen\\
  Germany}

\begin{abstract}
  We describe proper correspondences from graph \Cstar{}algebras
  to arbitrary \Cstar{}algebras by \(\K\)\nb-theoretic data.  If
  the target \Cstar{}algebra is a graph \Cstar{}algebra as
  well, we may lift an isomorphism on a certain invariant to
  correspondences back and forth that are inverse to each other up
  to ideal-preserving homotopy.  This implies a classification
  theorem for purely infinite graph \Cstar{}algebras up to
  stable isomorphism.
\end{abstract}

\subjclass[2010]{Primary 46L35; Secondary 19K35}

\thanks{Supported by the DFG grant ``Classification of non-simple
  purely infinite C*-algebras.''  This work is part of the project
  Graph Algebras partially supported by EU grant
  HORIZON-MSCA-SE-2021 Project 101086394.}

\maketitle

\section{Introduction}
\label{sec:intro}

Graph \Cstar{}algebras form a class of \Cstar{}algebras
that are well-behaved in various ways.  For instance, they are
all nuclear and satisfy the Universal Coefficient Theorem.
Moreover, their ideal structure and \(\K\)\nb-theory is
well-understood in terms of the underlying graph, and the
\(\K\)\nb-theory has some special properties: the \(\K_1\)\nb-group
of a graph \Cstar{}algebra is always free, and the exponential map
associated to a (gauge-invariant) ideal in a graph \Cstar{}algebra
always vanishes.  In addition, every simple graph \Cstar{}algebra
is either approximately finite-dimensional or purely infinite.
(See \cite{Raeburn:Graph_algebras} for (references for) these
well known facts.)  Hence it is natural to explore what the
general classification theory of \Cstar{}algebras has to say
about \Cstar{}algebras with these features.  Among other
places, this has been done in \cites{MR2562779,
Eilers-Restorff-Ruiz:Strong_class_of_ext, Eilers-Tomforde:Graph,
MR3056712, MR3194162, MR2666426} for the mixed cases and in
\cites{Meyer-Nest:Filtrated_K, Arklint-Bentmann-Katsura:Reduction,
Bentmann:Real_rank_zero_and_int_cancellation,
Bentmann-Meyer:More_general} for the (strongly) purely infinite case.
While many results have been proved, a classification of nonunital graph
\Cstar{}algebras along these lines is not yet in reach.

The approach to classification using the symbolic dynamics associated
to the underlying graph has been very successful recently, culminating
in the complete classification of unital graph \Cstar{}algebras
(see~\cite{Eilers-Restorff-Ruiz-Sorensen:Complete} and the references
therein, and~\cites{Restorff:Classification, Rordam:Class_of_CK} for
earlier results).  Unitality is, however, a strong restriction for
graph \Cstar{}algebras because the unitisation of a graph
\Cstar{}algebra need not be a graph \Cstar{}algebra.  A graph
\Cstar{}algebra is unital if and only if the underlying graph has
finitely many vertices.  It seems difficult to generalise
the ideas of~\cite{Eilers-Restorff-Ruiz-Sorensen:Complete} to the
nonunital case.

The present article introduces a new approach to classify graph
\Cstar{}algebras.  The goal is to understand (proper)
\Cstar{}correspondences out of graph \Cstar{}algebras well enough to
be able to establish existence and uniqueness results for proper
\Cstar{}correspondences lifting isomorphisms on suitable
invariants.  We achieve this with uniqueness up to
(ideal-preserving) homotopy.  This is the main technical
achievement of the article.  Uniqueness up to homotopy is, however, too
weak for Elliott's Approximate Intertwining Argument.  Hence our
results do not suffice for a general classification of graph
\Cstar{}algebras up to stable isomorphism.  For purely infinite graph
\Cstar{}algebras, however, it implies an equivalence in Kirchberg's
KK-theory, which is enough to apply Kirchberg's Classification
Theorem~\cite{Kirchberg:Michael}.  So we get a classification result
for all purely infinite (possibly nonunital) graph \Cstar{}algebras
up to stable isomorphism.  In general, our method only gives
a classification theorem for graph \Cstar{}algebras up to
ideal-preserving homotopy equivalence.

We now describe the sections of this article.  In
Section~\ref{sec:corr_vs_homomorphism}, we collect some basic
results about proper \Cstar{}correspondences, their induced maps on
ideal lattices and their relation to \Star{}homomorphisms into
stabilised \Cstar{}algebras.  Section~\ref{sec:corr_out_of_graph}
studies proper \Cstar{}correspondences out of graph
\Cstar{}algebras.  The main result says that any homomorphism
between the monoids of equivalence classes of projections lifts to a
proper \Cstar{}correspondence if the domain is a graph
\Cstar{}algebra.  Section~\ref{sec:Proj_to_ideal-K} relates the
monoid of equivalence classes of projections to the ideal lattice
and the ordered \(\K_0\)\nb-groups of ideals, especially in the case
of graph \Cstar{}algebras.  Section~\ref{sec:diagrams} discusses
some homological algebra for certain diagrams of Abelian groups,
especially the computation of the Ext groups, which is where the
obstruction class and other homological invariants of
correspondences live.
Section~\ref{sec:add_K1} adds
information related to~\(\K_1\) to the picture.  We show that a pair
of maps on equivalence classes of projections and on~\(\K_1\) is
realised by a proper \Cstar{}correspondence if and only if a certain
variant of the obstruction class
of~\cite{Bentmann-Meyer:More_general} vanishes.
In Section~\ref{sec:purely_inf}, we combine
our new results with Kirchberg's Classification Theorem to classify
purely infinite graph \Cstar{}algebras.  In Sections
\ref{sec:homotopy_corres} and~\ref{sec:circles}, we allow arbitrary
graph \Cstar{}algebras again and prove an ideal-preserving
homotopy equivalence instead of an isomorphism under the same
assumptions as in Section~\ref{sec:purely_inf}.  More precisely,
Section~\ref{sec:homotopy_corres} only proves a homotopy equivalence
preserving gauge-invariant ideals.  Section~\ref{sec:circles}
contains some extra arguments that allow to prove an
ideal-preserving homotopy equivalence for all ideals,
gauge-invariant or not.

The authors thank the referee for carefully reading the article and
spotting an error in an earlier version of this article.

\section{Correspondences versus homomorphisms into stabilisations}
\label{sec:corr_vs_homomorphism}

This section contains results for general \(\sigma\)\nb-unital
\Cstar{}algebras,
which we are going to specialise to graph \Cstar{}algebras
in later sections.  Let \(A\)
and~\(B\)
be two \Cstar{}algebras.  We are going to show that proper
\Cstar{}correspondences
between \(A\)
and~\(B\)
are ``equivalent'' to \Star{}homomorphisms from~\(A\)
to the \Cstar{}stabilisation
\(B\otimes\Comp\)
when~\(A\)
is \(\sigma\)\nb-unital.
Here \(\Comp\defeq \Comp(\ell^2\N)\)
is the \Cstar{}algebra
of compact operators on a separable Hilbert space.  If \(A\)
is \Cstar{}stable
and both \(A\)
and~\(B\)
are \(\sigma\)\nb-unital,
then full, proper correspondences are equivalent to nondegenerate
\Star{}homomorphisms \(A\to B\otimes\Comp\).
Let \(\Ideals(A)\)
be the lattice of closed ideals of~\(A\).
A \Star{}homomorphism
\(\varphi\colon A\to B\otimes\Comp\)
induces a map \(\Ideals(\varphi)\colon \Ideals(A)\to\Ideals(B)\).
We describe it in terms of the proper \Cstar{}correspondence
corresponding to~\(\varphi\).
All this is either easy or follows from the proofs by Mingo and
Phillips~\cite{Mingo-Phillips:Triviality} of the Kasparov
Stabilisation Theorem and the Brown--Green--Rieffel Theorem.

First we recall some definitions.  An \emph{\(A,B\)-correspondence}
is a (right) Hilbert \(B\)\nb-module~\(\Hilm\)
with a nondegenerate \Star{}homomorphism~\(\varphi\)
from~\(A\)
to the \Cstar{}algebra \(\Bound(\Hilm)\)
of adjointable operators on~\(\Hilm\).
It is \emph{proper} if~\(\varphi(A)\)
is contained in the \Cstar{}algebra~\(\Comp(\Hilm)\)
of compact operators on~\(\Hilm\).
Let \(\braket{\Hilm}{\Hilm}\idealin B\)
be the ideal generated by the inner products \(\braket{x}{y}\)
for \(x,y\in\Hilm\).
Call~\(\Hilm\)
\emph{full} if \(\braket{\Hilm}{\Hilm}=B\).
An \emph{isomorphism} between two \(A,B\)-correspondences
\(\Hilm_1\)
and~\(\Hilm_2\)
is a unitary operator \(u\colon \Hilm_1 \congto \Hilm_2\)
of right Hilbert \(B\)\nb-modules
that intertwines the left actions of~\(A\),
that is, \(u(a\cdot\xi) = a\cdot(u\xi)\)
for all \(a\in A\),
\(\xi\in\Hilm_1\).
The \emph{direct sum} \(\Hilm_1\oplus\Hilm_2\)
of two correspondences is defined in the obvious way.  It is again
proper if \(\Hilm_1\) and~\(\Hilm_2\) are proper.  And
\begin{equation}
  \label{eq:ideal_for_sum}
  \braket{\Hilm_1\oplus\Hilm_2}{\Hilm_1\oplus\Hilm_2}
  = \braket{\Hilm_1}{\Hilm_1} + \braket{\Hilm_2}{\Hilm_2},
\end{equation}
so that \(\Hilm_1\oplus\Hilm_2\)
is full if \(\Hilm_1\)
or~\(\Hilm_2\)
is full.  The direct sum operation descends to isomorphism classes, is
associative, and has the zero element~\(\{0\}\).
Thus isomorphism classes of proper \(A,B\)-correspondences
with the operation~\(\oplus\)
induced by the direct sum of correspondences form a monoid.  We denote
it by \(\prop(A,B)\).

An \(A,B\)-correspondence~\(\Hilm\)
induces a map \(\Ideals(\Hilm)\colon \Ideals(A)\to\Ideals(B)\)
by mapping \(I\idealin A\)
to the ideal \(\braket{I\Hilm}{I\Hilm}\idealin B\)
generated by the inner products \(\braket{ax}{by}\)
with \(a,b\in I\), \(x,y\in \Hilm\).  Since \(I^*\cdot I=I\), we have
\[
\braket{I\Hilm}{I\Hilm}
= \braket{\Hilm}{I\Hilm}
= \braket{I\Hilm}{\Hilm}.
\]
Therefore, the map \(\Ideals(\Hilm)\)
commutes with arbitrary suprema, that is,
\[
\Ideals(\Hilm)\biggl(\bigvee_{I\in S} I\biggr)
= \biggl(\bigvee_{I\in S} \Ideals(\Hilm)(I) \biggr)
\]
for any set~\(S\)
of ideals in~\(A\),
where \(\bigvee_{I\in S} I = \overline{\sum_{I\in S} I}\)
is the supremum of~\(S\)
in~\(\Ideals(A)\).
As a consequence, the map \(\Ideals(\Hilm)\)
is monotone and preserves the zero ideals.  It need not commute with
intersections, however.  And
\(\Ideals(\Hilm)(A) = \braket{\Hilm}{\Hilm}\)
is only equal to~\(B\) if~\(\Hilm\) is full.

Let \(\Hilm^\infty \defeq \Hilm\otimes \ell^2(\N)\)
or, equivalently, the direct sum of countably many copies
of~\(\Hilm\).
There is an isomorphism
\(\Comp(\Hilm)\otimes\Comp \congto \Comp(\Hilm^\infty)\),
which maps \(S\otimes T \in \Comp(\Hilm)\otimes\Comp\)
to the unique operator \(S\otimes T\)
on \(\Hilm\otimes \ell^2\N\)
that satisfies \((S\otimes T)(x\otimes y) \defeq S(x) \otimes T(y)\)
for all \(x\in \Hilm\),
\(y\in \ell^2\N\).
In particular, \(B\otimes\Comp \cong \Comp(B^\infty)\)
because \(B\cong \Comp(B)\)
when we view~\(B\) as a Hilbert \(B\)\nb-module in the obvious way.

A \Star{}homomorphism \(\varphi\colon A\to B\otimes\Comp\)
induces a proper \(A,B\)-correspondence~\(\Hilm_\varphi\)
as follows.  The underlying Hilbert \(B\)\nb-module
is \(\Hilm\defeq \varphi(A)\cdot B^\infty \subseteq B^\infty\),
and~\(A\)
acts nondegenerately on~\(\Hilm\)
through~\(\varphi\).
If~\(\varphi\)
is nondegenerate, then \(\Hilm = B^\infty\)
and thus~\(\Hilm_\varphi\)
is full.  The next proposition reverses this construction if~\(A\)
is \(\sigma\)\nb-unital,
that is, if~\(A\) has a sequential approximate unit.

\begin{proposition}
  \label{pro:proper_corr_to_homomorphism}
  If~\(A\)
  is \(\sigma\)\nb-unital,
  then any proper \(A,B\)-correspondence~\(\Hilm\)
  is isomorphic to~\(\Hilm_\varphi\)
  for a \Star{}homomorphism \(\varphi\colon A\to B\otimes\Comp\).
  If \(A\)
  and~\(B\)
  are \(\sigma\)\nb-unital
  and \(A\)
  is \Cstar{}stable,
  then any proper, full \(A,B\)-correspondence~\(\Hilm\)
  is isomorphic to~\(\Hilm_\varphi\)
  for a nondegenerate \Star{}homomorphism
  \(\varphi\colon A\to B\otimes\Comp\).
\end{proposition}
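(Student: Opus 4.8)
The plan is to realise~$\Hilm$ as a complemented Hilbert submodule of~$B^\infty$ using the Kasparov Stabilisation Theorem and then to transport the left action of~$A$ along this embedding. Throughout write $\varphi_0\colon A\to\Comp(\Hilm)$ for the (nondegenerate, properness-valued) left action.

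The first thing to check is that~$\Hilm$ is countably generated. Since~$A$ is $\sigma$\nb-unital it has a sequential approximate unit~$(u_n)$, and as~$\varphi_0$ is nondegenerate we have $\varphi_0(u_n)\xi\to\xi$ for every $\xi\in\Hilm$. Approximating each compact operator $\varphi_0(u_n)$ in norm by finite-rank operators and collecting all the Hilbert-module vectors that occur yields a countable set whose closed $B$\nb-span contains every $\varphi_0(u_n)\Hilm$, hence all of~$\Hilm$. For the first statement one then invokes the Kasparov Stabilisation Theorem in the form proved by Mingo and Phillips~\cite{Mingo-Phillips:Triviality}: there is a unitary $u\colon\Hilm\congto p\cdot B^\infty$ of Hilbert $B$\nb-modules for some projection $p\in\Bound(B^\infty)$ (recall $\Bound(B^\infty)=\Mult(\Comp(B^\infty))\cong\Mult(B\otimes\Comp)$). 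Conjugation by~$u$ identifies $\Comp(\Hilm)$ with the corner $\Comp(p\cdot B^\infty)=p(B\otimes\Comp)p$, and because~$\Hilm$ is proper the composite $\varphi\colon A\xrightarrow{\varphi_0}\Comp(\Hilm)\cong p(B\otimes\Comp)p\hookrightarrow B\otimes\Comp$ is a \Star{}homomorphism. Since $\varphi(a)=p\varphi(a)p$ annihilates $(1-p)\cdot B^\infty$ and acts on $p\cdot B^\infty$ through $u\varphi_0(a)u^{*}$, we get $\varphi(A)\cdot B^\infty=\overline{u(\varphi_0(A)\cdot\Hilm)}=u(\Hilm)=p\cdot B^\infty$, and~$u$ intertwines the $A$\nb-actions; thus $u\colon\Hilm\congto\Hilm_\varphi$ is an isomorphism of correspondences.

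For the second statement we additionally have that~$A$ is \Cstar{}stable, $B$ is $\sigma$\nb-unital, and~$\Hilm$ is full. It suffices to produce an isomorphism $u\colon\Hilm\congto B^\infty$ of Hilbert $B$\nb-modules, for then transporting~$\varphi_0$ along~$u$ gives a nondegenerate \Star{}homomorphism $\varphi\colon A\to\Bound(B^\infty)$ which, by properness of~$\Hilm$, takes values in $\Comp(B^\infty)=B\otimes\Comp$, and $\Hilm_\varphi=B^\infty\cong\Hilm$ as correspondences via~$u^{-1}$. To obtain such a~$u$, I would first observe that $\Comp(\Hilm)$ is \Cstar{}stable: extend~$\varphi_0$ to a strictly continuous unital \Star{}homomorphism $\Mult(A)\to\Bound(\Hilm)=\Mult(\Comp(\Hilm))$ and apply it to a sequence of isometries in~$\Mult(A)$ with pairwise orthogonal ranges summing strictly to~$1$ (such a sequence exists because~$A$ is \Cstar{}stable) to get such a sequence in~$\Bound(\Hilm)$, so that $\Comp(\Hilm)\cong\Comp(\Hilm)\otimes\Comp$. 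Since~$\Hilm$ is moreover countably generated and full over the $\sigma$\nb-unital algebra~$B$, the triviality theorem of Mingo and Phillips~\cite{Mingo-Phillips:Triviality}, which is the module-level form of the Brown--Green--Rieffel Theorem, gives $\Hilm\cong B^\infty$.

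The transport of the left action and the bookkeeping with the projection~$p$ are routine; the substantive input is imported wholesale from~\cite{Mingo-Phillips:Triviality}. Within the argument, the delicate point is the final step of the second statement --- that fullness of~$\Hilm$ together with \Cstar{}stability of $\Comp(\Hilm)$ forces~$\Hilm$ to be the standard module --- which is exactly where the Brown--Green--Rieffel circle of ideas, rather than the bare Kasparov Stabilisation Theorem, is needed.
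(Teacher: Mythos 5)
Your argument is correct and follows essentially the same route as the paper: the first statement is the Kasparov Stabilisation Theorem applied to the countably generated module~\(\Hilm\) (your direct verification of countable generation replaces the paper's appeal to ``\(\Hilm\) is countably generated iff \(\Comp(\Hilm)\) is \(\sigma\)\nb-unital,'' but both are fine), and the second reduces to the Mingo--Phillips triviality theorem for full, countably generated Hilbert modules over a \(\sigma\)\nb-unital algebra. The only, immaterial, difference is how \Cstar{}stability of~\(A\) is exploited: the paper cuts \(\Hilm\) down by a rank-one projection \(1\otimes p\) to get~\(\Hilm_0\) with \(\Hilm\cong\Hilm_0^\infty\) and applies the theorem to~\(\Hilm_0\), whereas you transport isometries from \(\Mult(A)\) to \(\Bound(\Hilm)\) to see directly that \(\Hilm\cong\Hilm^\infty\) and apply it to~\(\Hilm\) itself; the two maneuvers are equivalent.
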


\begin{proof}
  First assume only that~\(A\)
  is \(\sigma\)\nb-unital.
  Let \(\varphi_0\colon A\to\Comp(\Hilm)\)
  denote the left action on~\(\Hilm\).
  Since it is nondegenerate, it maps a sequential approximate unit
  in~\(A\)
  to one in~\(\Comp(\Hilm)\).
  The Hilbert module~\(\Hilm\)
  is countably generated if and only if~\(\Comp(\Hilm)\)
  is \(\sigma\)\nb-unital.
  So~\(\Hilm\)
  is countably generated.  Then the Kasparov Stabilisation Theorem
  gives
  a unitary operator \(\Hilm \oplus B^\infty \cong B^\infty\)
  and, in particular, an adjointable isometry
  \(S\colon \Hilm \to B^\infty\).
  Then \(\varphi(a) \defeq S \varphi_0(a) S^* \in \Comp(B^\infty)
  \cong B \otimes \Comp\)
  for \(a\in A\)
  defines a \Star{}homomorphism \(\varphi\colon A\to B\otimes\Comp\).
  By construction, \(\varphi(A)\cdot B^\infty = S(\Hilm)\)
  and \(S\)
  is an isomorphism of \(A,B\)-correspondences
  from~\(\Hilm\) to~\(\Hilm_\varphi\).

  Now assume, in addition, that \(A= A_0\otimes\Comp\)
  is \Cstar{}stable
  and that~\(B\)
  is \(\sigma\)\nb-unital.  The nondegenerate left action of~\(A\)
  on~\(\Hilm\)
  induces an action of \(\Comp\subseteq \Mult(A\otimes\Comp)\).
  Let \(p\in\Comp\)
  be a rank-\(1\)
  projection and \(\Hilm_0\defeq (1\otimes p)\Hilm\).
  The matrix units in~\(\Comp\)
  allow to identify \(\Hilm \cong \Hilm_0^\infty\).
  Since~\(\Hilm\)
  is full, so is~\(\Hilm_0\).
  Since~\(\Hilm\)
  is countably generated, so is~\(\Hilm_0\).
  Since \(B=\Comp(B)\)
  is \(\sigma\)\nb-unital,
  \(B\)
  is a countably generated Hilbert \(B\)\nb-module.
  Hence \cite{Mingo-Phillips:Triviality}*{Theorem~1.9} gives a
  unitary operator
  \(S\colon \Hilm \congto \Hilm_0^\infty \congto B^\infty\).
  Now the \Star{}homomorphism \(\varphi\colon A\to B\otimes \Comp\),
  \(a \mapsto S a S^*\), is nondegenerate,  
  and~\(S\) is an isomorphism from~\(\Hilm\) to~\(\Hilm_\varphi\).
\end{proof}

If \(A=\C\),
then a proper \(\C,B\)-correspondence
is just a Hilbert \(B\)\nb-module~\(\Hilm\)
with \(\id_{\Hilm}\in\Comp(\Hilm)\).
We call such Hilbert \(B\)\nb-modules
\emph{proper}.  Let
\(\Mat_\infty(B) = \bigcup_{n=0}^{\infty} \Mat_n(B)\)
denote the set of infinite matrices over~\(B\)
with finitely many nonzero entries.

\begin{theorem}
  \label{the:prop_CB}
  Let~\(B\)
  be a \Cstar{}algebra.  A Hilbert \(B\)\nb-module~\(\Hilm\)
  is proper if and only if there is a projection
  \(p\in\Mat_\infty(B)\)
  with \(\Hilm\cong p\cdot B^\infty\).
  Let \(p_1,p_2\in\Mat_\infty(B)\)
  be projections.  Unitary operators \(p_1 B^\infty \to p_2 B^\infty\)
  are in bijection with Murray--von Neumann
  equivalences between \(p_1\)
  and~\(p_2\),
  that is, \(u\in \Mat_\infty(B)\)
  with \(u u^* = p_2\) and \(u^* u = p_1\).
\end{theorem}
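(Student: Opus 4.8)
The plan is to prove three statements separately. First, the easy half of the first equivalence: if \(p\in\Mat_\infty(B)\) is a projection, then \(p\in B\otimes\Comp=\Comp(B^\infty)\), so \(p\) acts as a compact (hence adjointable) operator on \(B^\infty\), one has \(\Comp(p\cdot B^\infty)=p\,\Comp(B^\infty)\,p=p\,(B\otimes\Comp)\,p\), and under this identification \(\id_{p\cdot B^\infty}\) is the element~\(p\) itself, which lies in \(p\,(B\otimes\Comp)\,p\); since properness is preserved by isomorphisms of Hilbert modules, this gives the ``if'' direction.

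Second, the bijection in the last sentence.  Given \(u\in\Mat_\infty(B)\subseteq\Comp(B^\infty)\) with \(uu^*=p_2\) and \(u^*u=p_1\), left multiplication \(\xi\mapsto u\xi\) maps \(p_1B^\infty\) into \(p_2B^\infty\) (because \(up_1=uu^*u=p_2u\)), is isometric on \(p_1B^\infty\), and has \(\xi\mapsto u^*\xi\) as inverse there; so it restricts to a unitary \(p_1B^\infty\to p_2B^\infty\), the two relations being exactly what this requires.  Conversely, a unitary \(V\colon p_1B^\infty\to p_2B^\infty\) extends by~\(0\) on \((1-p_1)B^\infty\) to an adjointable operator \(\tilde V\) on \(B^\infty\) with \(\tilde V^*\tilde V=p_1\) and \(\tilde V\tilde V^*=p_2\), hence \(\tilde V=p_2\tilde Vp_1\); since \(p_1,p_2\) are supported in finite corners of \(B\otimes\Comp\), so is \(p_2\tilde Vp_1\), which therefore lies in \(\Mat_\infty(B)\).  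The identity \(u=uu^*u=p_2up_1\) shows that \(u\) is recovered from the operator it induces, giving injectivity, and the two constructions are mutually inverse.  This part is bookkeeping with Hilbert-module operators; the only point to watch is the return from \(\Comp(B^\infty)\) to \(\Mat_\infty(B)\), handled by the finite-corner observation.

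Third, the hard half: every proper Hilbert \(B\)\nb-module~\(\Hilm\) is isomorphic to some \(p\cdot B^\infty\).  Since \(\id_\Hilm\in\Comp(\Hilm)\), the \Cstar{}algebra \(\Comp(\Hilm)\) is unital, hence \(\sigma\)\nb-unital, so \(\Hilm\) is countably generated by the same fact used in the proof of Proposition~\ref{pro:proper_corr_to_homomorphism}.  The Kasparov Stabilisation Theorem then yields an adjointable isometry \(S\colon\Hilm\to B^\infty\); put \(q\defeq SS^*\).  As \(\id_\Hilm\) is compact, \(q=S\,\id_\Hilm\,S^*\) lies in \(\Comp(B^\infty)=B\otimes\Comp\) and is a projection there, and \(S\) is a Hilbert-module unitary \(\Hilm\congto qB^\infty\).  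Because \(\Mat_\infty(B)\) is dense in \(B\otimes\Comp\), pick a self-adjoint \(a\in\Mat_\infty(B)\) so close to~\(q\) that \(\sigma(a)\) is concentrated near \(\{0,1\}\); then \(p\defeq\chi_{[1/2,\infty)}(a)\) is a projection in \(\Mat_\infty(B)\) with \(\norm{p-q}<1\).  By the standard lemma on close projections --- with \(v\defeq qp+(1-q)(1-p)\), the element \(v^*v=vv^*=1-(p-q)^2\) is invertible in the unitisation of \(B\otimes\Comp\), \(u\defeq v(v^*v)^{-1/2}\) is a unitary there with \(up=qu\), and \(w\defeq up\in B\otimes\Comp\) satisfies \(w^*w=p\) and \(ww^*=q\) --- the operator \(\xi\mapsto w^*\xi\) is a unitary \(qB^\infty\congto pB^\infty\) (by the same computation as in the second part).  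Composing it with \(S\) gives \(\Hilm\cong p\cdot B^\infty\).

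I expect the last step to be the main obstacle: producing the projection \(p\in\Mat_\infty(B)\) together with the equivalence implementing \(p\sim q\).  Everything else is either a citation (Kasparov stabilisation, the lemma on close projections) or a routine Hilbert-module computation.  It is also worth checking that the passage from ``\(\Comp(\Hilm)\) unital'' to ``\(\Hilm\) countably generated'' is legitimate --- it is, because a unital \Cstar{}algebra is trivially \(\sigma\)\nb-unital, so one may apply the characterisation of countable generation recalled earlier.
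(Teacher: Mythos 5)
Your proof is correct, but the existence half (every proper module is isomorphic to some \(p\cdot B^\infty\)) follows a genuinely different route from the paper's. The paper stays entirely inside a finite matrix algebra: it takes a finite-rank approximation \(\sum_{j=1}^m \ket{x_j}\bra{y_j}\) of \(\id_{\Hilm}\) within distance \(<1\), inverts it in \(\Comp(\Hilm)\), and uses the resulting adjointable maps between \(\Hilm\) and \(B^m\) together with a polar decomposition to produce an isometry \(V\colon \Hilm\to B^m\) whose range projection \(VV^*\) lies in \(\Comp(B^m)=\Mat_m(B)\) on the nose --- no Kasparov stabilisation and no perturbation of projections is needed. You instead stabilise into \(B^\infty\), obtain a range projection \(q\) only in \(\Comp(B^\infty)=B\otimes\Comp\), and then must do extra work (density of \(\Mat_\infty(B)\), functional calculus, the close-projections lemma) to replace \(q\) by a Murray--von Neumann equivalent projection in \(\Mat_\infty(B)\). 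Both arguments are sound; the paper's is more economical and makes the finiteness of \(p\) automatic, while yours reuses machinery (Kasparov stabilisation and the equivalence of countable generation with \(\sigma\)\nb-unitality of \(\Comp(\Hilm)\)) that is already present in the proof of Proposition~\ref{pro:proper_corr_to_homomorphism}, so each step is a citation or a routine computation. A small point in your favour: you spell out the bijection between unitary operators \(p_1 B^\infty\to p_2 B^\infty\) and Murray--von Neumann equivalences, including why the extension \(\tilde V=p_2\tilde Vp_1\) lands back in \(\Mat_\infty(B)\), whereas the paper's written proof only treats the first assertion and leaves this part to the cited references.
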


\begin{proof}
  This follows from well known results if~\(B\) is unital (see
  \cite{Wegge-Olsen:K-theory}*{Theorem~15.4.2}).  The statements
  that we claim still hold if~\(B\) is nonunital (see also
  \cite{Banerjee-Meyer:Coarse}*{Proposition~4.2}).  We include a
  proof here for the convenience of the reader.

  First assume that \(\Hilm\cong p B^m\)
  for a projection \(p\in\Mat_m(B)\).
  We let \(V\colon \Hilm\to B^m\)
  be the resulting isometric embedding.  It is adjointable with the
  adjoint \(B^m\to \Hilm\),
  \(x\mapsto p\cdot x\).
  The compositions \(V^*V\)
  and~\(VV^*\)
  are the identity map on~\(\Hilm\)
  and left multiplication by~\(p\),
  respectively.  The latter is a compact operator on~\(B^m\)
  because \(\Comp(B^m)=\Mat_m(B)\).
  Since~\(VV^*\)
  is compact, so is \(V=VV^*V\) and hence also \(V^*V=\id_{\Hilm}\).

  Conversely, assume that~\(\id_{\Hilm}\)
  is compact.  Then there is a finite-rank approximation
  \(\sum_{j=1}^m {}\ket{x_j}\bra{y_j}\) with, say,
  \[
  \norm[\bigg]{\sum_{j=1}^m {}\ket{x_j}\bra{y_j} -\id_{\Hilm}} <1.
  \]
  Then \(\sum_{j=1}^m {}\ket{x_j}\bra{y_j}\)
  is invertible in \(\Comp(\Hilm)\);
  let \(s\in \Comp(\Hilm)\) be its inverse.  Let
  \[
  y\defeq (\bra{y_j})_{j=1,\dotsc ,m}\colon \Hilm\to B^m,\qquad
  x\defeq (\ket{x_j})_{j=1,\dotsc,m}\colon B^m\to \Hilm;
  \]
  these are adjointable maps with \((s\circ x)\circ y=\id_{\Hilm}\).
  Then \(e = y\circ (s\circ x)\colon B^m\to B^m\)
  is idempotent, and adjointable as a composition of adjointable
  operators.  Hence its range is complementable.  The operators
  \(e\) and~\(y\) have the same range and~\(y^*\) is surjective
  because \((s\circ x)\circ y=\id_{\Hilm}\).
  As a consequence, the operator~\(y\)
  has a polar decomposition.  This gives an isometry
  \(V\colon \Hilm\to B^m\).
  Since~\(\id_{\Hilm}\)
  is compact, so is \(V=V\circ \id_{\Hilm}\)
  and hence~\(VV^*\).
  Thus~\(VV^*\)
  is left multiplication by some \(p\in \Mat_m(B)\).
  This is a projection whose image is isomorphic to~\(\Hilm\)
  via~\(V\).
\end{proof}

Thus \(\prop(B) \defeq \prop(\C,B)\)
is canonically isomorphic to the monoid of Murray--von Neumann
equivalence classes of projections in \(\Mat_\infty(B)\)
with the usual operation~\(\oplus\), even if~\(B\) is not unital.

Proper correspondences may be composed by tensor product.  This
descends to a category structure on isomorphism classes of proper
correspondences.  In particular, there is a canonical composition
\(\prop(A) \times \prop(A,B) \to \prop(B)\).
It is additive in both variables.  Equivalently, a proper
\(A,B\)-correspondence~\(\Hilm\)
induces a monoid homomorphism \(\Hilm_*\colon \prop(A)\to\prop(B)\),
\([\Hilm[M]] \mapsto [\Hilm[M] \otimes_A \Hilm]\),
and these homomorphisms satisfy
\((\Hilm_1)_* + (\Hilm_2)_* = (\Hilm_1\oplus\Hilm_2)_*\).

\medskip

Next, we define ``unitary equivalence'' of \Star{}homomorphisms
\(A\to B\otimes\Comp\) in such a way that it corresponds exactly to
unitary equivalence of the corresponding proper
\(A,B\)-correspondences.  Let
\(R_\varphi \defeq \varphi(A)\cdot B\otimes\Comp\).  This is a right
ideal.  We also view it as a Hilbert module over \(B\otimes\Comp\).

\begin{definition}
  Let \(\varphi_0,\varphi_1\colon A\rightrightarrows B\otimes\Comp\)
  be two \Star{}homomorphisms.  A \emph{unitary equivalence}
  from~\(\varphi_0\)
  to~\(\varphi_1\)
  is a unitary operator
  \(u\colon R_{\varphi_0} \congto R_{\varphi_1}\)
  with \(u\varphi_0(a)u^* = \varphi_1(a)\)
  for all \(a\in A\),
  as operators on the Hilbert \(B\)\nb-module~\(R_{\varphi_1}\).
\end{definition}

A unitary multiplier~\(\bar{u}\)
of~\(B\otimes\Comp\)
with \(u\varphi_0(a)u^* = \varphi_1(a)\)
in \(B\otimes\Comp\)
for all \(a\in A\)
restricts to a unitary operator
\(u\colon R_{\varphi_0} \congto R_{\varphi_1}\)
that is a unitary equivalence from~\(\varphi_0\)
to~\(\varphi_1\).
Thus we allow more unitary equivalences than usual.

The argument that identifies the multiplier algebra of
\(\Comp(\Hilm)\)
for a Hilbert \(B\)\nb-module~\(\Hilm\)
with~\(\Bound(\Hilm)\)
gives a bijection between unitary equivalences
\(\varphi_1 \to \varphi_2\)
and isomorphisms of correspondences
\(\Hilm_{\varphi_1} \to \Hilm_{\varphi_2}\).
Thus isomorphism classes of proper \(A,B\)-correspondences
are in bijection with unitary equivalence classes of
\Star{}homomorphisms \(A\to B\otimes\Comp\).
And isomorphism classes of proper, full \(A\otimes\Comp,B\)-correspondences
are in bijection with unitary equivalence classes of nondegenerate
\Star{}homomorphisms \(A\otimes\Comp\to B\otimes\Comp\),
for the standard notion of unitary equivalence.

A \Star{}homomorphism \(\varphi\colon A\to B\otimes\Comp\)
induces a map
\(\Ideals(\varphi)\colon \Ideals(A)\to\Ideals(B\otimes\Comp)\cong
\Ideals(B)\)
by mapping \(I\idealin A\)
to the ideal in~\(B\otimes\Comp\)
generated by \(\varphi(I)\subseteq B\otimes\Comp\).
A routine computation identifies this map with the map
\(\Ideals(\Hilm_\varphi)\) defined above.

For \Cstar{}algebra
classification, the equivalence relation of unitary equivalence for
\Star{}homomorphisms is too fine, even with our mild modification.  We
should use asymptotic or approximate unitary equivalence or
homotopy.  A \emph{homotopy}
from~\(\varphi_0\)
to~\(\varphi_1\)
is a \Star{}homomorphism
\(\Phi\colon A\to \Cont([0,1],B\otimes\Comp)\)
with \(\ev_t \circ \Phi = \varphi_t\)
for \(t=0,1\).
An \emph{asymptotic unitary equivalence} from~\(\varphi_0\)
to~\(\varphi_1\)
is a family of unitary operators
\(u_t\colon R_{\varphi_0} \congto R_{\varphi_1}\)
for \(t\in[0,\infty)\)
that is \Star{}strongly continuous in the sense that
\(t\mapsto u_t(x) \in R_{\varphi_1}\) and
\(t\mapsto u_t^*(y) \in R_{\varphi_0}\)
are norm-continuous for \(x\in R_{\varphi_0}\), \(y\in
R_{\varphi_1}\) and that satisfies
\[
\lim_{t\to\infty} u_t\varphi_0(a)u_t^* = \varphi_1(a)
\]
for all \(a\in A\),
as operators on the Hilbert \(B\)\nb-module~\(R_{\varphi_1}\).
An \emph{approximate unitary equivalence} from~\(\varphi_0\)
to~\(\varphi_1\)
is a family of unitary operators
\(u_n\colon R_{\varphi_0} \congto R_{\varphi_1}\)
for \(n\in\N\) that satisfies
\[
\lim_{n\to\infty} u_n\varphi_0(a)u_n^* = \varphi_1(a)
\]
for all \(a\in A\).
For proper \(A,B\)-correspondences, these three notions correspond
to the following:

\begin{definition}
  A \emph{homotopy} from~\(\Hilm_0\)
  to~\(\Hilm_1\)
  is an \(A,\Cont([0,1],B)\)-correspondence~\(\Hilm\)
  together with isomorphisms
  \(\Hilm\otimes_{\ev_t} B \cong \Hilm_t\) for \(t=0,1\).
  An \emph{asymptotic isomorphism} from~\(\Hilm_0\)
  to~\(\Hilm_1\)
  is a \Star{}strongly continuous family of unitary
  operators~\((u_t)_{t\in[0,\infty)}\)
  between the underlying Hilbert \(B\)\nb-modules that satisfies
  \[
  \lim_{t\to\infty} u_t (a\cdot_0 u_t^* x) = a \cdot_1 x
  \]
  for all \(a\in A\),
  \(x\in\Hilm_1\);
  here \(\cdot_i\)
  denotes the left action of~\(A\)
  on~\(\Hilm_i\).
  An \emph{approximate isomorphism} is defined similarly, replacing the
  indexing set \([0,\infty)\) by~\(\N\).
\end{definition}

\section{Correspondences out of a graph \texorpdfstring{$C^*$}{C*}-algebra}
\label{sec:corr_out_of_graph}

Let \(G = (\rg,\s\colon E\rightrightarrows V)\)
be a directed graph with sets of vertices and edges \(V\)
and~\(E\),
respectively, and range and source maps \(\rg\)
and~\(\s\).
The \emph{graph \Cstar{}algebra of~\(G\)}
is the universal \Cstar{}algebra \(\Cst(G)\)
generated by mutually orthogonal projections~\(p_v\)
for \(v\in V\)
and partial isometries~\(t_e\) for \(e\in E\) subject to the relations
\begin{enumerate}[label=(CK\arabic*)]
\item \label{en:CK1} \(t_e^* t_e = p_{\s(e)}\) for all \(e\in E\);
\item \label{en:CK3} \(\sum_{\rg(e)=v} t_e t_e^* = p_v\) for all \(v\in V\)
  with \(0<\abs{\rg^{-1}(v)}<\infty\);
\item \label{en:CK2} \(t_e t_e^* \le p_{\rg(e)}\) for all \(e\in E\)
  and \(t_e^* t_f = 0\) for all \(e,f\in E\) with \(e\neq f\).
\end{enumerate}
We call \(v\in V\) \emph{regular} if \(0<\abs{\rg^{-1}(v)}<\infty\);
let \(V_\reg\subseteq V\) be the subset of regular vertices.  The
graph~\(G\) is \emph{regular} if all vertices are regular.  Then the
relation~\ref{en:CK3} is imposed for all vertices.  More generally,
the graph~\(G\) is \emph{row-finite} if \(\abs{\rg^{-1}(v)}<\infty\)
for all \(v\in V\).  Then the relation~\ref{en:CK2} is redundant.
Any graph \Cstar{}algebra~\(\Cst(G)\) is stably isomorphic
to~\(\Cst(G')\) for a regular graph~\(G'\), which is constructed
in~\cite{Muhly-Tomforde:Adding_tails} by ``adding tails'' to~\(G\).
Since we only care about graph \Cstar{}algebras up to stable
isomorphism, we may as well assume~\(G\) to be regular.  Actually,
our methods work equally well for row-finite graphs.  We assume all
graphs in the following to be row-finite, sometimes tacitly.

\begin{theorem}[\cite{Ara-Moreno-Pardo:Nonstable_K_for_Graph_Algs}*{Theorems
    3.5 and~7.1}]
  \label{the:prop_graph}
  Let~\(G\) be a row-finite graph.  Each projection
  \(q\in \Mat_\infty(\Cst(G))\) is equivalent to a finite sum
  \(\sum_{v\in V} c_v p_v\) of vertex projections with
  \((c_v)_{v\in V}\in \N[V]\), that is, \(c_v\in \N\) for all
  \(v\in V\) and \(c_v\neq0\) only for finitely many \(v\in V\).
\end{theorem}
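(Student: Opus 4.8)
The plan is to separate the statement into a purely algebraic core and an approximation step, following \cite{Ara-Moreno-Pardo:Nonstable_K_for_Graph_Algs}. Let \(A_0\subseteq\Cst(G)\) be the \Star{}subalgebra spanned by the monomials \(t_\mu t_\nu^*\), where \(\mu,\nu\) range over the finite paths in~\(G\) with \(\s(\mu)=\s(\nu)\); it is dense in~\(\Cst(G)\), contains every~\(p_v\), and the projections \(p_F\defeq\sum_{v\in F}p_v\) for finite \(F\subseteq V\) form an increasing approximate unit for both~\(A_0\) and~\(\Cst(G)\). I would prove two assertions: \textup{(a)}~every idempotent in \(\Mat_\infty(A_0)\) is algebraically equivalent to a ``diagonal'' idempotent \(\sum_{v\in V}c_v p_v\) with \((c_v)\in\N[V]\); and \textup{(b)}~every projection in \(\Mat_\infty(\Cst(G))\) is Murray--von Neumann equivalent to the range projection of an idempotent in \(\Mat_\infty(A_0)\). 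Granting both, a projection \(q\in\Mat_\infty(\Cst(G))\) is Murray--von Neumann equivalent to the range projection of some \(e\in\Mat_\infty(A_0)\); the equivalence of~\(e\) with \(\sum_v c_v p_v\) provided by~\textup{(a)} is in particular an equivalence over~\(\Cst(G)\), so it identifies the Murray--von Neumann classes of the corresponding range projections, and \(\sum_v c_v p_v\) is its own range projection. Hence \(q\) is Murray--von Neumann equivalent to \(\sum_v c_v p_v\).

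For~\textup{(a)} one exploits relation~\ref{en:CK3}: at a regular vertex~\(v\) it gives \(p_v=\sum_{\rg(e)=v}t_e t_e^*\) with \(t_e t_e^*\sim t_e^* t_e=p_{\s(e)}\), so \(p_v\sim\sum_{\rg(e)=v}p_{\s(e)}\), and iterating along paths one gets \(p_v\sim\sum_{\abs{\mu}=\ell,\ \rg(\mu)=v}p_{\s(\mu)}\) for every length~\(\ell\). Given an idempotent \(e\in\Mat_\infty(A_0)\), only finitely many paths occur in its entries, so \(e\) already lies in a unital corner \(p_F\Mat_n(A_0)p_F\) with \(F\subseteq V\) finite and \(n\in\N\), and there one successively conjugates~\(e\), using the partial isometries~\(t_\mu\), so as to push its ``support'' along the edges of the subgraph until it becomes diagonal: at a vertex~\(w\) with \(\rg^{-1}(w)=\emptyset\) one has \(p_w A_0 p_w=\C p_w\) and the unfolding stops there, while on the recurrent part of the graph one normalises using the Leavitt-type identities obtained by iterating~\ref{en:CK3} around cycles. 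Making this process terminate in the presence of cycles is, I expect, the main obstacle; it is the substance of \cite{Ara-Moreno-Pardo:Nonstable_K_for_Graph_Algs}*{Theorem~3.5}, which moreover identifies the resulting monoid with the graph monoid---the abelian monoid on generators~\(V\) subject to \(v=\sum_{\rg(e)=v}\s(e)\) for \(v\in V_\reg\)---which is more than we need here.

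Assertion~\textup{(b)} says precisely that the natural monoid homomorphism \(\mathcal V(A_0)\to\prop(\Cst(G))\) sending an idempotent to the Murray--von Neumann class of its range projection is surjective, and this is \cite{Ara-Moreno-Pardo:Nonstable_K_for_Graph_Algs}*{Theorem~7.1}. The idea: given a projection \(q\in\Mat_n(\Cst(G))\), density of \(\Mat_n(A_0)\) yields a self-adjoint \(a\in\Mat_n(A_0)\) with \(\norm{a-q}\) arbitrarily small, so \(\mathrm{sp}(a)\) avoids \([1/3,2/3]\) and, for any continuous~\(f\) that is~\(0\) near~\(0\) and~\(1\) near~\(1\), the element \(q'\defeq f(a)\) is a projection with \(\norm{q'-q}<1\), hence \(q'\sim q\). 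The point needing care is that \(q'\) lies a priori only in~\(\Cst(G)\) and not in~\(A_0\), since \(A_0\) is not inverse-closed in~\(\Cst(G)\); one deals with this by localizing, using the approximate unit~\((p_F)_F\) to reduce to projections in the finite corners \(p_F\Mat_n(\Cst(G))p_F\) and comparing these with \(p_F\Mat_n(A_0)p_F\). Combined with~\textup{(a)}, this proves the theorem.
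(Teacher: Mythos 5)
The paper offers no proof of this statement: it is quoted directly from Ara--Moreno--Pardo, and your decomposition into an algebraic step~(a) and a comparison step~(b) reproduces exactly the division of labour between the two cited results --- (a) is their Theorem~3.5 on the monoid of idempotents of the Leavitt path algebra \(A_0\), and (b) is their Theorem~7.1 identifying that monoid with \(\prop(\Cst(G))\). The glue you supply is correct: algebraic equivalence of idempotents over a \Cstar{}algebra passes to Murray--von Neumann equivalence of the associated range projections, and \(\sum_v c_v p_v\) is already a projection, so (a) and (b) together give the theorem.

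The caveat is that your sketches of (a) and (b), read as proofs rather than as pointers to the citations, both stop at exactly the hard point. For~(b), the functional-calculus argument does not close up as written: \(f(a)\) lies in \(\Mat_n(\Cst(G))\) but there is no reason for it to lie in \(\Mat_n(A_0)\), and passing to the corners \(p_F\Mat_n(\Cst(G))p_F\) does not repair this, because \(A_0\) is not inverse-closed in \(\Cst(G)\) even locally --- already for a single loop one has \(A_0\cong\C[z,z^{-1}]\subseteq\Cont(\T)\), dense but not spectrally invariant. Ara--Moreno--Pardo instead prove Theorem~7.1 by writing~\(G\) as a direct limit of finite complete subgraphs and comparing the two monoids at each finite stage. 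Likewise, the termination of your diagonalisation in~(a) in the presence of cycles is not obtained by conjugation alone; their Theorem~3.5 rests on Bergman's machinery for universal localisations. Since you explicitly attribute both steps to the cited theorems, these are acknowledged reliances rather than errors, but the proposal is not a self-contained proof and should not be presented as one.
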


\begin{theorem}
  \label{the:lift_prop_CB}
  Let~\(G\) be a row-finite graph and~\(B\) a \Cstar{}algebra.
  Proper \(\Cst(G),B\)-correspondences are equivalent to
  families \(\bigl((\Hilm_v)_{v\in V},(U_v)_{v\in V_\reg}\bigr)\),
  where each~\(\Hilm_v\)
  is a proper Hilbert \(B\)\nb-module and~\(U_v\)
  is a unitary operator
  \(\bigoplus_{e\in E^v} \Hilm_{\s(e)} \congto \Hilm_v\).
  Here ``equivalent'' means an equivalence of categories, where
  arrows from \(\bigl((\Hilm_{1,v}),(U_{1,v})\bigr)\)
  to \(\bigl((\Hilm_{2,v}),(U_{2,v})\bigr)\)
  are families of unitaries
  \(W_v\colon \Hilm_{1,v} \congto \Hilm_{2,v}\) for \(v\in V\)
  that make the following diagrams commute for all \(v\in V_\reg\):
  \begin{equation}
    \label{eq:isom_corr_HU}
    \begin{tikzpicture}[baseline=(current bounding box.west)]
      \matrix (m) [cd,column sep=4em] {
        \bigoplus_{e\in E^v} \Hilm_{1,\s(e)} &
        \Hilm_{1,v}\\
        \bigoplus_{e\in E^v} \Hilm_{2,\s(e)} &
        \Hilm_{2,v}\\
      };
      \draw[cdar] (m-1-1) -- node {\(U_{1,v}\)} (m-1-2);
      \draw[cdar] (m-2-1) -- node {\(U_{2,v}\)} (m-2-2);
      \draw[cdar] (m-1-1) -- node[swap]
      {\(\bigoplus_{e\in E^v} W_{\s(e)}\)} (m-2-1);
      \draw[cdar] (m-1-2) -- node {\(W_v\)} (m-2-2);
    \end{tikzpicture}
  \end{equation}
  For any monoid homomorphism
  \(\varphi\colon \prop(\Cst(G))\to\prop(B)\),
  there is a proper
  \(\Cst(G),B\)-correspondence~\(\Hilm\) with \(\Hilm_*=\varphi\).
\end{theorem}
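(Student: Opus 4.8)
The plan is to read the correspondence off the family description in the first part of the theorem and then check that the induced map agrees with~\(\varphi\) on the vertex projections, which is enough because these generate \(\prop(\Cst(G))\).

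\emph{Step 1: the monoid \(\prop(\Cst(G))\).} By Theorem~\ref{the:prop_graph}, every class in \(\prop(\Cst(G))\) is a finite \(\N\)\nb-linear combination of the classes \([p_v]\), \(v\in V\); in particular these classes generate the monoid. For \(v\in V_\reg\), the relations \ref{en:CK1} and~\ref{en:CK3} exhibit \(p_v\) as the sum of the pairwise orthogonal projections \(t_et_e^*\), \(e\in E^v\), and \(t_et_e^*\) is Murray--von Neumann equivalent to \(t_e^*t_e=p_{\s(e)}\) via~\(t_e\); hence \([p_v]=\sum_{e\in E^v}[p_{\s(e)}]\) in \(\prop(\Cst(G))\) for every regular vertex~\(v\).

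\emph{Step 2: building the family.} For each \(v\in V\), use Theorem~\ref{the:prop_CB} together with the identification of \(\prop(B)\) with the monoid of Murray--von Neumann classes of projections in \(\Mat_\infty(B)\) to pick a proper Hilbert \(B\)\nb-module~\(\Hilm_v\) with \([\Hilm_v]=\varphi([p_v])\) in \(\prop(B)\). For \(v\in V_\reg\), the finite direct sum \(\bigoplus_{e\in E^v}\Hilm_{\s(e)}\) is again a proper Hilbert \(B\)\nb-module, and by Step~1 and additivity of~\(\varphi\) its class equals \(\sum_{e\in E^v}\varphi([p_{\s(e)}])=\varphi\bigl(\sum_{e\in E^v}[p_{\s(e)}]\bigr)=\varphi([p_v])=[\Hilm_v]\). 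Thus \(\bigoplus_{e\in E^v}\Hilm_{\s(e)}\) and~\(\Hilm_v\) are Murray--von Neumann equivalent, so Theorem~\ref{the:prop_CB} provides a unitary operator \(U_v\colon \bigoplus_{e\in E^v}\Hilm_{\s(e)}\congto\Hilm_v\); fix one for each \(v\in V_\reg\). Feeding the family \(\bigl((\Hilm_v)_{v\in V},(U_v)_{v\in V_\reg}\bigr)\) into the equivalence of categories from the first part of the theorem produces a proper \(\Cst(G),B\)\nb-correspondence~\(\Hilm\).

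\emph{Step 3: \(\Hilm_*=\varphi\).} Both maps are monoid homomorphisms \(\prop(\Cst(G))\to\prop(B)\), so by Step~1 it suffices to show \(\Hilm_*([p_v])=\varphi([p_v])\) for all \(v\in V\). The class \([p_v]\in\prop(\Cst(G))=\prop(\C,\Cst(G))\) is represented by the proper Hilbert \(\Cst(G)\)\nb-module \(p_v\Cst(G)\), and the map \(p_v\Cst(G)\otimes_{\Cst(G)}\Hilm\to p_v\cdot\Hilm\), \(p_va\otimes\xi\mapsto (p_va)\cdot\xi\), is a unitary of Hilbert \(B\)\nb-modules (it is isometric by the defining inner product of the interior tensor product and surjective because the left action of \(\Cst(G)\) on~\(\Hilm\) is nondegenerate). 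Tracing through the construction of the equivalence of categories shows \(p_v\cdot\Hilm\cong\Hilm_v\). Hence \(\Hilm_*([p_v])=[p_v\cdot\Hilm]=[\Hilm_v]=\varphi([p_v])\), and we are done. The only real obstacle is the last identification in Step~3: one must extract from the proof of the equivalence of categories that the component attached to~\(\Hilm\) at a vertex~\(v\) is canonically \(p_v\cdot\Hilm\), so that the round trip ``family \(\mapsto\) correspondence \(\mapsto\) class at~\(v\)'' returns~\([\Hilm_v]\); everything else is a direct application of Theorems~\ref{the:prop_graph} and~\ref{the:prop_CB} and the formal properties of~\(\oplus\) and of the composition of correspondences.
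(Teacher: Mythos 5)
Your argument for the lifting statement is correct and is essentially the paper's own: you use Theorem~\ref{the:prop_graph} to see that the classes \([p_v]\) generate \(\prop(\Cst(G))\), the Cuntz--Krieger relations to get \([p_v]=\sum_{e\in E^v}[p_{\s(e)}]\) for regular~\(v\) (the paper packages the same computation into the row matrix \(u_v=(t_e)_{e\in E^v}\), a partial isometry with \(u_vu_v^*=p_v\) and \(u_v^*u_v=\bigoplus_{e\in E^v}p_{\s(e)}\)), additivity of~\(\varphi\) plus Theorem~\ref{the:prop_CB} to produce the unitaries~\(U_v\), and the generation statement again to reduce \(\Hilm_*=\varphi\) to the check on vertex projections. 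Your Step~3, including the identification \(p_v\Cst(G)\otimes_{\Cst(G)}\Hilm\cong p_v\cdot\Hilm\), is fine.

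The gap is that you prove only the last sentence of the theorem. The first assertion --- that proper \(\Cst(G),B\)-correspondences form a category \emph{equivalent} to the category of families \(\bigl((\Hilm_v),(U_v)\bigr)\) with the morphisms~\eqref{eq:isom_corr_HU} --- is itself a substantive claim, and your Step~2 invokes it (``feeding the family \ldots{} into the equivalence of categories'') rather than establishing it. What is missing is the construction of the two functors and the verification that they are mutually inverse up to natural isomorphism: from a correspondence~\(\Hilm\) one sets \(\Hilm_v\defeq p_v\cdot\Hilm\) (proper because \(p_v\in\Comp(\Hilm)\)) and lets~\(U_v\) be the unitary by which~\(u_v\) acts; conversely, from a family one forms \(\Hilm\defeq\bigoplus_{v\in V}\Hilm_v\), defines \(P_v\) as the projection onto~\(\Hilm_v\) and \(T_e\) as the restriction of~\(U_{\rg(e)}\) to the summand~\(\Hilm_{\s(e)}\), checks that these satisfy \ref{en:CK1} and~\ref{en:CK3} (which suffices since~\(G\) is row-finite, so \ref{en:CK2} is redundant), and invokes the universal property of~\(\Cst(G)\) to get a \Star{}homomorphism \(\Cst(G)\to\Comp(\Hilm)\), which one must still verify to be nondegenerate. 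One also has to match morphisms on both sides, i.e., that unitaries of correspondences restrict to families~\((W_v)\) making~\eqref{eq:isom_corr_HU} commute and vice versa. Without this, your construction in Step~2 does not yet produce a correspondence, and the identification \(p_v\cdot\Hilm\cong\Hilm_v\) that your Step~3 explicitly flags as ``the only real obstacle'' has no proof to be extracted from.
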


\begin{proof}
  Fix \(v\in V_\reg\).
  Let \(E^v \defeq \{e\in E\mid \rg(e)=v\}\)
  and let \(m_v = \abs{E^v}\)
  be the number of edges with range~\(v\).
  Totally order these edges, and let
  \(u_v\in \Mat_{1,m_v}(\Cst(G))\)
  be the row matrix with entries~\(t_e\)
  for \(e\in E^v\).
  The relations \ref{en:CK1} and~\ref{en:CK3} say that
  \(u_v u_v^* = \sum_{e\in E^v} t_e t_e^* = p_v\)
  and
  \(u_v^* u_v = (t_e^* t_f)_{e,f\in E^v} = (\delta_{e,f}
  p_{\s(e)})_{e,f\in E^v}\),
  the orthogonal direct sum \(\bigoplus_{e\in E^v} p_{\s(e)}\)
  of the projections~\(p_{\s(e)}\)
  for \(e\in E^v\).
  That is, \(u_v\)
  is a partial isometry with range and source projections \(p_v\)
  and \(\bigoplus_{e\in E^v} p_{\s(e)}\).  Since any vertex in
  \(\rg(E)\) is regular, the unitaries~\(u_v\) for \(v\in V_\reg\)
  contain the partial isometries~\(t_e\) for all edges.

  Now we prove that a proper \(\Cst(G),B\)-correspondence~\(\Hilm\)
  is equivalent to a family \(\bigl((\Hilm_v)_{v\in V},(U_v)_{v\in
    V_\reg}\bigr)\).
  Let \(\Hilm_v \defeq p_v\cdot\Hilm\subseteq \Hilm\).
  These are proper Hilbert \(B\)\nb-modules
  because \(p_v\in\Comp(\Hilm)\)
  for a proper \(\Cst(G),B\)-correspondence.
  The partial isometry~\(u_v\)
  for \(v\in V_\reg\)
  acts by a unitary operator
  \(U_v\colon \bigoplus_{e\in E^v} \Hilm_{\s(e)} \congto \Hilm_v\).
  This maps~\(\Hilm\)
  to a family \(((\Hilm_v),(U_v))\).
  Let \(\Hilm_1\) and~\(\Hilm_2\) be proper
  \(\Cst(G),B\)-correspondences and let
  \(W\colon \Hilm_1 \to\Hilm_2\) be an isomorphism.  This restricts
  to isomorphisms of Hilbert \(B\)\nb-modules \(W_v\colon
  \Hilm_{1,v} \to \Hilm_{2,v}\) because~\(W\) intertwines the left
  action of the projections~\(p_v\).  The
  diagrams~\eqref{eq:isom_corr_HU} commute because~\(W\) intertwines
  the left action of the generators~\(t_e\) for \(e\in E\) and hence
  of the matrices~\(u_v\) for \(v\in V_\reg\).

  Conversely, let~\(\Hilm_v\)
  for \(v\in V\)
  be proper Hilbert \(B\)\nb-modules
  and let
  \[
    U_v\colon \bigoplus_{e\in E^v} \Hilm_{\s(e)} \congto \Hilm_v
  \]
  be unitaries for \(v\in V_\reg\).
  Let \(\Hilm \defeq \bigoplus_{v\in V} \Hilm_v\).
  Let \(P_v\in\Comp(\Hilm)\)
  be the projection onto the summand~\(\Hilm_v\).
  Let~\(T_e\)
  for \(e\in E^v\)
  be the restriction of~\(U_v\) to the summand~\(\Hilm_{\s(e)}\).
  The operators \(P_v\)
  and~\(T_e\)
  on~\(\Hilm\)
  satisfy the Cuntz--Krieger relations \ref{en:CK1} and~\ref{en:CK3}.
  So there is a unique \Star{}homomorphism \(\Cst(G)\to\Comp(\Hilm)\)
  mapping~\(p_v\) to~\(P_v\) and~\(t_e\) to~\(T_e\).
  This is nondegenerate because it maps
  \(\bigl(\bigoplus_{v\in F} p_v\bigr)_F\)
  to an approximate unit in~\(\Comp(\Hilm)\).
  Thus the family \(((\Hilm_v),(U_v))\)
  comes from a proper \(\Cst(G),B\)-correspondence.  Let
  \(((\Hilm_{1,v}),(U_{1,v}))\)
  and \(((\Hilm_{2,v}),(U_{2,v}))\)
  be two such families and let~\((W_v)_{v\in V}\) be a family of
  unitaries \(W_v\colon \Hilm_{1,v} \congto \Hilm_{2,v}\)
  making the diagrams~\eqref{eq:isom_corr_HU} commute for all
  \(v\in V_\reg\).
  Then \(W\defeq \bigoplus_{v\in V} W_v\) is a unitary
  \(\bigoplus_{v\in V} \Hilm_{1,v} \to \bigoplus_{v\in V}
  \Hilm_{2,v}\), which intertwines the left actions of the
  generators~\(p_v\) for \(v\in V\) and~\(t_e\) for \(e\in E\).

  The functors from the category of proper
  \(\Cst(G),B\)-correspondences
  to the category of families \(((\Hilm_v),(U_v))\) and back
  are easily seen to be inverse to each other up to natural
  isomorphism.  That is, they form an equivalence of categories.

  Now let \(\varphi\colon \prop(\Cst(G))\to\prop(B)\) be
  a homomorphism.  We construct a family \(((\Hilm_v),(U_v))\)
  from it.  For \(v\in V\),
  let~\(\Hilm_v\)
  be a proper Hilbert \(B\)\nb-module
  that represents \(\varphi[p_v]\in \prop(B)\).
  Let \(v\in V_\reg\).  The equivalence~\(u_v\)
  in \(\Mat_\infty(\Cst(G))\)
  implies that \(\sum_{e\in E^v} {}[p_{\s(e)}] = [p_v]\)
  holds in \(\prop(\Cst(G))\).
  Since~\(\varphi\)
  is a homomorphism, this implies
  \(\sum_{e\in E^v} \varphi[p_{\s(e)}] = \varphi[p_v]\).
  Equivalently, there is a unitary~\(U_v\)
  between the corresponding proper Hilbert \(B\)\nb-modules
  \(\bigoplus_{e\in E^v} \Hilm_{\s(e)}\) and~\(\Hilm_v\).
  The Hilbert modules~\(\Hilm_v\)
  for \(v\in V\)
  and the unitaries~\(U_v\)
  for \(v\in V_\reg\)
  describe a proper \(\Cst(G),B\)-correspondence~\(\Hilm\),
  which induces a map \(\Hilm_*\colon \prop(\Cst(G)) \to \prop(B)\).
  We claim that \(\Hilm_*=\varphi\).
  By construction, \(\Hilm_*[p_v] = [\Hilm_v] = \varphi[p_v]\)
  for all \(v\in V\).
  The subset of~\(\prop(\Cst(G))\)
  where \(\Hilm_*\)
  and~\(\varphi\)
  coincide is a submonoid.  Hence it contains all finite sums of the
  vertex projections.  Any class in \(\prop(\Cst(G))\)
  is such a finite sum by Theorem~\ref{the:prop_graph}.  Hence
  \(\Hilm_*=\varphi\) holds on all of \(\prop(\Cst(G))\).
\end{proof}

\begin{remark}
  \label{rem:replace_Hilbi_by_projections}
  By Theorem~\ref{the:prop_CB}, any proper Hilbert \(B\)\nb-module
  is isomorphic to \(p\cdot B^\infty\) for some projection
  \(p\in\Mat_\infty(B)\), and unitary operators between proper
  Hilbert \(B\)\nb-modules are equivalent to equivalences of
  projections.  Hence we may replace the families
  \(\bigl((\Hilm_v)_{v\in V},(U_v)_{v\in V_\reg}\bigr)\) by families
  \(\bigl((P_v)_{v\in V},(U_v)_{v\in V_\reg}\bigr)\) consisting of
  projections in \(\Mat_\infty(B)\) and equivalences between them.
\end{remark}

Let \(\varphi\colon \prop(\Cst(G))\to\prop(B)\)
be a homomorphism.  There may be several proper
\(\Cst(G),B\)-correspondences~\(\Hilm\) with \(\Hilm_*=\varphi\).
By Theorem~\ref{the:lift_prop_CB}, \(\Hilm\) is given by proper
Hilbert \(B\)\nb-modules~\(\Hilm_v\)
for \(v\in V\)
and unitaries
\(U_v\colon \bigoplus_{e\in E^v} \Hilm_{\s(e)} \to \Hilm_v\)
for \(v\in V_\reg\).
We must have \([\Hilm_v] = \varphi[p_v]\) in \(\prop(B)\), which
determines~\(\Hilm_v\) up to isomorphism.  Given any family of
Hilbert \(B\)\nb-module isomorphisms \(W_v\colon \Hilm_v \to
\Hilm_v'\) for \(v\in V\),
there are unique unitaries~\(U_v'\)
for \(v\in V_\reg\)
that make~\eqref{eq:isom_corr_HU} commute, so that~\((W_v)\)
induces an isomorphism \((\Hilm_v,U_v) \cong (\Hilm'_v,U'_v)\).
Thus it does not matter how we choose the Hilbert
\(B\)\nb-modules~\(\Hilm_v\)
in their isomorphism classes.  But the choice of the operators~\(U_v\)
does matter.  They contain information about the action of the
\Cstar{}correspondence
on~\(\K_1\).
We shall make this more precise later.  Here we prepare this with some
simple observations.

\begin{lemma}
  \label{lem:difference_unitaries}
  Let~\(\Hilm_v\) for \(v\in V\) be Hilbert \(B\)\nb-modules.  Let
  \(U_v\colon \bigoplus_{e\in E^v} \Hilm_{\s(e)} \congto \Hilm_v\)
  for \(v\in V_\reg\) be unitaries.
  Families of unitaries
  \(U_v'\colon \bigoplus_{e\in E^v} \Hilm_{\s(e)} \congto \Hilm_v\)
  for \(v\in V_\reg\)
  are in bijection with families of unitaries
  \(\Upsilon_v\in \U(\Hilm_v)\)
  for \(v\in V_\reg\),
  by mapping~\(U'_v\)
  to \(\Upsilon_v\defeq U'_v U_v^*\in \U(\Hilm_v)\)
  and \(\Upsilon_v\in\U(\Hilm_v)\)
  to \(U'_v\defeq \Upsilon_v U_v\).
  An isomorphism between the \(\Cst(G),B\)-correspondences
  described by \(((\Hilm_v),(U_v))\)
  and \(((\Hilm_v),(\Upsilon_v U_v))\)
  is a family of unitaries \(W_v\in \U(\Hilm_v)\) for \(v\in V\) with
  \[
    \Upsilon_v = W_v \circ U_v \circ
    \biggl( \bigoplus_{e\in E^v} W_{\s(e)}^*\biggr)\circ U_v^*
  \]
  for all \(v\in V_\reg\).
\end{lemma}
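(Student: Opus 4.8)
The plan is to treat the two assertions of the lemma in turn; both reduce to routine manipulations, the second one via Theorem~\ref{the:lift_prop_CB}.

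For the bijection statement, I would argue directly. Given unitaries $U_v, U_v'\colon \bigoplus_{e\in E^v}\Hilm_{\s(e)} \congto \Hilm_v$, the composite $\Upsilon_v\defeq U_v' U_v^*$ is a unitary on $\Hilm_v$; conversely, for $\Upsilon_v\in\U(\Hilm_v)$ the composite $\Upsilon_v U_v$ is a unitary from $\bigoplus_{e\in E^v}\Hilm_{\s(e)}$ onto $\Hilm_v$. These two assignments are mutually inverse: using $U_v U_v^* = \id_{\Hilm_v}$ and $U_v^* U_v = \id$ on $\bigoplus_{e\in E^v}\Hilm_{\s(e)}$ one gets $(\Upsilon_v U_v)U_v^* = \Upsilon_v$ and $(U_v' U_v^*)U_v = U_v'$. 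Nothing here is delicate.

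For the second part, I would apply Theorem~\ref{the:lift_prop_CB} to the two proper $\Cst(G),B$-correspondences given by $\bigl((\Hilm_v),(U_v)\bigr)$ and $\bigl((\Hilm_v),(\Upsilon_v U_v)\bigr)$, that is, with $\Hilm_{1,v}=\Hilm_{2,v}=\Hilm_v$, $U_{1,v}=U_v$ and $U_{2,v}=\Upsilon_v U_v$. By that theorem an isomorphism of these correspondences is precisely a family of unitaries $W_v\in\U(\Hilm_v)$, $v\in V$, making the square~\eqref{eq:isom_corr_HU} commute for every $v\in V_\reg$; and commutativity of that square reads $W_v\circ U_v = (\Upsilon_v U_v)\circ\bigl(\bigoplus_{e\in E^v}W_{\s(e)}\bigr)$. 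Since $U_v$ and $\bigoplus_{e\in E^v}W_{\s(e)}$ are unitaries, this is equivalent, after composing on the right with $\bigl(\bigoplus_{e\in E^v}W_{\s(e)}\bigr)^* = \bigoplus_{e\in E^v}W_{\s(e)}^*$ and then with $U_v^*$, to the asserted identity $\Upsilon_v = W_v\circ U_v\circ\bigl(\bigoplus_{e\in E^v}W_{\s(e)}^*\bigr)\circ U_v^*$.

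The only point calling for any care is getting the direction of composition in~\eqref{eq:isom_corr_HU} right when one turns commutativity of the square into the displayed formula; there is no genuine obstacle, since the lemma merely unwinds Theorem~\ref{the:lift_prop_CB} in the special case where the two correspondences share the same underlying Hilbert modules.
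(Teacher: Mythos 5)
Your proposal is correct and follows the paper's own argument essentially verbatim: the bijection is verified by the trivial cancellations $U_vU_v^*=\id$, and the displayed identity is obtained by rewriting the commutativity of the square~\eqref{eq:isom_corr_HU} with $U_{1,v}=U_v$ and $U_{2,v}=\Upsilon_v U_v$, exactly as in the paper. The direction of composition you worked out is the right one, so there is nothing to add.
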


\begin{proof}
  The bijective correspondence between \(U'_v\)
  and~\(\Upsilon_v\)
  is trivial.  The condition
  \(\Upsilon_v = W_v \circ U_v \circ \left( \bigoplus_{e\in E^v}
    W_{\s(e)}\right)^* \circ U_v^*\)
  is equivalent to the commutativity of the
  diagram~\eqref{eq:isom_corr_HU} with \(U_{1,v} = U_v\)
  and \(U_{2,v} = \Upsilon_v U_v\).
\end{proof}

Lemma~\ref{lem:difference_unitaries} allows us to describe
homotopies and asymptotic and approximate isomorphisms between
\(\Cst(G),B\)-correspondences.  Since homotopic projections are
equivalent, two correspondences that are homotopic must have
isomorphic Hilbert modules~\(\Hilm_v\).  The same holds for
asymptotically or approximately isomorphic correspondences.  Assume
for simplicity that they even have the same Hilbert
modules~\(\Hilm_v\).  By Lemma~\ref{lem:difference_unitaries}, the
two correspondences are described by unitaries
\(U_v,\Upsilon_v U_v\colon \bigoplus_{e\in E^v} \Hilm_{\s(e)}
\rightrightarrows \Hilm_v\) for some \(\Upsilon_v\in\U(\Hilm_v)\)
for \(v\in V_\reg\), respectively.  A homotopy between them consists
of norm-continuous homotopies \((\Upsilon_{v,t})_{t\in [0,1]}\) in
\(\U(\Hilm_v)\) for all \(v\in V_\reg\) and unitaries
\(W_{v,j}\in \U(\Hilm_v)\) for \(j=0,1\) and \(v\in V\), such that
\begin{align*}
  \Upsilon_{v,0}
  &= W_{v,0} \circ U_v \circ
    \biggl( \bigoplus_{e\in E^v} W_{\s(e),0}^*\biggr)\circ U_v^*,
  \\
  \Upsilon_{v,1}
  &= W_{v,1} \circ \Upsilon_v U_v \circ
    \biggl( \bigoplus_{e\in E^v} W_{\s(e),1}^*\biggr)\circ U_v^*.
\end{align*}
In other words, \((\Upsilon_{v,t})_{v\in V_\reg}\) for
\(t\in [0,1]\) yields a homotopy between two correspondences that
are isomorphic to those corresponding to \((U_v)_{v\in V_\reg}\) and
\((\Upsilon_v U_v)_{v\in V_\reg}\).

An asymptotic isomorphism between the two correspondences is given by
a family of norm-continuous paths of \(W_{v,t} \in \U(\Hilm_v)\)
for \(t\in [0,\infty\mathclose[\), \(v\in V\), such that
\[
  \lim_{t\to\infty} W_{v,t} \circ U_v \circ
  \biggl( \bigoplus_{e\in E^v} W_{\s(e),t}^*\biggr)\circ U_v^*
  = \Upsilon_v
\]
for all \(v\in V_\reg\);
there is no constraint on~\(W_{v,0}\).
An approximate isomorphism between them is similar, but instead
\(W_{v,t} \in \U(\Hilm_v)\)
is needed only for \(t\in \N\), \(v\in V\).  We can make this even
more explicit:

\begin{lemma}
  \label{lem:approximate_isomorphism}
  Let~\(V\) be countable and let \(((\Hilm_v),(U_v))\) and
  \(((\Hilm_v),(\Upsilon_v U_v))\) describe two
  \(\Cst(G),B\)-correspondences.  They are approximately isomorphic
  if and only if for each finite subset \(F\subseteq V\) and each
  \(\varepsilon>0\) there is a family of unitaries
  \(W_v\in \U(\Hilm_v)\) for \(v\in V\) with
  \[
    \norm*{\Upsilon_v - W_v \circ U_v \circ
    \biggl( \bigoplus_{e\in E^v} W_{\s(e)}^*\biggr)\circ U_v^*} <\varepsilon
  \]
  for all \(v\in F \cap V_\reg\).
\end{lemma}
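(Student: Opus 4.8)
The plan is to reduce the statement to the explicit description of approximate isomorphisms recorded just before the lemma. By that discussion, together with Lemma~\ref{lem:difference_unitaries}, an approximate isomorphism between the correspondences described by \(((\Hilm_v),(U_v))\) and \(((\Hilm_v),(\Upsilon_v U_v))\) amounts to a family of unitaries \(W_{v,n}\in\U(\Hilm_v)\), indexed by \(v\in V\) and \(n\in\N\), such that
\[
  \lim_{n\to\infty} W_{v,n} \circ U_v \circ
  \biggl( \bigoplus_{e\in E^v} W_{\s(e),n}^*\biggr)\circ U_v^* = \Upsilon_v
\]
in norm for every \(v\in V_\reg\). It therefore suffices to show that such a family exists precisely when the finite-set condition of the lemma holds, and this is a routine diagonalisation using the countability of~\(V\).

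For the ``only if'' direction, suppose \((W_{v,n})\) is such a family. Given a finite subset \(F\subseteq V\) and \(\varepsilon>0\), the set \(F\cap V_\reg\) is finite, so there is a single \(n\in\N\) with
\[
  \norm*{\Upsilon_v - W_{v,n} \circ U_v \circ
  \biggl( \bigoplus_{e\in E^v} W_{\s(e),n}^*\biggr)\circ U_v^*} <\varepsilon
\]
for all \(v\in F\cap V_\reg\) at once; the family \((W_{v,n})_{v\in V}\) for this fixed~\(n\) is then as required.

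For the ``if'' direction, enumerate \(V=\{v_1,v_2,\dotsc\}\) and put \(F_n\defeq\{v_1,\dotsc,v_n\}\). For each \(n\in\N\), the hypothesis applied with \(F=F_n\) and \(\varepsilon=\frac1n\) yields a family of unitaries \(W_v^{(n)}\in\U(\Hilm_v)\), indexed by \emph{all} \(v\in V\), with
\[
  \norm*{\Upsilon_v - W_v^{(n)} \circ U_v \circ
  \biggl( \bigoplus_{e\in E^v} \bigl(W_{\s(e)}^{(n)}\bigr)^*\biggr)\circ U_v^*} < \frac1n
\]
for all \(v\in F_n\cap V_\reg\). Set \(W_{v,n}\defeq W_v^{(n)}\); this is defined for every \(v\in V\) and \(n\in\N\). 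If \(v\in V_\reg\) has index~\(k\), that is \(v=v_k\), then \(v\in F_n\) for all \(n\ge k\), so the norm above is \(<\frac1n\) for all \(n\ge k\) and hence tends to~\(0\). Thus \((W_{v,n})\) satisfies the displayed limit for every \(v\in V_\reg\), so it is an approximate isomorphism between the two correspondences.

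There is no serious obstacle here. The only points needing a moment's care are that in the ``if'' direction the hypothesis must be invoked so as to supply a family of unitaries indexed by all of~\(V\) at each stage, so that the diagonal family \((W_{v,n})\) is globally defined, and that the finiteness of \(F\cap V_\reg\) (respectively of each \(F_n\cap V_\reg\)) is precisely what allows one to pass between the per-vertex estimates and the uniform-over-\(F\) formulation. The countability of~\(V\) is what makes the diagonalisation in the ``if'' direction possible.
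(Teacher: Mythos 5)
Your proposal is correct and follows essentially the same route as the paper: the paper's proof is exactly the diagonalisation over an increasing exhaustion \(V=\bigcup_n F_n\) with \(\varepsilon=2^{-n}\), relying on the explicit description of approximate isomorphisms given just before the lemma. Your write-up merely adds the (easy) ``only if'' direction and the bookkeeping details that the paper leaves implicit.
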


\begin{proof}
  Write \(V= \bigcup_{n\in\N} F_n\) as an increasing union of finite
  subsets.  Let the unitaries~\(W_{v,n}\) be as in the statement of
  the lemma for \(F_n\subseteq V\) and \(\varepsilon=2^{-n}\).
  These unitaries give the desired approximate isomorphism.
\end{proof}

\subsection{The AF-case}
\label{sec:AF-case}

In this short subsection, we show that the techniques above already
suffice to classify \Cstar{}correspondences from \(\Cst(G)\)
to~\(B\) if our graph \Cstar{}algebra is AF.  This should be
expected because the following sections are mainly about
incorporating~\(\K_1(\Cst(G))\) into the picture, which is trivial
in the AF-case.

\begin{definition}
  A \emph{cycle} in a directed graph~\(G\) is a closed path
  \(e_1,\dotsc,e_n\), that is, \(s(e_1) = r(e_n)\),
  \(s(e_2) = r(e_1)\), \ldots, \(s(e_n) = r(e_{n-1})\).  A cycle is
  \emph{simple} if the vertices \(s(e_1),\dotsc,s(e_n)\) are all
  distinct.
\end{definition}

Assume for some time that~\(G\) has no cycles.  Equivalently,
\(\Cst(G)\) is an AF-algebra.  Then the invariant \(\prop(\Cst(G))\)
determines \(\Cst(G)\) up to isomorphism.  We are going to prove
that proper \(\Cst(G),B\)-correspondences are classified by the
induced map \(\prop(\Cst(G)) \to \prop(B)\) up to approximate
isomorphism:

\begin{proposition}
  \label{pro:AF-case}
  Let~\(G\) be a directed graph without cycles.  Let~\(B\) be a
  \Cstar{}algebra.  Identify proper
  \(\Cst(G),B\)-correspondences with families
  \(((\Hilm_v)_{v\in V},(U_v)_{v\in V_\reg})\).  Assume
  that \(((\Hilm_v),(U_v))\) and \(((\Hilm'_v),(U'_v))\) induce the
  same map \(\prop(\Cst(G)) \to \prop(B)\).
  \begin{enumerate}
  \item Let \(F\subseteq V_\reg\) be a finite subset.  Then
    \(((\Hilm'_v),(U'_v))\) is isomorphic to a proper
    \(\Cst(G),B\)-correspondence \(((\Hilm''_v),(U''_v))\) with
    \(\Hilm''_v=\Hilm_v\) for all \(v\in V\) and \(U''_v=U_v\) for
    \(v\in F\).
  \item If~\(G\) is countable, then \(((\Hilm_v),(U_v))\) and
    \(((\Hilm'_v),(U'_v))\) are approximately isomorphic.
  \end{enumerate}
\end{proposition}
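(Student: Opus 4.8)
The plan is to reduce the proposition to a bookkeeping problem about families of unitaries, and then solve that problem by an induction running along the graph, using that~\(G\) has no cycles.

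\emph{Reduction to unitaries~\(\Upsilon_v\).}  Since both correspondences induce the same map~\(\varphi\), we have \([\Hilm_v]=\varphi[p_v]=[\Hilm'_v]\) in \(\prop(B)\) (as noted after Theorem~\ref{the:lift_prop_CB}), hence \(\Hilm_v\cong\Hilm'_v\) as Hilbert \(B\)\nb-modules by Theorem~\ref{the:prop_CB}.  By the remark following Theorem~\ref{the:lift_prop_CB}, the isomorphism class of a correspondence does not change if we replace its modules by isomorphic ones, so I may assume that \(((\Hilm'_v),(U'_v))\) has the very same modules~\(\Hilm_v\); by Lemma~\ref{lem:difference_unitaries} it then has the form \(((\Hilm_v),(\Upsilon_v U_v))\) for unitaries \(\Upsilon_v\in\U(\Hilm_v)\), \(v\in V_\reg\).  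Thus it suffices to compare \(((\Hilm_v),(U_v))\) and \(((\Hilm_v),(\Upsilon_v U_v))\).

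\emph{The inductive construction.}  As \(G\) has no cycles, ``there is a directed path from~\(w\) to~\(v\)'' defines a strict partial order on~\(V\) in which \(\s(e)\prec\rg(e)\) for every edge~\(e\), and each \(E^v\) is finite by row-finiteness.  Given a finite \(F\subseteq V_\reg\), I would choose a linear ordering \(v_1,\dots,v_k\) of~\(F\) refining this partial order, set \(W_v\defeq\id_{\Hilm_v}\) for \(v\notin F\), and define recursively
\[
  W_{v_i}\defeq U_{v_i}\circ\Bigl(\bigoplus_{e\in E^{v_i}}W_{\s(e)}\Bigr)\circ U_{v_i}^*\circ\Upsilon_{v_i}^*\in\U(\Hilm_{v_i}).
\]
This makes sense because each \(\s(e)\) with \(e\in E^{v_i}\) is either outside~\(F\), where \(W_{\s(e)}=\id\), or equals some~\(v_j\) with \(j<i\), where \(W_{\s(e)}\) has already been defined.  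A short computation then shows \(U_v=W_v\circ(\Upsilon_v U_v)\circ\bigl(\bigoplus_{e\in E^v}W_{\s(e)}^*\bigr)\) for all \(v\in F\).  Defining \(U''_v\defeq W_v\circ(\Upsilon_v U_v)\circ\bigl(\bigoplus_{e\in E^v}W_{\s(e)}^*\bigr)\) for every \(v\in V_\reg\), the diagrams~\eqref{eq:isom_corr_HU} commute by construction, so \((W_v)_{v\in V}\) is an isomorphism \(((\Hilm_v),(\Upsilon_v U_v))\cong((\Hilm_v),(U''_v))\), and \(U''_v=U_v\) for \(v\in F\).  Composing with the isomorphism from the reduction step proves part~(1).

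\emph{From (1) to (2), and the main obstacle.}  Assume now \(G\), and hence~\(V\), is countable.  I would verify the criterion of Lemma~\ref{lem:approximate_isomorphism} for \(((\Hilm_v),(U_v))\) and \(((\Hilm_v),(\Upsilon_v U_v))\): given a finite \(F\subseteq V\) and \(\varepsilon>0\), run the construction above for the finite set \(F\cap V_\reg\) and put \(\tilde W_v\defeq W_v^*\).  Rearranging the identity \(U_v=W_v\circ(\Upsilon_v U_v)\circ\bigl(\bigoplus_{e\in E^v}W_{\s(e)}^*\bigr)\) yields \(\tilde W_v\circ U_v\circ\bigl(\bigoplus_{e\in E^v}\tilde W_{\s(e)}^*\bigr)\circ U_v^*=\Upsilon_v\) for every \(v\in F\cap V_\reg\) --- an exact equality, a fortiori the inequality required.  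Hence the two correspondences are approximately isomorphic, and, composing with the isomorphism from the reduction step, so are \(((\Hilm_v),(U_v))\) and \(((\Hilm'_v),(U'_v))\); countability enters only inside Lemma~\ref{lem:approximate_isomorphism}, to patch the approximating unitaries over an exhaustion of~\(V\).  The one step that needs care is the recursion: the equation determining the unitary at~\(v\) involves the still-unknown unitaries at the in-neighbours \(\s(e)\) of~\(v\), so \(F\) must be processed in an order compatible with the graph --- which is exactly what acyclicity of~\(G\) supplies, and is the reason no obstruction class enters in this AF case.  Everything else is routine translation between correspondences, the unitaries~\(\Upsilon_v\), and the squares~\eqref{eq:isom_corr_HU}, via Lemmas~\ref{lem:difference_unitaries} and~\ref{lem:approximate_isomorphism}.
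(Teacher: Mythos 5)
Your proposal is correct and follows essentially the same route as the paper: reduce to equal Hilbert modules via Lemma~\ref{lem:difference_unitaries}, then use the partial order on~\(V\) supplied by acyclicity to build the correcting unitaries \(W_v\) over a finite set \(F\) in a well-founded way, and feed the resulting exact identities into Lemma~\ref{lem:approximate_isomorphism} for part~(2). The only (immaterial) difference is that the paper corrects one vertex at a time and composes the resulting isomorphisms, whereas you write down the full recursion for \((W_v)_{v\in F}\) in one pass.
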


\begin{proof}
  We have already explained how to arrange \(\Hilm'_v=\Hilm_v\) for
  all \(v\in V\).  Define \(x\prec y\) for \(x,y\in V\) if there is
  a path from~\(x\) to~\(y\) in~\(G\).  This is a partial
  order on~\(V\) because~\(G\) has no cycles.  Let
  \(F\subseteq V_\reg\) be finite.  We may enumerate
  \(F\cong \{1,\dotsc,n\}\) so that \(x\prec y\) implies \(x\le y\)
  in \(\{1,\dotsc,n\}\).  We construct the isomorphism from
  \(((\Hilm'_v),(U'_v))\) to \(((\Hilm''_v),(U''_v))\) by induction
  on~\(n\), the number of points in~\(F\).  We assume that we have
  such an isomorphism for \(\{1,\dotsc,n-1\}\).  Since isomorphisms
  of correspondences may be composed, we may assume that already
  \(U_i'=U_i\) for \(i=1,\dotsc,n-1\).  Now we define an isomorphism
  \((W_v)_{v\in V}\) by \(W_n\defeq U_n (U_n')^*\) and
  \(W_v\defeq 1\) for all \(v\neq n\) and, in particular, for all
  \(v\in V\setminus F\).  By
  Lemma~\ref{lem:approximate_isomorphism}, this is an isomorphism
  from \(((\Hilm'_v),(U'_v))\) to \(((\Hilm'_v),(U''_v))\) with
  \[
    U_v'' \defeq W_v\circ U_v'\circ \biggl(\bigoplus_{r(e)=v}
    W_{s(e)}^*\biggr).
  \]
  If \(v<n\), then \(s(e)<n\) for all \(e\in E\) with \(r(e)=v\), so
  that \(U_v''=U_v' = U_v\).  If \(v=n\), then \(s(e)<n\) for all
  \(e\in E\) with \(r(e)=n\), so that \(U_n''=W_n U_n' = U_n\).
  This proves the first statement.
  If~\(G\) is countable, we may enumerate \(V_\reg\cong\N\).  The
  isomorphisms \(((\Hilm_v),(U_v')) \cong ((\Hilm_v),(U_v''))\) for
  the finite subsets \(\{1,\dotsc,n\}\subseteq V\) form an
  approximate isomorphism between the \(\Cst(G),B\)-correspondences
  \(((\Hilm_v),(U_v'))\) and \(((\Hilm_v),(U_v))\) because
  \Star{}polynomials in the generators \(p_v,s_e\) are dense
  in~\(\Cst(G)\).  So the first statement implies the second one.
\end{proof}

\section{Projections versus K-theory diagrams}
\label{sec:Proj_to_ideal-K}

For a \Cstar{}algebra~\(B\)
that satisfies ``stable weak cancellation,''
we are going to describe~\(\prop(B)\)
using the ideal structure
of~\(B\)
and the \(\K_0^+\)\nb-monoids
for certain ideals of~\(B\).
Graph \Cstar{}algebras
and purely infinite \Cstar{}algebras
have stable weak cancellation.  This clarifies the relationship
between~\(\prop(B)\)
and invariants that have already been used in the classification of
nonsimple \Cstar{}algebras.

Let \(p\in\Mat_\infty(B)\)
be a projection.  The ideal \(\supp p\)
generated by~\(p\)
is the closed ideal generated by products~\(b_1^* p b_2\)
with \(b_1,b_2\in B^\infty\).
This is equal to the ideal generated by inner products of vectors in
the Hilbert module \(p \cdot B^\infty\)
and to the ideal generated by the matrix coefficients of~\(p\).
Any projection \(p\in\prop(B)\)
belongs to \(\Mat_\infty(\supp p)\)
and is a full projection there by construction of~\(\supp p\).
Equivalent projections generate the same ideal.  Hence we get a map
\[
\supp\colon \prop(B) \to \Ideals(B).
\]
It is \emph{additive}, that is, \(\supp 0 = 0\) and
\begin{equation}
  \label{eq:supp_additive}
  \supp (p\oplus q) = \supp p + \supp q  
\end{equation}
for all \(p,q\in\prop(B)\).
Hence it is also \emph{monotone}:
\begin{equation}
  \label{eq:supp_monotone}
  p,q\in \prop(B),\ p \le q \quad\Longrightarrow\quad \supp p \subseteq \supp q;
\end{equation}
here \(p \le q\) by definition if there is \(p'\in\prop(B)\)
with \(p\oplus p' = q\).

\begin{definition}
  \label{def:Idealsc}
  Let \(\Idealsc(B) \subseteq \Ideals(B)\)
  be the range of the map~\(\supp\).
  We also say that an ideal \emph{is generated by a projection} if it
  is equal to \(\supp p\)
  for some projection \(p\in\Mat_\infty(B)\).
  Let \(\Idealsc^\infty(B) \subseteq \Ideals(B)\)
  be the larger subset of all ideals in~\(B\)
  that are generated by some set of projections in \(\Mat_\infty(B)\).
\end{definition}

The lattice~\(\Ideals(B)\)
is isomorphic to the lattice of open subsets of the topological
space~\(\widehat{B}\)
of irreducible representations of~\(B\)
(see \cite{Dixmier:Cstar-algebres}*{\S3.2}).  Let
\(\widehat{I}\subseteq \widehat{B}\)
denote the open subset of~\(\widehat{B}\)
corresponding to \(I\in\Ideals(B)\).
This space is indeed homeomorphic to the space of irreducible
representations of~\(I\).

\begin{lemma}
  \label{lem:ideal_gen_by_projection}
  Let~\(B\)
  be a \Cstar{}algebra
  and \(I\in\Ideals(B)\).
  Then \(I\in\Idealsc(B)\)
  if and only if \(I\in\Idealsc^\infty(B)\)
  and~\(\widehat{I}\) is quasi-compact.
\end{lemma}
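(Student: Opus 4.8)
The plan is to reduce to the case \(I=B\) and then treat the two implications separately. Throughout I use that \(\Ideals(B)\) is isomorphic to the lattice of open subsets of \(\widehat B\), with suprema of ideals corresponding to unions, and that \(\widehat I\) is canonically the spectrum of the \Cstar{}algebra~\(I\). Any projection \(p\in\Mat_\infty(B)\) with \(\supp p\subseteq I\) lies in \(\Mat_\infty(\supp p)\subseteq\Mat_\infty(I)\) by construction of~\(\supp\); hence in both implications we may replace~\(B\) by~\(I\) and assume \(I=B\). Then ``\(I\in\Idealsc(B)\)'' becomes ``\(B=\supp p\) for some projection \(p\in\Mat_n(B)\)'', i.e.\ \(p\) is full, and ``\(I\in\Idealsc^\infty(B)\)'' becomes ``\(B\) is generated as a closed ideal by a set of projections in \(\Mat_\infty(B)\)''.

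Suppose first that \(B=\supp p\) with \(p\in\Mat_n(B)\). Then \(B\in\Idealsc^\infty(B)\) is clear, so it remains to show that \(\widehat B\) is quasi-compact. Since \(\supp p=B\), the corner \(p\Mat_n(B)p\) is a full corner in \(\Mat_n(B)\), hence Morita--Rieffel equivalent to \(\Mat_n(B)\) and thus to~\(B\); as \(p\) is a unit for it, \(p\Mat_n(B)p\) is unital, and the primitive ideal space of a unital \Cstar{}algebra is quasi-compact. Rieffel's theorem supplies a homeomorphism \(\widehat{p\Mat_n(B)p}\cong\widehat{\Mat_n(B)}\cong\widehat B\), so \(\widehat B\) is quasi-compact. (Alternatively one argues directly: given closed ideals \((I_\alpha)\) with \(\bigvee_\alpha I_\alpha=B\), approximate~\(p\) within distance~\(<1\) by some \(x\in\sum_{j=1}^k\Mat_n(I_{\alpha_j})=\Mat_n\bigl(\sum_{j=1}^k I_{\alpha_j}\bigr)\); then \(\norm{p-pxp}\le\norm{p-x}<1\), so \(pxp\) is invertible in \(p\Mat_n(B)p\), and \(p=(pxp)^{-1}pxp\in\Mat_n\bigl(\sum_{j=1}^k I_{\alpha_j}\bigr)\), whence \(\sum_{j=1}^k I_{\alpha_j}=\supp p=B\); this is precisely quasi-compactness of \(\widehat B\).)

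For the converse, assume that \(B\) is generated as a closed ideal by a set~\(S\) of projections in \(\Mat_\infty(B)\) and that \(\widehat B\) is quasi-compact. Then \(\bigvee_{p\in S}\supp p=B\), so the open subsets \(\widehat{\supp p}\), \(p\in S\), cover \(\widehat B\); quasi-compactness yields finitely many \(p_1,\dotsc,p_n\in S\) with \(\widehat{\supp p_1}\cup\dotsb\cup\widehat{\supp p_n}=\widehat B\), that is, \(\supp p_1+\dotsb+\supp p_n=B\), using that a finite sum of closed ideals is again a closed ideal. Setting \(q\defeq p_1\oplus\dotsb\oplus p_n\in\Mat_\infty(B)\), the additivity~\eqref{eq:supp_additive} of \(\supp\) gives \(\supp q=\supp p_1+\dotsb+\supp p_n=B\), so \(B\in\Idealsc(B)\).

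The substantive inputs are just the behaviour of full corners (or the equivalent elementary approximation argument) together with the quasi-compactness of the primitive ideal space of a unital \Cstar{}algebra; the step I expect to require the most care is the forward direction, where quasi-compactness of \(\widehat I\) has to be squeezed out of the mere existence of a single full projection generating~\(I\).
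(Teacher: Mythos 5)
Your proof is correct, and its ``if'' half follows the paper's converse argument essentially verbatim: the open sets coming from the finitely generated subideals cover \(\widehat I\), quasi-compactness extracts a finite subcover, and the direct sum of the corresponding finitely many projections generates~\(I\). The difference lies in the forward direction. The paper identifies \(\widehat{I}\) with the set of irreducible representations \([\varrho]\) satisfying \(\norm{\varrho_n(p)}\ge 1\) and quotes Dixmier's Proposition~3.3.7, which says that such sets are quasi-compact. You instead use that a full projection \(p\in\Mat_n(B)\) cuts out a unital full corner, so that \(\widehat{B}\cong\widehat{p\Mat_n(B)p}\) is the spectrum of a unital \Cstar{}algebra; and your parenthetical approximation argument --- perturb \(p\) into \(\Mat_n\bigl(\sum_{j=1}^k I_{\alpha_j}\bigr)\) and use invertibility of \(pxp\) in the corner to conclude \(\supp p\subseteq \sum_{j=1}^k I_{\alpha_j}\) --- is in effect a self-contained proof of the same quasi-compactness statement that needs neither Dixmier's proposition nor Rieffel induction. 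The preliminary reduction to \(I=B\) is legitimate (a projection lies in \(\Mat_\infty(\supp p)\), closed ideals of~\(I\) are closed ideals of~\(B\), and \(\widehat I\) is the spectrum of~\(I\)), though it is not actually needed for the covering half. All three routes for the forward direction establish the same fact; yours trades an external citation for either Morita theory or a two-line norm estimate, the latter being arguably the most elementary option.
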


\begin{proof}
  Let \(p\in\Mat_n(B)\)
  for some \(n\in\N\) be idempotent.  Let~\(I\)
  be the ideal generated by~\(p\).
  An irreducible representation~\(\varrho\)
  of~\(B\)
  remains nondegenerate on~\(I\)
  if and only if \(\varrho_n(p)\neq0\),
  if and only if \(\norm{\varrho_n(p)}\ge 1\),
  where~\(\varrho_n\)
  is the corresponding irreducible representation of~\(\Mat_n(B)\).
  So \(\widehat{I}\subseteq \widehat{B}\)
  is the set of all \([\varrho]\)
  with \(\norm{\varrho_n(p)}\ge 1\).
  This subset is quasi-compact by
  \cite{Dixmier:Cstar-algebres}*{Proposition~3.3.7}.
  Conversely, let~\(I\)
  be generated by some set of
  projections~\((p_\alpha)_{\alpha\in S}\)
  and let \(\widehat{I}\subseteq \widehat{B}\)
  be quasi-compact.  For each finite subset \(F\subseteq S\),
  let~\(I_F\)
  be the ideal generated by the projections~\(p_\alpha\)
  for \(\alpha\in F\).
  Then \(\bigcup I_F = I\).
  Since~\(\widehat{I}\)
  is quasi-compact, there is some finite subset \(F\subseteq S\)
  with \(\widehat{I_F}=\widehat{I}\)
  and hence \(I_F=I\).
  Thus~\(I\)
  is generated by the projection \(\bigoplus_{\alpha\in F} p_\alpha\).
\end{proof}

\begin{remark}
  A \Cstar{}algebra
  has the \emph{ideal property} if
  \(\Ideals(B) = \Idealsc^\infty(B)\), that is,
  any ideal is generated by the projections that it contains.
  Then an ideal \(I\idealin B\)
  belongs to~\(\Idealsc(B)\)
  if and only if~\(\widehat{I}\)
  is quasi-compact by Lemma~\ref{lem:ideal_gen_by_projection}.  Any
  \Cstar{}algebra
  of real rank~\(0\)
  has the ideal property by
  \cite{Brown-Pedersen:C*-algebras_of_RR0}*{Corollary~2.8 and
    Proposition~2.9}.  
\end{remark}

\begin{remark}
  If~\(B\)
  is separable and purely infinite, then \(I\idealin B\)
  belongs to~\(\Idealsc(B)\)
  if and only if~\(\widehat{I}\)
  is quasi-compact (see
  \cite{Pasnicu-Rordam:Purely_infinite_rr0}*{Proposition~2.7}).
\end{remark}

\begin{definition}[\cites{Ara-Moreno-Pardo:Nonstable_K_for_Graph_Algs,
    Brown-Pedersen:Non-stable}]
  \label{def:stable_weak_cancellation}
  A \Cstar{}algebra~\(B\)
  has \emph{stable weak cancellation} if projections \(p,q\)
  in~\(\Mat_\infty(B)\)
  with \(\supp p = \supp q\)
  and \([p]=[q]\) in \(\K_0(\supp p)\) are equivalent.
\end{definition}

The equality \([p]=[q]\)
in \(\K_0(\supp p)\)
means that there is a projection \(r\in \Mat_\infty(\supp p)\)
so that \(p\oplus r\)
and~\(q\oplus r\)
are equivalent, briefly, \(p\oplus r \sim q \oplus r\).
So stable weak cancellation holds if and only if \(p\sim q\)
follows if \(p\oplus r \sim q \oplus r\)
for a projection \(r \in \Mat_\infty(\supp p \cap \supp q)\):
this can only happen if \(\supp p = \supp q\).

If~\(B\)
is simple, then either \(p=0\)
or \(\supp p = B\).
Thus stable weak cancellation for simple~\(B\)
means cancellation for all nonzero projections.  It allows that there
is a full (equivalently, nonzero) projection \(0^\full\in\prop(B)\)
representing~\(0\)
in~\(\K_0(B)\).
Then~\(\prop(B)\)
does not have cancellation because
\(0^\full + 0^\full = 0 + 0^\full\)
in~\(\prop(B)\).
Thus stable weak cancellation is strictly weaker than cancellation in
the monoid~\(\prop(B)\).

Graph \Cstar{}algebras have stable weak cancellation by
\cite{Ara-Moreno-Pardo:Nonstable_K_for_Graph_Algs}*{Corollary~7.2}.

\begin{theorem}
  \label{the:prop_for_pi}
  Purely infinite \Cstar{}algebras have stable weak cancellation.
\end{theorem}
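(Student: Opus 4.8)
The plan is to recall the structural characterization of purely infinite \(\Cstar\)-algebras due to Kirchberg--Rørdam and deduce stable weak cancellation from the fact that, in a purely infinite \(\Cstar\)-algebra, every nonzero projection is properly infinite. Specifically, let \(p,q\in\Mat_\infty(B)\) be projections with \(\supp p = \supp q\) and \([p]=[q]\) in \(\K_0(\supp p)\); we must show \(p\sim q\). Replacing \(B\) by \(\supp p\) (which is again purely infinite, as an ideal in a purely infinite \(\Cstar\)-algebra), we may assume \(p\) and \(q\) are full projections in \(\Mat_\infty(B)\), so that \(\supp p=\supp q=B\). Stably, by passing to \(B\otimes\Comp\) and using that \(\Mat_\infty(B)\) sits inside \(B\otimes\Comp\), we may regard \(p\) and \(q\) as full properly infinite projections in the \(\sigma\)-unital, purely infinite \(\Cstar\)-algebra \(B\otimes\Comp\).

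The key step is the following: in a purely infinite \(\Cstar\)-algebra, two full properly infinite projections \(p,q\) with \([p]=[q]\) in \(\K_0\) are Murray--von Neumann equivalent. This is essentially \cite{Kirchberg-Rordam:Non-simple_pi}*{Lemma~5.2 and the discussion around it} or can be assembled from the tools there: fullness and proper infiniteness give \(p\precsim q\) and \(q\precsim p\) (one projection is equivalent to a subprojection of the other, in both directions), and then a Cantor--Schröder--Bernstein argument for properly infinite projections upgrades mutual subequivalence together with equality of \(\K_0\)-classes to genuine equivalence. Concretely: since \(q\) is properly infinite and full, \(p\precsim q\); write \(p\sim q_0\le q\). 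Then \([q-q_0]=[q]-[p]=0\) in \(\K_0(B)\), and \(q-q_0\) is a projection in a purely infinite \(\Cstar\)-algebra whose class vanishes, hence \(q-q_0\) is stably equivalent to \(0\); combined with proper infiniteness of \(q_0\), one absorbs \(q-q_0\) into \(q_0\) and obtains \(q_0\sim q\), whence \(p\sim q\).

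I would present this by first stating the two facts I use about purely infinite \(\Cstar\)-algebras — (a) ideals and stabilizations of purely infinite \(\Cstar\)-algebras are purely infinite, and (b) a full properly infinite projection \(e\) dominates every projection \(f\) in the same \(\Cstar\)-algebra in the sense \(f\precsim e\) — both of which are in \cite{Kirchberg-Rordam:Non-simple_pi}. Then the argument is: reduce to the full case by replacing \(B\) with \(\supp p\); lift to \(B\otimes\Comp\) so everything is a genuine projection in a stable, purely infinite \(\Cstar\)-algebra; apply (b) twice to get \(p\precsim q\) and \(q\precsim p\); and finish with the absorption step using the vanishing \(\K_0\)-class of the ``defect'' projection together with proper infiniteness. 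The main obstacle is making the last absorption step precise without re-deriving the cancellation machinery of Kirchberg--Rørdam: the cleanest route is to cite their result that for properly infinite projections \(e,f\) in a \(\Cstar\)-algebra, \(e\sim f\) if and only if \(e\precsim f\), \(f\precsim e\), and \([e]=[f]\) in \(\K_0\), and then observe that pure infiniteness of \(B\) supplies the two subequivalences automatically once \(\supp e=\supp f\).
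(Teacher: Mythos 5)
Your proof is correct and is essentially the paper's: the paper disposes of this theorem by citing Rørdam's classification survey, Theorem~4.1.4 (equivalently, the Kirchberg--Rørdam result that in a purely infinite \Cstar{}algebra every nonzero projection is properly infinite and two projections are equivalent if and only if they generate the same ideal and agree in $\K_0$ of that ideal), which is exactly the statement you unpack via proper infiniteness, mutual subequivalence, and the absorption argument. There is no gap; your version merely re-derives the packaged citation from the underlying Kirchberg--Rørdam lemmas.
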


\begin{proof}
  This follows from \cite{Rordam:Classification_survey}*{Theorem~4.1.4}.
\end{proof}

Our next goal is to prove the following: if~\(B\)
has stable weak cancellation, then the monoid~\(\prop(B)\)
contains the same information as the diagram of monoids~\(\K_0^+(I)\)
for \(I\in\Idealsc(B)\)
with the canonical maps \(\K_0^+(I) \to \K_0^+(J)\)
for \(I,J\in\Idealsc(B)\)
with \(I\subseteq J\).  We first make the statement more precise.

\begin{definition}
  \label{def:diagrams}
  Let~\((L,\le)\)
  be a partially ordered set.  A \emph{diagram of
    \textup{(}semi\textup{)}groups} over
  it consists of (semi)groups~\(X_W\)
  for \(W\in L\)
  and homomorphisms \(\iota_{W_2,W_1}\colon X_{W_1} \to X_{W_2}\)
  for \(W_1\le W_2\)
  in~\(L\),
  such that \(\iota_{W,W}=\id_{X_W}\)
  for all \(W\in L\)
  and \(\iota_{W_3,W_2}\circ \iota_{W_2,W_1} = \iota_{W_3,W_1}\)
  if \(W_1 \le W_2\le W_3\) in~\(L\).

  Given diagrams \(X=(X_W,\varphi_{W_2,W_1})\)
  and \(Y=(Y_W,\psi_{W_2,W_1})\)
  over~\(L\),
  we let \(\Hom_L(X,Y)\)
  be the (semi)group of natural transformations of diagrams; its
  elements are families of homomorphisms
  \(X_W \to Y_W\)
  for \(W\in L\)
  that intertwine the maps \(\varphi_{W_2,W_1}\)
  and~\(\psi_{W_2,W_1}\) for \(W_1\subseteq W_2\) in~\(L\).

  Let \((L_1,\le)\)
  and \((L_2,\le)\)
  be partially ordered sets and \(\psi\colon L_1\to L_2\)
  an order-preserving map.  Let \((X_W,\varphi_{W_2,W_1})_{W\in L_2}\)
  be a diagram of (semi)groups over~\(L_2\).
  Then \(\psi^*((X_W,\varphi_{W_2,W_1})_{W\in L_2})\)
  is the diagram of (semi)groups over~\(L_1\)
  consisting of the (semi)groups \(X_{\psi(W)}\)
  for \(W\in L_1\)
  and the homomorphisms \(\varphi_{\psi(W_2),\psi(W_1)}\)
  for \(W_1,W_2\in L_1\) with \(W_1\le W_2\).
\end{definition}

\begin{definition}
  \label{def:K0_pi}
  Let~\(B\)
  be a \Cstar{}algebra.  Let \(\K_0^+(B,\Idealsc)\)
  be the diagram over~\(\Idealsc(B)\)
  consisting of the Abelian monoids~\(\K_0^+(I)\)
  for \(I\in\Idealsc(B)\)
  and the homomorphisms \(\K_0^+(I) \to \K_0^+(J)\)
  induced by the inclusion maps \(I \to J\)
  for \(I,J\in\Idealsc(B)\)
  with \(I\subseteq J\).
  We define \(\K_0(B,\Idealsc)\)
  and \(\K_1(B,\Idealsc)\)
  similarly, replacing~\(\K_0^+\)
  by \(\K_0\) and~\(\K_1\).
\end{definition}

We recall the definition of an order unit:

\begin{definition}
  \label{def:order_unit}
  An \emph{order unit} of a commutative semigroup~\(M\)
  (with its natural preorder~\(\leq\))
  is an element~\(u\)
  of~\(M\)
  such that for any element~\(x\)
  of~\(M\),
  there is a positive integer~\(n\)
  with \(x \le n\cdot u\).
  A homomorphism \(\varphi\colon M\to M'\)
  \emph{preserves order units} if~\(\varphi(u)\)
  for an order unit~\(u\) in~\(M\) is an order unit in~\(M'\).
  (If \(\varphi(u)\)
  is an order unit for some order unit~\(u\)
  in~\(M\),
  then this happens for all order units~\(u\) in~\(M\).)
\end{definition}

Let \(A\)
and~\(B\)
be \Cstar{}algebras
and let \(\psi\colon \Idealsc(A) \to \Idealsc(B)\)
be monotone.  Then the pull-back diagram
\(\psi^* \bigl(\K_0^+(B,\Idealsc) \bigr)\)
consists of the monoids \(\K_0^+ \bigl(\psi(I) \bigr)\)
for \(I\in\Idealsc(A)\)
with the canonical maps \(\K_0^+ \bigl(\psi(I_1) \bigr) \to \K_0^+
\bigl(\psi(I_2) \bigr)\)
for \(I_1 \subseteq I_2\).

\begin{theorem}
  \label{the:prop_vs_K0plus}
  Let \(A\)
  and~\(B\)
  be \Cstar{}algebras.
  Let~\(B\)
  have stable weak cancellation.  Then there is a natural bijection
  between monoid homomorphisms \(f\colon \prop(A) \to \prop(B)\)
  and pairs \((\psi,(\varphi_I)_{I\in\Idealsc(A)})\),
  where \(\psi\colon \Idealsc(A) \to \Idealsc(B)\)
  is monotone and
  \[
    \varphi=(\varphi_I)_{I\in\Idealsc(A)}\colon
    \K_0^+(A,\Idealsc) \to \psi^*\K_0^+(B,\Idealsc)
  \]
  is a morphism of monoid diagrams such that each~\(\varphi_I\)
  preserves order units.  The map~\(\psi\) in such a pair is
  automatically additive, that is,
  \(\psi(I_1+I_2) = \psi(I_1)+\psi(I_2)\) for all
  \(I_1,I_2\in\Idealsc(A)\).
\end{theorem}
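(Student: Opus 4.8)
The plan is to route the bijection through an intermediate description of $\prop(B)$. Because $B$ has stable weak cancellation (Definition~\ref{def:stable_weak_cancellation}), the map
\[
  \prop(B)\longrightarrow \setgiven{(J,\xi)}{J\in\Idealsc(B),\ \xi\text{ an order unit of }\K_0^+(J)},\qquad
  [q]\longmapsto\bigl(\supp q,\,[q]_{\K_0(\supp q)}\bigr),
\]
is well defined (each $q$ is full in $\Mat_\infty(\supp q)$, and a full projection has order-unit class, using the standard fact that a projection in the ideal generated by $q$ is Murray--von Neumann subequivalent to $q^{\oplus n}$ for some $n$), injective (that is exactly stable weak cancellation), and --- here is the only hard input --- surjective. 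The target set is a monoid under $(J_1,\xi_1)\cdot(J_2,\xi_2)=(J_1+J_2,\,\iota(\xi_1)+\iota(\xi_2))$, where $\iota$ denotes the maps induced by $J_i\subseteq J_1+J_2$, and the displayed map is an isomorphism of monoids by~\eqref{eq:supp_additive}; note $J_1+J_2=\supp(p_1\oplus p_2)\in\Idealsc(B)$.

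Given $f\colon\prop(A)\to\prop(B)$, I would read off the pair. For $I\in\Idealsc(A)$ pick a projection $p$ with $\supp p=I$ and set $\psi(I):=\supp f[p]$. If $p'$ is another generator of $I$, the subequivalence fact produces $n,m$ and projections $r,r'$ with $[p]+[r]=n[p']$ and $[p']+[r']=m[p]$ in $\prop(A)$; applying $f$ and $\supp$ (additive and monotone by~\eqref{eq:supp_additive} and~\eqref{eq:supp_monotone}) gives $\supp f[p]=\supp f[p']$, so $\psi$ is well defined, monotone, and additive. For $q'\in\Mat_\infty(I)$ one has $\supp f[q']=\psi(\supp q')\subseteq\psi(I)$, so $f[q']$ carries a class in $\K_0^+(\psi(I))$; set $\varphi_I([q']_{\K_0(I)})$ to be that class. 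Well-definedness uses that $\K_0(\psi(I))$ is a \emph{group}: if $q'\oplus s\sim q''\oplus s$ with $s\in\Mat_\infty(I)$ then $f[q']+f[s]=f[q'']+f[s]$, and the class of $f[s]$ cancels. Functoriality of $\K_0$ makes $\varphi=(\varphi_I)$ a morphism of monoid diagrams, and $\varphi_I$ preserves order units because $\varphi_I([p])=[f[p]]$ with $f[p]$ full over $\psi(I)$.

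Conversely, given a pair $(\psi,\varphi)$ as in the statement, I would transport through the intermediate description. First one checks that the compatibility of $\varphi$ with the structure maps $\K_0^+(I_i)\to\K_0^+(I_1+I_2)$ forces $\psi$ to be additive --- this is the final assertion of the theorem. Then, for $[q]\in\prop(A)$ with $I:=\supp q$, the class $[q]$ is an order unit of $\K_0^+(I)$, so $\varphi_I([q])$ is an order unit of $\K_0^+(\psi(I))$, and surjectivity of the intermediate map (the realization lemma below) gives a projection $f[q]$ with $\supp f[q]=\psi(I)$ and $[f[q]]_{\K_0(\psi(I))}=\varphi_I([q])$, unique up to equivalence by stable weak cancellation. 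That $f[q_1\oplus q_2]=f[q_1]\oplus f[q_2]$ follows because both sides are projections over $\psi(\supp q_1)+\psi(\supp q_2)=\psi(\supp q_1+\supp q_2)$ carrying the same $\K_0$-class there --- by additivity of $\psi$, additivity of the $\varphi_I$, and the naturality squares of $\varphi$ --- whence stable weak cancellation finishes. A short computation (a generator $p$ of $I$ has $\supp f[p]=\psi(I)$ and the correct $\K_0$-class) shows the two constructions are mutually inverse, and naturality in $A$ and~$B$ is evident.

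The hard part will be the realization lemma: for $B$ with stable weak cancellation, $J\in\Idealsc(B)$, and $\xi$ an order unit of $\K_0^+(J)$, some projection in $\Mat_\infty(B)$ has support exactly $J$ and $\K_0(J)$-class $\xi$. Uniqueness up to equivalence is immediate from stable weak cancellation; existence is the issue. I would begin with any projection $q_0\in\Mat_\infty(J)$ with $[q_0]=\xi$, write $J=\supp p_0$ for a full projection $p_0$, and exploit that $\xi$ dominates $[p_0]$ up to a finite multiple to produce projections in $\Mat_\infty(J)$ whose supports reach the part of $J$ missed by $q_0$ while leaving the $\K_0$-class of $q_0$ unchanged; since $J\in\Idealsc(B)$ the space $\widehat J$ is quasi-compact (Lemma~\ref{lem:ideal_gen_by_projection}), so finitely many such corrections enlarge $\supp q_0$ to all of $J$. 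This existence statement, together with the verification that the pair data forces $\psi$ to be additive, is where essentially all of the work of the theorem lies.
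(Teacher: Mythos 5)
Your overall architecture coincides with the paper's: read off \(\psi\) from \(f\) via \(\supp\), using that the relation \(\supp p\subseteq\supp q\) is expressible purely monoid-theoretically (your subequivalence fact is the paper's Lemma~\ref{lem:supp_relation}); read off \(\varphi_I\) from the restriction \(\prop(I)\to\prop(\psi(I))\), with well-definedness by cancelling in the group \(\K_0(\psi(I))\); and reconstruct \(f(p)\) as the full projection over \(\psi(\supp p)\) with class \(\varphi_{\supp p}[p]\), uniqueness being supplied by stable weak cancellation. The derivation of additivity of \(\psi\) from additivity of the \(\varphi_I\) and the naturality squares also matches the paper. So the comparison reduces to the one step you flag yourself.

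That step --- your ``realization lemma'' --- is a genuine gap: it is asserted, and the sketch offered for it would not go through. To enlarge \(\supp q_0\) to all of \(J\) ``while leaving the \(\K_0\)-class of \(q_0\) unchanged'' you must adjoin a projection \(r\) with \([r]=0\) in \(\K_0(J)\) and \(\supp q_0+\supp r=J\); such an \(r\) need not exist. If the quotient \(J/\supp q_0\) is stably finite with a faithful trace, then \([r]=0\) in \(\K_0(J)\) forces the image of \(r\) in that quotient to vanish, hence \(\supp r\subseteq\supp q_0\) and no enlargement is possible --- in that situation one must instead argue that \([q_0]\) could not have been an order unit to begin with, and that argument (which is where the content lies) is missing. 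The paper reaches this point differently: by its Lemma~\ref{lem:full_order_unit}, under stable weak cancellation a class in \(\K_0^+(J)\) is an order unit if and only if its representatives are full, so in the paper's proof one simply takes \emph{any} representative of \(\varphi_I[p]\) and there is nothing to realize. Note, however, that this is strictly stronger than what your argument needs (existence of \emph{one} full representative), and the distinction is real: for \(B=\C\oplus\mathcal O_2\), which has stable weak cancellation, the class \(1\in\K_0^+(B)\cong\N\) is an order unit represented both by the full projection \(1_\C\oplus q\) for any nonzero projection \(q\in\mathcal O_2\) and by the non-full projection \(1_\C\oplus 0\). So your instinct that a careful existence statement is required here is sound, but as written the converse direction of your bijection rests on an unproved claim whose proposed proof mechanism fails; you should either reduce it to Lemma~\ref{lem:full_order_unit} or supply a complete argument for existence of a full representative.
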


A monoid isomorphism clearly preserves order units.  Hence
Theorem~\ref{the:prop_vs_K0plus} implies that
\(\prop(A) \cong \prop(B)\) if and only if there are an isomorphism
of partially ordered sets
\(\psi\colon \Idealsc(A) \congto \Idealsc(B)\) and an isomorphism
\(\K_0^+(A,\Idealsc) \cong \psi^* \bigl(\K_0^+(B, \Idealsc) \bigr)\)
as diagrams
of monoids over~\(\Idealsc(A)\).  This is what was meant by saying
that \(\prop(A)\) and the diagram \(\K_0^+(I)_{I\in\Idealsc(A)}\)
contain equivalent information.

We prepare the proof of Theorem~\ref{the:prop_vs_K0plus} by two
lemmas.

\begin{lemma}
  \label{lem:full_order_unit}
  Let~\(B\)
  be a \Cstar{}algebra,
  \(I\in\Idealsc(B)\),
  and \(p\in\prop(I)\).
  Then \(\supp p = I\)
  if and only if~\(p\)
  is an order unit in~\(\prop(I)\).
  When~\(B\)
  has stable weak cancellation, this happens if and only if the image
  of~\(p\) in~\(\K_0^+(I)\) is an order unit.
\end{lemma}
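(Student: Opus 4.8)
The statement has three equivalent conditions to connect. I would organize the proof around the chain: (i) $\supp p = I$; (ii) $p$ is an order unit in $\prop(I)$; (iii) the image of $p$ in $\K_0^+(I)$ is an order unit (this last one under the stable weak cancellation hypothesis). The plan is to prove (i)$\Leftrightarrow$(ii) in general, and then (ii)$\Leftrightarrow$(iii) using stable weak cancellation.

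For (i)$\Rightarrow$(ii): suppose $\supp p = I$. Given any $q \in \prop(I)$, I must find $n$ with $q \le n\cdot p$ in $\prop(I)$. By Theorem~\ref{the:prop_CB} we may regard $p, q$ as projections in $\Mat_\infty(I)$, and $q \le n\cdot p$ means $q$ is equivalent to a subprojection of $p^{\oplus n}$. Since $\supp p = I$ and $q \in \Mat_\infty(I) = \Mat_\infty(\supp p)$, the projection $p$ is full in $\Mat_\infty(I)$; fullness of $p$ together with $q$ being a projection over the same algebra gives, by a standard argument (approximating $q$ by elements in the algebraic ideal generated by $p$ and using a finite-rank / polar-decomposition argument as in the proof of Theorem~\ref{the:prop_CB}), that $q$ is dominated by some multiple of $p$. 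For (ii)$\Rightarrow$(i): if $p$ is an order unit in $\prop(I)$, then in particular $\supp p \supseteq \supp q$ for every $q \in \prop(I)$ by monotonicity~\eqref{eq:supp_monotone} and additivity~\eqref{eq:supp_additive} of $\supp$ (since $q \le n\cdot p$ gives $\supp q \subseteq \supp(n\cdot p) = \supp p$). Because $I \in \Idealsc(B)$, the ideal $I$ is itself $\supp p_0$ for some projection $p_0 \in \prop(B)$, which then lies in $\prop(I)$ and is full there; applying the order-unit property to $q = p_0$ yields $I = \supp p_0 \subseteq \supp p \subseteq I$, hence $\supp p = I$.

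For the equivalence with (iii) under stable weak cancellation: the natural map $\prop(I) \to \K_0^+(I)$ is additive and monotone (respects $\le$), so if $p$ is an order unit in $\prop(I)$ its image is an order unit in $\K_0^+(I)$ — this direction needs no cancellation hypothesis. Conversely, suppose $[p]$ is an order unit in $\K_0^+(I)$. Given $q \in \prop(I)$, there is $n$ with $[q] \le n\cdot[p]$ in $\K_0^+(I)$, i.e.\ $[p^{\oplus n}] = [q] + [s]$ for some $s$, which after adding a common projection $r \in \Mat_\infty(I)$ means $p^{\oplus n} \oplus r \sim q \oplus s \oplus r$. I then need to upgrade this $\K_0$-level domination to a genuine domination $q \le m\cdot p$ in $\prop(I)$. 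The idea is that $r \in \Mat_\infty(\supp p^{\oplus n})= \Mat_\infty(\supp p)$ (using that $\supp p = I$, which we should first establish from $[p]$ being an order unit — indeed if $[p]$ is an order unit then $[p_0] \le n[p]$ for the full $p_0$ above, forcing $\supp p_0 \subseteq \supp p$, so $\supp p = I$ again), and then absorb $r$ into a further multiple of $p$: since $\supp p = I$, by the already-proven (i)$\Rightarrow$(ii) direction $p$ is an order unit in $\prop(I)$, so $r \le k\cdot p$ for some $k$, giving $q \le q\oplus s \le p^{\oplus n}$ after cancelling $r$ at the $\K_0$ level and using stable weak cancellation to pass from stable equivalence back to equivalence of the relevant subprojections. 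The cleanest route is: $[p]$ order unit $\Rightarrow \supp p = I$ (shown above) $\Rightarrow p$ order unit in $\prop(I)$ by (i)$\Rightarrow$(ii); so in fact stable weak cancellation is only needed to see that condition (iii) already implies (i).

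The main obstacle I anticipate is the purely operator-algebraic step in (i)$\Rightarrow$(ii): showing that a full projection $p$ over a (possibly nonunital) ideal $I$ dominates every other projection $q \in \Mat_\infty(I)$ up to a finite multiple. This is a nonunital, non-simple analogue of the elementary fact for unital or simple $\Cst$-algebras, and the care is in handling the approximation $q \approx \sum b_i^* p b_i$ in the algebraic ideal generated by $p$ and then converting a norm-small perturbation into an honest Murray--von Neumann domination via a polar-decomposition/complementability argument exactly as in the proof of Theorem~\ref{the:prop_CB}. Everything else (monotonicity of $\supp$ and of $\prop(I)\to\K_0^+(I)$, the role of $I \in \Idealsc(B)$ providing a distinguished full $p_0$, and the single invocation of stable weak cancellation) is routine bookkeeping.
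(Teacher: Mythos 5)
Your skeleton --- proving (ii)\(\Rightarrow\)(i) by testing the order unit against a fixed full projection \(p_0\in\prop(B)\) with \(\supp p_0=I\), and (ii)\(\Rightarrow\)(iii) by surjectivity of \(\prop(I)\to\K_0^+(I)\) --- coincides with the paper's. The one place where you genuinely diverge is the direction \(\supp p=I\Rightarrow p\) is an order unit. You propose the classical ``full projections dominate'' argument: approximate \(x\) by sums \(\sum b_i^*pb_i\) and convert a norm-small perturbation into Murray--von Neumann domination. The paper instead observes that \(\Hilm\defeq p\cdot I^\infty\) is an imprimitivity bimodule between \(\Comp(\Hilm)\) and \(I\) (this is exactly fullness of \(p\)), tensors \(x\cdot I^\infty\) with the dual module \(\Hilm^*\) to obtain a Hilbert \(\Comp(\Hilm)\)-module that is proper because \(\Comp(x\cdot I^\infty)\) is unital, and then quotes Theorem~\ref{the:prop_CB} to realise it as a direct summand of \(\Comp(\Hilm)^n\); undoing the Morita equivalence gives \(x\le n\cdot p\). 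Both routes are viable, but the paper's buys the approximation and polar-decomposition work for free from Theorem~\ref{the:prop_CB}, whereas in your version all of that work sits inside the phrase ``a standard argument'' and would have to be carried out in the nonunital matrix setting.

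The genuine gap is in your treatment of the converse \(\K_0^+\) direction. Your preferred route is: \([p]\) an order unit in \(\K_0^+(I)\) implies \(\supp p=I\), because ``\([p_0]\le n[p]\) forces \(\supp p_0\subseteq\supp p\)''; then conclude by (i)\(\Rightarrow\)(ii). That inference does not follow: unwinding \([p_0]\le n[p]\) gives \(p_0\oplus y\oplus r\sim (n\cdot p)\oplus r\) for some auxiliary projection \(r\in\Mat_\infty(I)\), and taking supports only yields \(I=\supp p_0\subseteq\supp p+\supp r\), from which \(\supp r\) cannot be discarded. (For instance, \(\K_0^+(\mathcal O_2\oplus\mathcal O_2)\) is trivial, so inequalities between classes there carry no support information whatsoever.) A tell-tale sign is that your route never invokes stable weak cancellation, even though the lemma imposes that hypothesis precisely for this equivalence. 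The paper argues differently: for \(x\in\prop(I)\) it writes \([x\oplus q]\oplus[y]=n[p]\) with \(q\) the fixed full projection, and applies stable weak cancellation to upgrade this \(\K_0\)-identity to an honest equivalence \(x\oplus q\oplus y\sim n\cdot p\), whence \(x\le n\cdot p\) directly; the cancellation hypothesis is what replaces the support bookkeeping you attempt. Your earlier, discarded sketch --- add a common projection, then cancel it using stable weak cancellation --- is the one pointing in the paper's direction and is what should be developed.
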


\begin{proof}
  Since \(I\in\Idealsc(B)\),
  there is \(q\in\prop(B)\)
  with \(I = \supp q\).  We fix such a~\(q\) throughout the proof.

  Assume that~\(p\)
  is an order unit in~\(\prop(I)\).
  Then \(q \le n\cdot p\)
  for some \(n\in\N_{>0}\)
  and hence \(I = \supp q \subseteq n\cdot \supp p = \supp p\).
  Then \(I = \supp p\).
  Conversely, let \(\supp p= I\).
  Let \(x\in \prop(I)\).
  We must show that \(x \le n\cdot p\)
  for some \(n\in\N_{>0}\).
  Equivalently, \(x\cdot I^\infty\)
  is a direct summand in \((p\cdot I^\infty)^n\)
  as a Hilbert \(I\)\nb-module.

  Let \(\Hilm \defeq p\cdot I^\infty\)
  as a right Hilbert \(I\)\nb-module.
  Then \(\Comp(\Hilm) \cong p(I\otimes \Comp)p\).
  Let~\(\Hilm^*\)
  be the dual Hilbert \(I,\Comp(\Hilm)\)-bimodule.
  Then \(\Hilm^* \otimes_{\Comp(\Hilm)} \Hilm \cong I\)
  and \(\Hilm \otimes_I \Hilm^* \cong \Comp(\Hilm)\).
  So \(x\cdot I^\infty\)
  is a direct summand in \((p\cdot I^\infty)^n\)
  if and only if \(x\cdot I^\infty \otimes_I \Hilm^*\)
  is a direct summand in
  \((p\cdot I^\infty)^n \otimes_I \Hilm^* = \Comp(\Hilm)^n\)
  as a Hilbert \(\Comp(\Hilm)\)-module.
  The \Cstar{}algebra
  \(\Comp(x\cdot I^\infty\otimes_I \Hilm^*)\)
  is isomorphic to \(\Comp(x\cdot I^\infty)\)
  and hence still unital.  So \(x\cdot I^\infty\otimes_I \Hilm^*\)
  is a proper Hilbert \(\Comp(\Hilm)\)-module.
  By Theorem~\ref{the:prop_CB}, \(x\cdot I^\infty\otimes_I \Hilm^*\)
  is a direct summand in \((p\cdot I^\infty)^n\)
  for some \(n\in\N_{>0}\).
  This finishes the proof that \(\supp p=I\) if and only if~\(p\)
  is an order unit in~\(\prop(I)\).

  Let~\(p\)
  be an order unit in~\(\prop(I)\).
  If \(x\in\K_0^+(I)\),
  then \(x=[\bar{x}]\)
  for some \(\bar{x}\in\prop(I)\).
  By assumption, \(\bar{x} \le n\cdot p\)
  for some \(n\in\N_{>0}\).
  Hence \(x \le n\cdot [p]\).
  Thus \([p]\in \K_0^+(I)\)
  is an order unit in~\(\K_0^+(I)\).
  Conversely, let \([p]\in \K_0^+(I)\)
  be an order unit.  Let \(x\in\prop(I)\).
  Then \(x\oplus q\)
  is full in~\(I\).
  There is \(n\in\N_{>0}\)
  with \([x\oplus q] \le n\cdot [p]\)
  in~\(\K_0^+(I)\),
  that is, \([x\oplus q] \oplus [y] = n\cdot [p]\)
  for some \(y\in \prop(I)\).
  Since~\(B\)
  has stable weak cancellation and both sides are full in~\(I\),
  this implies \(x\oplus q \oplus y \sim n\cdot p\)
  in \(\prop(I)\).
  So \(x \le n\cdot p\).
  Thus~\(p\) is also an order unit in~\(\prop(I)\).
\end{proof}

\begin{definition}
  \label{def:supp_relation}
  For \(p,q\in \prop(B)\), we write \(p\prec q\) if and only if
  \(\supp p \subseteq \supp q\).
\end{definition}

\begin{lemma}
  \label{lem:supp_relation}
  Let \(p,q\in \prop(B)\).
  Then \(p\prec q\)
  if and only if \(p \le n\cdot q\)
  for some \(n\in\N\).  
\end{lemma}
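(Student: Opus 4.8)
The plan is to reduce to the ideal \(I\defeq\supp q\) and invoke Lemma~\ref{lem:full_order_unit}.

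One direction is immediate. If \(p\le n\cdot q\) in \(\prop(B)\) for some \(n\in\N\), then monotonicity of \(\supp\) (see~\eqref{eq:supp_monotone}) gives \(\supp p\subseteq\supp(n\cdot q)\), while additivity~\eqref{eq:supp_additive} gives \(\supp(n\cdot q)=\supp q\); hence \(p\prec q\).

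For the converse, I would assume \(\supp p\subseteq\supp q\) and set \(I\defeq\supp q\in\Idealsc(B)\). By construction of \(\supp q\), the projection \(q\) lies in \(\Mat_\infty(I)\) and is full there, so \(q\in\prop(I)\) with \(\supp q=I\); by the first assertion of Lemma~\ref{lem:full_order_unit} (which needs no cancellation hypothesis) \(q\) is therefore an order unit in \(\prop(I)\). Likewise \(\supp p\subseteq I\) forces every matrix entry of \(p\) into \(I\), so that \(p\in\Mat_\infty(I)\) and hence \(p\in\prop(I)\) by Theorem~\ref{the:prop_CB}. The order-unit property of \(q\) in \(\prop(I)\) then yields some \(n\in\N\) with \(p\le n\cdot q\) in \(\prop(I)\).

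Finally I would transport this relation back to \(\prop(B)\): a witnessing equality \(p\oplus p'\sim n\cdot q\) with \(p'\in\prop(I)\) takes place among projections in \(\Mat_\infty(I)\subseteq\Mat_\infty(B)\), and a Murray--von Neumann equivalence over \(I\) is in particular one over \(B\), so \(p\le n\cdot q\) already holds in \(\prop(B)\). The only substantive ingredient here is Lemma~\ref{lem:full_order_unit}; everything else is bookkeeping about passing between \(I\) and \(B\), where the single point worth a moment's care is checking that \(p\) and \(q\) really define elements of \(\prop(I)\).
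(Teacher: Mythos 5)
Your proof is correct and follows essentially the same route as the paper: the forward direction by monotonicity and additivity of \(\supp\), and the converse by applying the first (cancellation-free) assertion of Lemma~\ref{lem:full_order_unit} to see that \(q\) is an order unit in \(\prop(\supp q)\). The extra bookkeeping about \(p,q\in\prop(I)\) and transporting the relation back to \(\prop(B)\) is left implicit in the paper but is exactly what is needed.
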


\begin{proof}
  If \(p \le n\cdot q\),
  then \(\supp p \subseteq \supp n\cdot q = \supp q\).
  Conversely, let \(\supp p \subseteq \supp q\).
  By Lemma~\ref{lem:full_order_unit}, \(q\)
  is an order unit for \(\prop(\supp q)\).
  Hence \(p\le n\cdot q\) for some \(n\in\N\).
\end{proof}

\begin{proof}[Proof of Theorem~\textup{\ref{the:prop_vs_K0plus}}]
  Let \(f\colon \prop(A) \to \prop(B)\) be a homomorphism.  The map
  \(\supp\colon \prop(A) \to (\Idealsc(A),+)\) is a surjective
  monoid homomorphism.  Hence~\(\Idealsc(A)\) is isomorphic to the
  quotient of~\(\prop(A)\) by the congruence relation defined by
  \(\supp p = \supp q\).  Lemma~\ref{lem:supp_relation} describes
  this congruence relation and also the partial order defined by
  \(\supp p \subseteq \supp q\) in a purely algebraic way.  So
  \(\supp p \subseteq \supp q\) implies
  \(\supp f(p) \subseteq \supp f(q)\) and similarly with equality
  instead of inclusion.  As a result, \(f\) induces a well defined
  map \(\psi\colon \Idealsc(A) \to \Idealsc(B)\) that is both
  additive and monotone.

  Let \(I\in\Idealsc(A)\).  Then there is \(q\in\prop(A)\) with
  \(I= \supp q\).  Then \(\psi(I) = \supp f(q)\).
  Lemma~\ref{lem:supp_relation} shows that \(p\in\prop(A)\)
  satisfies \(p\in \prop(I)\) if and only if there is \(n\in\N\)
  with \(p \le n\cdot q\).  Hence~\(f\) maps \(\prop(I)\) to
  \(\prop(\psi(I)))\).  This induces a group homomorphism
  \(\K_0(I) \to \K_0(\psi(I))\), which restricts to a monoid
  homomorphism \(\varphi_I\colon \K_0^+(I) \to \K_0^+(\psi(I))\).
  It maps the order unit~\(q\) in \(\prop(I)\) to \(f(q)\), which is
  an order unit in \(\prop(\psi(I))\) by
  Lemma~\ref{lem:full_order_unit} because \(\psi(I) = \supp f(q)\).
  These homomorphisms for \(I\in\Idealsc(A)\) form a morphism of
  diagrams over~\(\Idealsc(A)\) that preserves order units.

  Conversely, let \(\psi\colon \Idealsc(A) \to \Idealsc(B)\) be a
  monotone map and let
  \(\varphi_I\colon \K_0^+(I) \to \K_0^+(\psi(I))\) for
  \(I\in\Idealsc(A)\) be monoid homomorphisms that preserve order
  units and form a morphism of diagrams.  Assume that~\(B\) has
  stable weak cancellation.  Let \(p\in \prop(A)\).  Let
  \(I\defeq \supp p\).  Choose \(q\in \prop(\psi(I))\)
  representing~\(\varphi_I[p]\) in \(\K_0^+(\psi(I))\).
  Since~\(\varphi_I\) preserves order units, \(\supp q = \psi(I)\).
  Since~\(B\) has stable weak cancellation, \(q\) is unique as an
  element of \(\prop(B)\).  So \(f(p) \defeq q\) well-defines a map
  \(f\colon \prop(A)\to \prop(B)\).

  Let \(p_1,p_2\in\prop(A)\).  We claim that
  \(f(p_1+p_2) = f(p_1)+f(p_2)\).  Let \(p_3\defeq p_1+p_2\).  For
  \(j=1,2,3\), define \(I_j \defeq \supp p_j\).  By
  Lemma~\ref{lem:full_order_unit}, \([p_j]\in \K_0^+(I_j)\) is an
  order unit.  Therefore,
  \(\varphi_{I_j}[p_j] \in \K_0^+(\psi(I_j))\) is an order unit.
  Since~\(B\) has stable weak cancellation,
  \(q_j \defeq f(p_j)\in \prop(B)\) is the unique element with
  support ideal~\(\psi(I_j)\) and \(\varphi_{I_j}[p_j] = [q_j]\) in
  \(\K_0^+(\psi(I_j))\).  The relation \(p_1+ p_2 = p_3\) holds in
  \(\K_0^+(I_3)\) when we tacitly view \(p_1,p_2,p_3\) as elements
  of \(\prop(I_3)\).  Since \(\varphi_{I_3}\) is additive, this
  implies the relation \([q_1+q_2] = [q_1]+[q_2] = [q_3]\) in
  \(\K_0^+(\psi(I_3))\).  Since the maps~\(\varphi_I\) preserve
  order units, \([q_1+q_2] = [q_3]\) is an order unit for
  \(\K_0^+(\psi(I_3))\).  Then Lemma~\ref{lem:full_order_unit}
  implies that \(\psi(I_3)\) is the support ideal of \(q_3\) and of
  \(q_1+q_2\).  The support of \(q_1+q_2\) is contained in
  \(\psi(I_1) + \psi(I_2)\).  So
  \(\psi(I_3) \subseteq \psi(I_1) + \psi(I_2)\).  Since~\(\psi\) is
  monotone by assumption, it follows that~\(\psi\) must be additive.
  And stable weak cancellation for~\(B\) shows that
  \(q_1+q_2 = q_3\) holds in \(\prop(B)\).  That is,
  \(f\colon \prop(A) \to \prop(B)\) is a homomorphism.  It is easy
  to see that the two constructions above are inverse to each
  other, giving the asserted bijection.
\end{proof}

\begin{corollary}
  \label{cor:Hom_prop_purely_infinite}
  Let \(A\)
  and~\(B\)
  be \Cstar{}algebras.
  Assume that~\(B\)
  is purely infinite.  There is a natural bijection
  between homomorphisms \(f\colon \prop(A) \to \prop(B)\)
  and pairs \((\psi,(\varphi_I)_{I\in\Idealsc(A)})\)
  with a monotone map \(\psi\colon \Idealsc(A) \to \Idealsc(B)\)
  and a morphism of diagrams of Abelian groups
  \(\varphi_I\colon\K_0(A,\Idealsc) \to \psi^*\K_0(B,\Idealsc)\).
\end{corollary}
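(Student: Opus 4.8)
The plan is to reduce the statement to Theorem~\ref{the:prop_vs_K0plus} by checking that the positivity and order-unit conditions appearing there become vacuous when~\(B\) is purely infinite. First, by Theorem~\ref{the:prop_for_pi} a purely infinite \Cstar{}algebra has stable weak cancellation, so Theorem~\ref{the:prop_vs_K0plus} applies: monoid homomorphisms \(f\colon\prop(A)\to\prop(B)\) correspond naturally to pairs \((\psi,(\varphi_I)_{I\in\Idealsc(A)})\) with \(\psi\colon\Idealsc(A)\to\Idealsc(B)\) monotone and \(\varphi\colon\K_0^+(A,\Idealsc)\to\psi^*\K_0^+(B,\Idealsc)\) an order-unit-preserving morphism of monoid diagrams. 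What remains is to identify, for a fixed monotone~\(\psi\), these \(\varphi\) with arbitrary morphisms of diagrams of Abelian groups \(\K_0(A,\Idealsc)\to\psi^*\K_0(B,\Idealsc)\).

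The crucial point is that \(\K_0^+(I)=\K_0(I)\) for every purely infinite \Cstar{}algebra~\(I\). Indeed, any element of \(\K_0(I)\) has the form \([p]-[r]\) for projections \(p,r\in\Mat_\infty(I)\); since pure infiniteness passes to matrix amplifications, a projection~\(r\) there is properly infinite, hence dominates two mutually orthogonal subprojections \(r_1,r_2\) with \(r_1\sim r_2\sim r\), and then \(s\defeq r-(r_1+r_2)\) is a projection with class \([r]-2[r]=-[r]\), which shows \(-[r]\in\K_0^+(I)\); consequently \([p]-[r]=[p]+[s]=[p\oplus s]\in\K_0^+(I)\). Ideals of a purely infinite \Cstar{}algebra are again purely infinite, so this applies to every \(I\in\Idealsc(B)\), in particular to each \(\psi(I)\); thus \(\psi^*\K_0^+(B,\Idealsc)\) and \(\psi^*\K_0(B,\Idealsc)\) are literally the same diagram. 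Moreover, in a group regarded as a commutative monoid every element is an order unit, so the requirement that \(\varphi\) preserve order units is automatically fulfilled, as is the requirement that \(\varphi_I\) map \(\K_0^+(I)\) into \(\K_0^+(\psi(I))\). Hence, for fixed~\(\psi\), the data of Theorem~\ref{the:prop_vs_K0plus} is precisely a morphism of monoid diagrams \(\K_0^+(A,\Idealsc)\to\psi^*\K_0(B,\Idealsc)\).

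Finally I would use that \(\K_0(I)\) is the Grothendieck group of the monoid \(\prop(I)\) of Murray--von Neumann classes of projections over~\(I\), hence of its image \(\K_0^+(I)\subseteq\K_0(I)\): being a submonoid of the group \(\K_0(I)\) that generates it, \(\K_0^+(I)\) has \(\K_0(I)\) as its Grothendieck group. Therefore every monoid homomorphism from \(\K_0^+(I)\) to an Abelian group extends uniquely to a group homomorphism on \(\K_0(I)\), and by this uniqueness such extensions are automatically compatible with the connecting maps of the diagrams; conversely, restricting a morphism of diagrams of Abelian groups to the positive cones recovers the morphism of monoid diagrams. This yields a bijection between morphisms of monoid diagrams \(\K_0^+(A,\Idealsc)\to\psi^*\K_0(B,\Idealsc)\) and morphisms of diagrams of Abelian groups \(\K_0(A,\Idealsc)\to\psi^*\K_0(B,\Idealsc)\). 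Composing it with the bijection of Theorem~\ref{the:prop_vs_K0plus} gives the asserted natural bijection (naturality being inherited from that theorem). The only ingredient here that is not purely formal is the identity \(\K_0^+=\K_0\) for purely infinite \Cstar{}algebras together with the permanence of pure infiniteness under passing to ideals and matrix amplifications; for these I would cite the standard structure theory, e.g.~\cite{Rordam:Classification_survey}.
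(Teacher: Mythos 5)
Your proposal is correct and follows essentially the same route as the paper: stable weak cancellation via Theorem~\ref{the:prop_for_pi}, the identification \(\K_0^+=\K_0\) for purely infinite ideals (which you prove directly via properly infinite projections where the paper cites R{\o}rdam), the observation that order units are automatic in a group, and the Grothendieck-group argument converting monoid morphisms on \(\K_0^+\) into group morphisms on \(\K_0\). No gaps; the extra care you take with compatibility of the extensions with the connecting maps of the diagrams is a welcome, if minor, elaboration.
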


\begin{proof}
  Since any \(I\in\Idealsc(B)\)
  is also purely infinite, any class in~\(\K_0(I)\)
  is represented by a projection in~\(I\)
  (see~\cite{Rordam:Classification_survey}*{Theorem~4.1.4}).  Thus
  \(\K_0^+(B,\Idealsc) = \K_0(B,\Idealsc)\),
  which is a diagram of Abelian groups.  Any element of an Abelian
  group is an order unit.  Theorem~\ref{the:prop_for_pi} says
  that~\(B\)
  has stable weak cancellation.  So Theorem~\ref{the:prop_vs_K0plus}
  applies here and gives the remaining statements with~\(\K_0^+\)
  instead of~\(\K_0\).
  If \(I\in\Idealsc(A)\),
  then \(\K_0(I)\)
  is isomorphic to the Grothendieck group of~\(\K_0^+(I)\).
  Hence there is a bijection between homomorphisms
  \(\K_0^+(I) \to \K_0^+(J)\)
  and \(\K_0(I) \to \K_0(J)\)
  for \(I\in\Idealsc(A)\) and \(J\in\Idealsc(B)\).
\end{proof}

\begin{remark}
  \label{rem:Idealsc_infty}
  If \(I\in\Idealsc^\infty(B)\),
  then the set of \(J\in\Idealsc(B)\)
  with \(J\subseteq I\)
  is directed, and the closure of its union is equal to~\(I\).
  If \(I\in\Idealsc(B)\),
  then this directed set has~\(I\)
  as a maximal element.  Therefore, any monoid homomorphism
  \(\Idealsc(A) \to \Idealsc(B)\)
  extends uniquely to a monoid homomorphism
  \(\Idealsc^\infty(A) \to \Idealsc^\infty(B)\)
  that commutes also with infinite suprema.  We also have
  \(\K_0^+(I) = \varinjlim \K_0^+(J)\)
  and \(\K_i(I) = \varinjlim \K_i(J)\)
  for \(i=1,2\),
  where~\(J\)
  runs through the directed set of all \(J\in\Idealsc(B)\)
  with \(J\subseteq I\).
  Thus a diagram morphism
  \(\K_0^+(A,\Idealsc) \to \psi^*\K_0^+(B,\Idealsc)\)
  or \(\K_i(A,\Idealsc) \to \psi^*\K_i(B,\Idealsc)\),
  \(i=0,1\),
  extends uniquely to a diagram morphism
  \(\K_0^+(A,\Idealsc^\infty) \to \psi^*\K_0^+(B,\Idealsc^\infty)\)
  or \(\K_i(A,\Idealsc^\infty) \to \psi^*\K_i(B,\Idealsc^\infty)\).
  Hence we may replace the indexing set \(\Idealsc(A)\)
  by \(\Idealsc^\infty(A)\)
  in Theorem~\ref{the:prop_vs_K0plus} and
  Corollary~\ref{cor:Hom_prop_purely_infinite} provided we add the
  condition that the map on~\(\Idealsc^\infty\) preserves infinite
  suprema.  The condition in
  Theorem~\ref{the:prop_vs_K0plus} that the map
  \(\varphi_I\colon \K_0^+(I) \to \K_0^+(\psi(I))\)
  preserves order units is empty for
  \(I\in\Idealsc^\infty(A) \setminus \Idealsc(A)\)
  because~\(\K_0^+(I)\) has no order unit in this case.
\end{remark}

\section{Some homological algebra with K-theory diagrams}
\label{sec:diagrams}

We first recall well known results on the ideal structure of
graph \Cstar{}algebras
and about their \(\K\)\nb-theory
diagrams \(\K_0(\Cst(G),\Idealsc)\),
\(\K_1(\Cst(G),\Idealsc)\)
and \(\K_0^+(\Cst(G),\Idealsc)\).

\begin{definition}
  A subset \(H\subseteq V\) is \emph{hereditary} if for any edge
  \(e\in E\), \(r(e)\in H\) implies \(s(e) \in H\).  It is
  \emph{saturated} if any \(v\in V_\reg\) with \(s(e) \in H\) for
  all edges~\(e\) with \(r(e)=v\) satisfies \(v\in H\).
\end{definition}

The intersection of any set of hereditary and saturated subsets is
again hereditary and saturated.  The union may fail to be hereditary
and saturated, but then there is a smallest hereditary and saturated
subset containing it.  Thus the hereditary and saturated subsets
of~\(V\) form a complete lattice, which we denote by \(\Ideals(G)\).

\begin{definition}
  For a subset \(V_2\subseteq V\), let
  \(\langle V_2\rangle\subseteq V\) be the smallest hereditary and
  saturated subset containing~\(V_2\).  If \(V_2=\{v\}\), we briefly
  denote this by~\(\langle v\rangle\).
\end{definition}

There is an order-isomorphism between \(\Ideals(G)\) and the lattice
\(\Ideals^\T(\Cst(G))\) of gauge-invariant ideals in~\(\Cst(G)\)
because~\(G\) is row-finite (see
\cite{Bates-Pask-Raeburn-Szymanski:Row_finite}*{Thereom~4.1} or
\cite{Bates-Hong-Raeburn-Szymanski:Ideal_structure}*{Theorem~3.6}).
It maps a hereditary and saturated subset \(H\subseteq V\) to the
ideal generated by the projections~\(p_v\) for all \(v\in H\).
Its inverse maps a gauge-invariant ideal~\(I\) to the set of
\(v\in V\) with \(p_v\in I\).  We write \(\Cst(G)_H\) for the ideal
corresponding to \(H\in\Ideals(G)\).  It is Morita--Rieffel
equivalent to the graph \Cstar{}algebra of the restriction~\(G|_H\)
of~\(G\) to~\(H\).

Write \(q\in\prop(\Cst(G))\) as \(\sum_{v\in V} c_v p_v\) as in
Theorem~\ref{the:prop_graph}.  Then \(\supp q\) is equal to the
ideal generated by the projections~\(p_v\) with \(c_v\neq0\).  An
ideal of this form is automatically gauge-invariant.  The
hereditary, saturated subset of~\(V\) that corresponds to
\(\supp q\) is the smallest one that contains all~\(v\) with
\(c_v\neq0\).  It is called the (hereditary, saturated)
\emph{support} of~\(c\).  The following theorem summarises the
findings:

\begin{theorem}
  \label{the:graph_Idealsc}
  Let~\(G\) be a graph.  An ideal of~\(\Cst(G)\) belongs to
  \(\Idealsc^\infty(\Cst(G))\) if and only if it is gauge-invariant,
  and \(\Idealsc^\infty(\Cst(G))\cong\Ideals(G)\) as complete
  lattices.  This isomorphism maps the subset \(\Idealsc(\Cst(G))\)
  onto the partially ordered set of \emph{finitely generated}
  hereditary, saturated subsets of~\(V\); we denote this
  by~\(\Idealsc(G)\).  The support map
  \(\supp \colon \prop(\Cst(G)) \to \Idealsc(\Cst(G)) \cong
  \Idealsc(G)\) maps the class of the projection \(\sum c_v p_v\) for
  \(c\in \N[V]\) to the hereditary, saturated support of~\(c\).
\end{theorem}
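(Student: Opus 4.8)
The plan is to assemble Theorem~\ref{the:graph_Idealsc} from three ingredients: the description of gauge-invariant ideals via hereditary and saturated subsets recalled above, Lemma~\ref{lem:ideal_gen_by_projection} which characterizes $\Idealsc$ inside $\Idealsc^\infty$, and Theorem~\ref{the:prop_graph} which says every projection in $\Mat_\infty(\Cst(G))$ is equivalent to a vertex-projection sum $\sum c_v p_v$. The first task is to identify $\Idealsc^\infty(\Cst(G))$ with $\Ideals^\T(\Cst(G))\cong\Ideals(G)$. By Theorem~\ref{the:prop_graph}, any projection in $\Mat_\infty(\Cst(G))$ is equivalent to $\sum_{v\in V} c_v p_v$, so the ideal it generates is the ideal generated by $\{p_v : c_v\neq 0\}$, which is gauge-invariant. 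Hence any ideal generated by a \emph{set} of projections is a sum (hence supremum) of gauge-invariant ideals, and the lattice of gauge-invariant ideals is closed under arbitrary suprema (it is complete, being isomorphic to $\Ideals(G)$). So $\Idealsc^\infty(\Cst(G))\subseteq\Ideals^\T(\Cst(G))$. Conversely, a gauge-invariant ideal $\Cst(G)_H$ is by definition generated by $\{p_v : v\in H\}$, so it lies in $\Idealsc^\infty(\Cst(G))$. This gives the order-isomorphism $\Idealsc^\infty(\Cst(G))\cong\Ideals^\T(\Cst(G))\cong\Ideals(G)$, and both sides are complete lattices.

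Next I would pin down which of these ideals lie in $\Idealsc(\Cst(G))$, i.e.\ are generated by a \emph{single} projection. One inclusion: if $H$ is finitely generated as a hereditary, saturated subset, say $H = \langle v_1,\dots,v_k\rangle$, then $\Cst(G)_H$ is the ideal generated by $p_{v_1},\dots,p_{v_k}$ (since passing to the hereditary, saturated closure does not enlarge the generated ideal — the extra vertices' projections already lie in the ideal), hence by the single projection $\bigoplus_{i} p_{v_i}\in\Mat_k(\Cst(G))$; so $\Cst(G)_H\in\Idealsc(\Cst(G))$. For the converse, suppose $I = \supp p$ for some projection $p\in\Mat_\infty(\Cst(G))$. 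Write $p\sim\sum_{v\in V} c_v p_v$ with $c\in\N[V]$; then $I$ is generated by the finite set $\{p_v : c_v\neq 0\}$, so the corresponding hereditary, saturated subset $H$ is $\langle\, \{v : c_v\neq 0\}\,\rangle$, which is finitely generated. This establishes that the order-isomorphism restricts to a bijection between $\Idealsc(\Cst(G))$ and the poset $\Idealsc(G)$ of finitely generated hereditary, saturated subsets of $V$. (One can alternatively phrase this via Lemma~\ref{lem:ideal_gen_by_projection}, noting $\widehat{\Cst(G)_H}$ is quasi-compact iff $H$ is finitely generated, but the direct argument via Theorem~\ref{the:prop_graph} is cleaner and self-contained.) The last sentence of the theorem — that $\supp$ sends the class of $\sum c_v p_v$ to the hereditary, saturated support of $c$ — is then immediate: it is exactly the computation just performed, and it was already spelled out in the paragraph preceding the theorem statement, so only a pointer is needed.

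The main obstacle, such as it is, is the careful bookkeeping in the equivalence $\Idealsc^\infty(\Cst(G)) = \Ideals^\T(\Cst(G))$: one must check that the supremum in $\Ideals(\Cst(G))$ of a family of gauge-invariant ideals is again gauge-invariant (this follows because $\Ideals^\T(\Cst(G))\cong\Ideals(G)$ is a complete \emph{sub}lattice, closed under the ambient suprema, which in turn follows from the explicit description of suprema of hereditary, saturated subsets), and that passing from a set of vertices to its hereditary, saturated closure does not change the generated ideal (because hereditariness and saturation are precisely the two moves that keep you inside the ideal: if $r(e)\in H$ then $t_e t_e^*\le p_{r(e)}$ forces $p_{s(e)}=t_e^* t_e$ into the ideal, and a saturation vertex $v$ has $p_v = \sum_{r(e)=v} t_e t_e^*$ lying in the ideal). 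Everything else is a direct translation between the projection picture of Theorem~\ref{the:prop_graph} and the vertex-subset picture, with no genuine difficulty.
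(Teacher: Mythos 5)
Your proposal is correct and follows essentially the same route as the paper: the theorem is stated there as a summary of the immediately preceding discussion, which combines the cited lattice isomorphism \(\Ideals(G)\cong\Ideals^\T(\Cst(G))\) with Theorem~\ref{the:prop_graph} to reduce every projection-generated ideal to one generated by finitely many vertex projections, exactly as you do. Your extra verifications (that the hereditary, saturated closure does not enlarge the generated ideal, and that closed sums of gauge-invariant ideals are gauge-invariant) are the right details to supply and are all correct.
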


The free Abelian groups \(\Z[W]\) for \(W\in\Idealsc(G)\) form a
diagram of
groups over~\(\Idealsc(G)\), which we denote by \(\Z[V,\Idealsc]\).
Similarly, let \(W_\reg \defeq W\cap V_\reg\) for \(W\in\Idealsc(G)\).
The free Abelian  groups \(\Z[W_\reg]\) for \(W\in\Idealsc(G)\) form
a diagram of
groups over~\(\Idealsc(G)\) denoted by \(\Z[V_\reg,\Idealsc]\).  We
define a morphism of diagrams \(M_G^{\Idealsc}\colon
\Z[V_\reg,\Idealsc] \to \Z[V,\Idealsc]\) to consist of the maps
\[
  M_G^W\colon \Z[W_\reg] \to \Z[W],\qquad
  M_G^W\bigl((c_w)_{w\in W}\bigr) (v) \defeq
  \sum_{\setgiven{e\in E}{\s(e)=v}} c_{\rg(e)}.
\]
These maps form a diagram morphism because if \(W\subseteq V\) is
hereditary, then \(M_G^W = M_G^V|_{\Z[W]}\).  Roughly speaking,
\(M_G^{\Idealsc}\) multiplies with the \emph{adjacency matrix}
of~\(G\), that is, with the matrix whose \(v,w\)-entry is the number
of edges \(v\leftarrow w\) in~\(G\).

The results on the \(\K\)\nb-theory
of graph \Cstar{}algebras provide natural exact sequences
\begin{equation}
  \label{eq:K_graph_exact_sequence_at_W}
  0 \to \K_1(\Cst(G|_W)) \xrightarrow{\kappa|_W}
  \Z[W_\reg] \xrightarrow{\id-M_G^W}
  \Z[W] \xrightarrow{\pi|_W}
  \K_0(\Cst(G|_W)) \to 0
\end{equation}
for all \(W\in\Idealsc^\infty(G)\)
(see
\cite{Raeburn-Szymanski:CK_algs_of_inf_graphs_and_matrices}*{Theorem~3.2}).
These are compatible with the diagram structure over~\(\Idealsc(G)\),
so that~\eqref{eq:K_graph_exact_sequence_at_W} gives an exact sequence
of diagrams
\begin{equation}
  \label{eq:K_graph_exact_sequence}
  0 \to \K_1(\Cst(G),\Idealsc) \xrightarrow{\kappa}
  \Z[V_\reg,\Idealsc] \xrightarrow{\id-M_G^{\Idealsc}}
  \Z[V,\Idealsc] \xrightarrow{\pi}
  \K_0(\Cst(G),\Idealsc) \to 0.
\end{equation}
In addition,
\(\K_0^+(\Cst(G|_W)) = \pi|_W(\N[W]) \subseteq \K_0(\Cst(G|_W))\) by
Theorem~\ref{the:prop_graph}.

Diagrams of Abelian groups over~\(\Idealsc(G)\) form an Abelian category.
This allows us to do homological algebra with them.  We shall need
the extension groups in this category, which we denote by
\(\Ext^n_{\Idealsc(G)}\) for \(n\in\N\).  The case \(n=0\) gives
\(\Ext^0=\Hom\).

\begin{proposition}
  \label{pro:ZV_projective}
  The diagrams \(\Z[V_\reg,\Idealsc]\) and \(\Z[V,\Idealsc]\) are
  projective.  So~\eqref{eq:K_graph_exact_sequence} is the beginning
  of a projective resolution of \(\K_0(\Cst(G),\Idealsc)\).
\end{proposition}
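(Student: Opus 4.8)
The plan is to exhibit each of the two diagrams as a direct sum of ``representable'' projective objects in the Abelian category of diagrams over~\(\Idealsc(G)\).  First I would record the relevant projectives: for \(W_0\in\Idealsc(G)\), let \(P_{W_0}\) be the diagram with \(P_{W_0}(W)=\Z\) for \(W\supseteq W_0\) and \(P_{W_0}(W)=0\) otherwise, all structure maps being the identity where both groups are~\(\Z\).  A natural transformation \(P_{W_0}\to X\) is determined by the image of \(1\in\Z=P_{W_0}(W_0)\) in \(X(W_0)\), and every element of \(X(W_0)\) occurs this way, so \(\Hom_{\Idealsc(G)}(P_{W_0},X)\cong X(W_0)\) naturally in~\(X\).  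Since evaluation at~\(W_0\) is exact on diagrams of Abelian groups, \(P_{W_0}\) is projective.  I would also use the standard fact that an arbitrary direct sum of projective objects is projective, which holds here because \(\Hom_{\Idealsc(G)}(\bigoplus_i P_i,-)\cong\prod_i\Hom_{\Idealsc(G)}(P_i,-)\) and a product of surjections of Abelian groups is surjective.

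The substantive step is to identify the two diagrams with such direct sums.  For every \(v\in V\) the set \(\langle v\rangle\) is hereditary and saturated and is generated by a single vertex, hence \(\langle v\rangle\in\Idealsc(G)\); likewise for \(v\in V_\reg\).  The only real point is that for \(W\in\Idealsc(G)\) one has \(v\in W\) if and only if \(\langle v\rangle\subseteq W\), which is immediate since \(\langle v\rangle\) is the smallest hereditary, saturated subset containing~\(v\).  Hence \(P_{\langle v\rangle}(W)=\Z\) precisely when \(v\in W\), and matching the structure maps — on both sides the inclusions \(\Z[W_1]\hookrightarrow\Z[W_2]\) for \(W_1\subseteq W_2\) — yields isomorphisms of diagrams
\[
  \Z[V,\Idealsc]\cong\bigoplus_{v\in V}P_{\langle v\rangle},
  \qquad
  \Z[V_\reg,\Idealsc]\cong\bigoplus_{v\in V_\reg}P_{\langle v\rangle}.
\]
By the previous paragraph these are projective, which proves the first assertion.

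For the second assertion I would appeal directly to the definition of a projective resolution.  By~\eqref{eq:K_graph_exact_sequence}, the map \(\pi\colon\Z[V,\Idealsc]\to\K_0(\Cst(G),\Idealsc)\) is surjective with kernel the image of \(\id-M_G^{\Idealsc}\), and \(\Z[V_\reg,\Idealsc]\xrightarrow{\id-M_G^{\Idealsc}}\Z[V,\Idealsc]\) maps onto that kernel; since both \(\Z[V,\Idealsc]\) and \(\Z[V_\reg,\Idealsc]\) are projective, \eqref{eq:K_graph_exact_sequence} is the beginning of a projective resolution of \(\K_0(\Cst(G),\Idealsc)\), which one continues by resolving the subobject \(\kappa(\K_1(\Cst(G),\Idealsc))=\ker(\id-M_G^{\Idealsc})\subseteq\Z[V_\reg,\Idealsc]\) by projectives (the diagrams \(P_{W_0}\) provide enough projectives).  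I do not anticipate a serious obstacle; the only points requiring a little care are that \(\langle v\rangle\) genuinely lies in the indexing poset \(\Idealsc(G)\) and that the structure maps in the displayed isomorphisms agree, both of which are routine.
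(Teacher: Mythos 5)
Your proposal is correct and is essentially the paper's own argument in slightly different packaging: the paper directly computes \(\Hom_{\Idealsc(G)}(\Z[V,\Idealsc],X)\cong\prod_{v\in V}X_{\langle v\rangle}\) and observes this functor is exact, which is exactly what your decomposition into the representable summands \(P_{\langle v\rangle}\) yields, resting on the same key point that \(v\in W\) if and only if \(\langle v\rangle\subseteq W\) for \(W\in\Idealsc(G)\).
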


\begin{proof}
  For \(v\in V\),
  let \(\langle v\rangle\subseteq V\)
  be the hereditary and saturated subset generated by~\(v\).
  So \(\langle v\rangle\in\Idealsc(G)\).
  Let \((X_W,\varphi_{W_2,W_1})_{W\in\Idealsc(G)}\)
  be a diagram of groups over~\(\Idealsc(G)\).
  A family \(x_v\in X_{\langle v\rangle}\)
  for \(v\in V\)
  defines a diagram morphism
  \(\xi\colon \Z[V,\Idealsc]\to (X_W)_{W\in\Idealsc(G)}\),
  which consists of the maps
  \[
  \xi_W\colon \Z[W] \to X_W,\qquad
  \sum_{v\in W} c_v \delta_v \mapsto \sum_{v\in W} c_v \varphi_{W,\langle v\rangle}(x_v).
  \]
  Any diagram morphism \(\xi\colon \Z[V,\Idealsc]\to
  (X_W)_{W\in\Idealsc(G)}\) is of this form for a unique family of
  \(x_v\in X_{\langle v\rangle}\) for \(v\in V\), namely, \(x_v
  \defeq \xi_{\langle v\rangle}(\delta_v)\).  So
  \[
  \Hom_{\Idealsc(G)} \bigl(\Z[V,\Idealsc], (X_W)\bigr) \cong
  \prod_{v\in W} X_{\langle v\rangle}.
  \]
  This functor is exact.  That is, \(\Z[V,\Idealsc]\) is projective.
  A similar argument works for \(\Z[V_\reg,\Idealsc]\) or, more
  generally, for the diagram \(W\mapsto \Z[W\cap X,\Idealsc]\)
  for any subset~\(X\) of~\(V\).
\end{proof}

The resolution in~\eqref{eq:K_graph_exact_sequence} allows us to compute
\(\Ext^n_{\Idealsc(G)}(\K_0(\Cst(G),\Idealsc),Y)\)
for any diagram of groups~\(Y\)
over~\(\Idealsc(G)\)
and \(n=0,1,2\) as the cohomology groups of the cochain complex
\begin{multline}
  \label{eq:compute_Ext_Idealsc}
  0 \to 
  \Hom_{\Idealsc(G)}(\Z[V,\Idealsc],Y) \xrightarrow{(\id-M_G^{\Idealsc})^*}
  \Hom_{\Idealsc(G)}(\Z[V_\reg,\Idealsc],Y) \\ \xrightarrow{\kappa^*}
  \Hom_{\Idealsc(G)}(\K_1(\Cst(G),\Idealsc),Y) \to 0.
\end{multline}
This works for \(n=0,1,2\), regardless whether or not
\(\K_1(\Cst(G),\Idealsc)\) is projective.

We now compare the resolution above to the constructions
in~\cite{Bentmann-Meyer:More_general}.  That article only considers
graph \Cstar{}algebras with finitely many ideals and, without
loss of generality, assumes all graphs to be regular.  While each
ideal in \(\Cst(G)\) is generated by a projection in that case, only
some ideals are used in~\cite{Bentmann-Meyer:More_general}.  Call an
ideal \(I\idealin B\) \emph{irreducible} if
\(I= \overline{\sum_{\alpha\in S} J_\alpha}\) for a family of
ideals~\(J_\alpha\) implies \(J_\alpha=I\) for some \(\alpha\in S\).
The irreducible ideals form a partially ordered set.  The invariant
that is used in~\cite{Bentmann-Meyer:More_general} is the diagram
consisting of the \(\Z/2\)\nb-graded Abelian groups \(\K_*(I)\) for
all \emph{irreducible} ideals \(I\idealin B\).  If~\(B\) has only
finitely many ideals, then for each primitive ideal
\(\mathfrak{p}\idealin B\) there is a minimal open subset of the
primitive ideal space that contains~\(\mathfrak{p}\).  The ideals
corresponding to these minimal open subsets are exactly the
irreducible ideals.

\begin{remark}
  The Cantor set is the primitive ideal space of a commutative
  AF-algebra and hence of a graph \Cstar{}algebra.  All open
  subsets of the Cantor set are reducible.  So restricting to
  irreducible ideals is not reasonable for \Cstar{}algebras with
  infinitely many ideals, not even if they are graph
  \Cstar{}algebras.
\end{remark}

For a graph \Cstar{}algebra with finitely many ideals, let
\(\Idealss(G)\subseteq \Idealsc(G) = \Ideals(G)\) be the subset
corresponding to the irreducible ideals in~\(\Cst(G)\).  We may
restrict the exact sequence of
diagrams~\eqref{eq:K_graph_exact_sequence} to the subset
\(\Idealss(G)\subseteq \Idealsc(G)\).  The restricted diagrams
\(\Z[V,\Idealss]\) and \(\K_1(\Cst(G),\Idealss)\) are shown
in~\cite{Bentmann-Meyer:More_general} to be projective objects in
the Abelian category of diagrams of Abelian groups
over~\(\Idealss(G)\).  Hence the groups
\(\Ext^n_{\Idealss(G)}\bigl(\K_0(\Cst(G),\Idealss),\psi^*\K_1(B,\Idealss)\bigr)\)
in this Abelian category of diagrams vanish for \(n\ge3\) and are
computed exactly as in~\eqref{eq:compute_Ext_Idealsc}
with~\(\Idealss\) instead of~\(\Idealsc\) if \(n=0,1,2\).

The invariant over~\(\Idealss\)
clearly contains less information than the invariant
over~\(\Idealsc\).
This does not mean, however, that lifting a map on the invariant
over~\(\Idealsc\)
is easier than lifting a map on the invariant over~\(\Idealss\): a
given extension to~\(\Idealsc\)
may put constraints on the desired \Cstar{}correspondence.

\begin{remark}
  \label{rem:irreducible_vs_singly_generated}
  Let~\(G\) be a regular graph with only finitely many hereditary
  saturated subsets.  Then any irreducible hereditary saturated
  subset is generated by a single vertex \(v\in V\).  The converse
  does not hold, however, because the union of two hereditary and
  saturated subsets of~\(V\) need not remain hereditary.  For
  instance, let~\(G\) be the graph
  \[
    \begin{tikzpicture}[node distance=1cm]
      \node (3) {3};
      \node (1) [left of=3] {1};
      \node (2) [right of=3] {2.};
      \path (1) edge [->,loop above] (2) edge[->] (3);
      \path (2) edge [->,loop above] (2) edge[->] (3);
    \end{tikzpicture}
  \]
  The hereditary and saturated subsets for~\(G\) are \(\emptyset\),
  \(\{1\}\), \(\{2\}\), and \(\{1,2,3\}\).  The last one is singly
  generated by~\(3\), but not irreducible.
\end{remark}

\begin{remark}
  \label{rem:invariant_infinite_ideal_lattice}
  The machinery of homological algebra in triangulated categories does
  not apply to the invariant \(\K_*(B,\Idealsc)\),
  not even if the ideal lattice of~\(B\)
  is finite.  This remark explains why.  First we make our invariant
  more precise.  Let~\(X\)
  be a locally quasi-compact topological space and let \((L,\le)\)
  be the partially ordered set of quasi-compact open subsets of~\(X\).
  Let~\(B\)
  be a \Cstar{}algebra
  over~\(X\).
  If \(\Prim(B) \cong X\)
  and~\(B\)
  has the ideal property, then \(L=\Idealsc(B)\).
  Diagrams of \(\Z/2\)\nb-graded
  countable Abelian groups over~\((L,\le)\)
  form a stable Abelian category, and mapping~\(B\)
  to the diagram \(\K_*(B,L) \defeq \bigl(\K_*(I)\bigr)_{I\in L}\)
  is a stable homological functor into this category.  This invariant
  is, however, not ``universal,'' so that the homological algebra
  machinery developed in \cites{Meyer-Nest:Homology_in_KK,
    Meyer:Homology_in_KK_II, Meyer-Nest:Bootstrap,
    Meyer-Nest:Filtrated_K} does not apply directly to it.  Let
  \(I_1,I_2\in L\)
  and assume \(I_1 \cap I_2 \in L\).
  We have \(I_1+I_2\in L\)
  anyway.  The boundary map~\(\partial\)
  in the Mayer--Vietoris sequence
  \[
    \dotsb \to
    \K_*(I_1 \cap I_2) \to 
    \K_*(I_1) \oplus \K_*(I_2) \to 
    \K_*(I_1+I_2) \xrightarrow{\partial}
    \K_{*-1}(I_1 \cap I_2) \to \dotsb
  \]
  is a natural transformation from \(\K_*(I_1+I_2)\)
  to \(\K_{*-1}(I_1\cap I_2)\),
  which is not yet encoded in our Abelian category of diagrams.  Hence
  our invariant cannot be universal.
\end{remark}

Despite Remark~\ref{rem:invariant_infinite_ideal_lattice}, we shall
work with the invariant \(\K_*(B,\Idealsc)\) and use homological
algebra in the category of diagrams over~\(\Idealsc(G)\).  This
works quite well to describe correspondences from graph
\Cstar{}algebras to~\(B\).  In fact, we shall use~\(\prop(B)\)
instead of \(\K_0^+(B,\Idealsc)\) most of the time.  These
invariants are equivalent by Theorem~\ref{the:prop_vs_K0plus}
if~\(B\) has stable weak cancellation.  We do not need to assume
stable weak cancellation when we work directly with~\(\prop(B)\).

\section{Adding \texorpdfstring{$\K_1$}{K₁}}
\label{sec:add_K1}

Theorem~\ref{the:lift_prop_CB} only uses the invariant~\(\prop(B)\),
which is not fine enough for classification.  Now we add information
about~\(\K_1\).
We develop a criterion when a pair of maps
\(\prop(\Cst(G))\to \prop(B)\)
and \(\K_1(\Cst(G),\Idealsc) \to \psi^*\K_1(B,\Idealsc)\)
is induced by a proper \(\Cst(G),B\)-correspondence.
It uses a variant of the obstruction class
in~\cite{Bentmann-Meyer:More_general}.

Let~\(\Hilm\)
be a proper \(\Cst(G),B\)-correspondence.
Describe~\(\Hilm\)
through proper Hilbert \(B\)\nb-modules~\(\Hilm_v\)
for \(v\in V\)
and unitaries
\(U_v\colon \bigoplus_{e\in E^v} \Hilm_{\s(e)} \congto \Hilm_v\)
for \(v\in V_\reg\)
as in Theorem~\ref{the:lift_prop_CB}.  We are going to
describe the homomorphisms
\[
\prop(\Cst(G))\to \prop(B),\quad
\Ideals(\Cst(G)) \to \Idealsc(B),\quad
\K_1(\Cst(G),\Idealsc) \to \psi^*\K_1(B,\Idealsc)
\]
induced by~\(\Hilm\).
By Theorem~\ref{the:prop_graph},
the homomorphism \(\Hilm_*\colon \prop(\Cst(G))\to \prop(B)\),
\([\Hilm[M]]\mapsto [\Hilm[M] \otimes_{\Cst(G)} \Hilm]\),
is determined by its values
\[
\Hilm_*[p_v]
= \Hilm_*[p_v\cdot \Cst(G)]
= [p_v\cdot \Hilm]
= [\Hilm_v].
\]
The homomorphism \(\Hilm_*\colon \prop(\Cst(G))\to \prop(B)\)
induces a map
\[
\psi\colon \Idealsc(G) \cong \Idealsc(\Cst(G)) \to\Idealsc(B)
\]
as in the proof of Theorem~\ref{the:prop_vs_K0plus}; this part of
the proof works without stable weak cancellation.  Namely, let
\(W\in\Idealsc(G)\)
be a finitely generated, hereditary, saturated subset of~\(V\).
Let \(F\subseteq W\)
be any finite generating set for~\(W\).
Then the projection \(\bigoplus_{v\in F} p_v\)
generates the ideal~\(\Cst(G)_W\) corresponding to~\(W\).  Hence
\[
\psi(W)\defeq \supp \varphi_0\Bigl(\sum_{v\in F} {}[p_v]\Bigr)
= \bigvee_{v\in F} {}\supp \varphi_0[p_v]
\]
This ideal is equal to \(\bigvee_{v\in W} {}\supp \varphi_0[p_v]\)
and does not depend on the choice of~\(F\).

By definition, \(\psi(W)\)
is the ideal in~\(B\) that is generated by the inner product on the
proper Hilbert \(B\)\nb-module
\(\bigoplus_{v\in F} p_v\cdot \Hilm = \bigoplus_{v\in F} \Hilm_v\).
Therefore, \(\Cst(G)_W\cdot \Hilm\)
is a full Hilbert \(\psi(W)\)\nb-module.
As such, it is nondegenerate as a right \(\psi(W)\)\nb-module, so that
\begin{equation}
  \label{eq:over_Idealsc}
  \Cst(G)_W \cdot \Hilm \subseteq \Hilm\cdot \psi(W).
\end{equation}
We briefly call a \(\Cst(G),B\)-correspondence~\(\Hilm\)
satisfying~\eqref{eq:over_Idealsc}
\emph{\(\Idealsc(G)\)\nb-covariant}.

Equation~\eqref{eq:over_Idealsc} implies that~\(\Hilm\)
restricts to a proper \(\Cst(G)_W,\psi(W)\)-correspondence
and thus induces a grading-preserving group homomorphism
\[
\Hilm_*|_W\colon\K_*(\Cst(G)_W) \to \K_*(\psi(W)).
\]
These homomorphisms for \(W\in \Idealsc(G)\) form a morphism of diagrams
\[
\Hilm_*\colon\K_*(\Cst(G),\Idealsc) \to \psi^*\K_*(B,\Idealsc).
\]
We shall not use this on~\(\K_0\)
in the following because it contains less information than the
homomorphism \(\Hilm_*\colon \prop(\Cst(G)) \to \prop(B)\)
(compare Theorem~\ref{the:prop_vs_K0plus}).  We now describe the map
\(\Hilm_*|_W\colon\K_1(\Cst(G),\Idealsc) \to \psi^*\K_1(B,\Idealsc)\).
Since the map~\(\kappa|_W\)
in~\eqref{eq:K_graph_exact_sequence_at_W} is injective, we may
identify \(\K_1(\Cst(G)_W) \cong \K_1(\Cst(G|_W))\)
with the kernel of the map \(\id-M_G^W\colon \Z[W_\reg] \to \Z[W]\).
Let \(x = \sum c_v \delta_v\in \Z[W_\reg]\)
belong to this kernel.  Write
\(x = \sum c_v^+\delta_v - \sum c_v^- \delta_v\)
with \(c_v^\pm\ge0\),
\(c_v = c_v^+ - c_v^-\)
and \(c_v^+=0\)
or \(c_v^-=0\)
for all \(v\in W_\reg\).
Set \(c_v=0\)
for \(v\in V_\reg\setminus W\).
The equation \((\id-M_G^W)(x)=0\) says that
\begin{equation}
  \label{eq:kernel_condition}
  \sum_{v\in V_\reg} c_v^+ \delta_v + \sum_{e\in E} c_{\rg(e)}^- \delta_{\s(e)}
  =
  \sum_{e\in E} c_{\rg(e)}^+ \delta_{\s(e)} + \sum_{v\in V_\reg} c_v^- \delta_v.
\end{equation}
Equivalently, the domains and codomains of the partial isometry
\[
u_x\defeq \bigoplus_{v\in W_\reg} u_v^{\oplus c_v^+}\oplus
\bigoplus_{v\in W_\reg} (u_v^*)^{\oplus c_v^-}
\]
are equal up to a permutation of the summands.  Let~\(u'_x\) be the
product of~\(u_x\) with the permutation matrix that makes it a
unitary element in the corner of~\(\Mat_\infty(\Cst(G)_W)\)
associated to the projection
\[
  p_x\defeq \bigoplus_{v\in V_\reg} p_v^{\oplus c_v^+}
  + \bigoplus_{e\in E} p_{\s(e)}^{\oplus c_{\rg(e)}^-}.
\]
Then~\(u'_x\) has a class
in~\(\K_1(\Cst(G)_W)\).  The exact
sequence~\eqref{eq:K_graph_exact_sequence} is proved by showing that
any class in~\(\K_1(\Cst(G)_W)\) is of this form for a unique
\(x\in \ker(\id - M_G^W)\).  Let \(P_v\in\Comp(\Hilm)\) be the
projection onto~\(\Hilm_v\) and let~\(U_v\) be as in
Theorem~\ref{the:lift_prop_CB}.  Define
\[
P_x\defeq \bigoplus_{v\in V_\reg} P_v^{\oplus c_v^+}
\oplus \bigoplus_{e\in E} P_{\s(e)}^{\oplus c_{\rg(e)}^-},
\]
and define a unitary~\(U'_x\)
in~\(P_x \Comp(\Hilm^\infty) P_x\)
in the same way as~\(u'_x\).
This is how \(u'_x \in p_x \Mat_\infty(\Cst(G)) p_x\)
acts on~\(P_x \Hilm^\infty\),
and the latter is a Hilbert \(\psi(W)\)-module.
The map \(\Hilm_*|_W\colon \K_1(\Cst(G)_W) \to \K_1(\psi(W))\)
maps~\(x\) to~\([U'_x]\).

Now let \(\varphi_0\colon \prop(\Cst(G))\to \prop(B)\)
be a monoid homomorphism.  Let
\[
\psi\colon \Idealsc(G) \cong \Idealsc(\Cst(G)) \to \Idealsc(B)
\]
be induced by~\(\varphi_0\).
And let
\(\varphi_1\colon \K_1(\Cst(G),\Idealsc) \to \psi^*\K_1(B,\Idealsc)\)
be a morphism of diagrams over~\(\Idealsc(G)\).
When is there a proper \(\Cst(G),B\)-\alb{}correspondence~\(\Hilm\)
that induces the maps \(\varphi_0\)
and~\(\varphi_1\)?
Our criterion needs a variant of the obstruction class
in~\cite{Bentmann-Meyer:More_general}.  Our variant lives in the
\(\Ext^2\)-group
\[
\Eext^2\bigl(G,\psi^*\K_1(B,\Idealsc)\bigr) \defeq
\Ext^2_{\Idealsc(G)}\bigl(\K_0(\Cst(G),\Idealsc),\psi^*\K_1(B,\Idealsc)\bigr).
\]
The extension group in question is described
in~\eqref{eq:compute_Ext_Idealsc} as the cokernel of the map
\[
\kappa^*\colon
\Hom_{\Idealsc(G)}\bigl(\Z[V_\reg,\Idealsc],\psi^*\K_1(B,\Idealsc)\bigr) \to
\Hom_{\Idealsc(G)}\bigl(\K_1(\Cst(G),\Idealsc),\psi^*\K_1(B,\Idealsc)\bigr)
\]
induced by the canonical injective diagram morphism
\(\kappa\colon \K_1(\Cst(G),\Idealsc) \to \Z[V_\reg,\Idealsc]\)
in~\eqref{eq:K_graph_exact_sequence}.  So elements in
\(\Eext^2\bigl(G,\psi^*\K_1(B,\Idealsc)\bigr)\)
are represented by diagram morphisms from \(\K_1(\Cst(G),\Idealsc)\)
to \(\psi^*\K_1(B,\Idealsc)\),
and such a diagram morphism represents~\(0\)
if and only if it factors through~\(\kappa\).

\begin{definition}
  \label{def:obstruction_class}
  Let \(\varphi_0\colon \prop(\Cst(G)) \to \prop(B)\) be a
  homomorphism.  It induces a map
  \(\psi\colon \Idealsc(G) \cong \Idealsc(\Cst(G)) \to
  \Idealsc(B)\).  Let
  \[
  \varphi_1\colon \K_1(\Cst(G),\Idealsc) \to \psi^*\K_1(B,\Idealsc)
  \]
  be a morphism of diagrams.  By Theorem~\ref{the:lift_prop_CB}, there
  is a correspondence~\(\Hilm\)
  that lifts~\(\varphi_0\).
  The \emph{obstruction class}
  \(\Delta(\varphi_0,\varphi_1) \in
  \Eext^2_{\Idealsc(G)}\bigl(G,\psi^*\K_1(B,\Idealsc)\bigr)\)
  of \((\varphi_0,\varphi_1)\)
  is the image of the diagram morphism
  \(\varphi_1 - \Hilm_* \colon \K_1(\Cst(G),\Idealsc) \to
  \psi^*\K_1(B,\Idealsc)\).
\end{definition}

As the name indicates, the obstruction class obstructs the existence
of a proper correspondence that lifts both maps \(\varphi_0\)
and~\(\varphi_1\):

\begin{proposition}
  \label{pro:obstruction_class_well-defined}
  The obstruction class does not depend on the choice of the proper
  correspondence~\(\Hilm\)
  lifting~\(\varphi_0\).
  It vanishes if there is a proper correspondence~\(\Hilm\)
  that induces the given maps \(\varphi_0\)
  and~\(\varphi_1\) on \(\prop\) and~\(\K_1\).
\end{proposition}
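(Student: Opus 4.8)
The plan is to analyze how the class of the diagram morphism $\varphi_1 - \Hilm_*$ in the cokernel of $\kappa^*$ changes when we replace the lifting correspondence $\Hilm$ by another one, $\Hilm'$. First I would invoke the structural results from Section~\ref{sec:corr_out_of_graph}: both $\Hilm$ and $\Hilm'$ induce the same map $\varphi_0$, hence by Theorem~\ref{the:lift_prop_CB} and the discussion following Remark~\ref{rem:replace_Hilbi_by_projections} they are given by the \emph{same} Hilbert modules $\Hilm_v$ (up to isomorphism, which we may absorb) and differ only in the choice of unitaries $U_v, U'_v \colon \bigoplus_{e \in E^v}\Hilm_{\s(e)} \congto \Hilm_v$ for $v \in V_\reg$. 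By Lemma~\ref{lem:difference_unitaries}, this difference is recorded by a family of unitaries $\Upsilon_v = U'_v U_v^* \in \U(\Hilm_v)$ for $v \in V_\reg$.

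The key computation is then to compare $\Hilm_*|_W$ and $\Hilm'_*|_W$ on $\K_1$. Using the explicit description given just before Definition~\ref{def:obstruction_class}: for $x = \sum c_v \delta_v \in \ker(\id - M_G^W) \subseteq \Z[W_\reg]$, the correspondence $\Hilm$ sends $x$ to $[U'_x] \in \K_1(\psi(W))$, where $U'_x$ is built out of the $U_v$ exactly as $u'_x$ is built out of the $u_v$; replacing $U_v$ by $\Upsilon_v U_v$ multiplies $U'_x$ by the corresponding product of the $\Upsilon_v$'s (and their adjoints, along the $c_v^-$ summands), so that $\Hilm'_*(x) - \Hilm_*(x) = [\widetilde{\Upsilon}_x]$ for a unitary $\widetilde{\Upsilon}_x$ depending only on the family $(\Upsilon_v)_{v \in V_\reg}$ and on the coefficients $c_v$. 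The crucial point is that the assignment $x \mapsto [\widetilde{\Upsilon}_x]$ factors through $\kappa$: indeed each $\Upsilon_v \in \U(\Hilm_v)$ defines a class $[\Upsilon_v] \in \K_1(\psi(\langle v\rangle))$, these assemble into a diagram morphism $\Upsilon \colon \Z[V_\reg, \Idealsc] \to \psi^*\K_1(B,\Idealsc)$ (one checks naturality over $\Idealsc(G)$ using that $\psi$ is monotone), and by construction $\widetilde{\Upsilon}_x$ is precisely $(\kappa^*\Upsilon)(x)$ up to signs absorbed into the coefficients. Hence $\Hilm'_* - \Hilm_* = \kappa^*(\Upsilon)$ lies in the image of $\kappa^*$, so $\varphi_1 - \Hilm'_*$ and $\varphi_1 - \Hilm_*$ have the same image in $\Eext^2(G,\psi^*\K_1(B,\Idealsc))$. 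This proves well-definedness. The second assertion is then immediate: if some proper correspondence $\Hilm$ induces \emph{both} $\varphi_0$ and $\varphi_1$, then $\Hilm_* = \varphi_1$ on $\K_1$, so $\varphi_1 - \Hilm_* = 0$ as a diagram morphism, and its image in the $\Ext^2$-group is $0$.

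The main obstacle I anticipate is the bookkeeping in the $\K_1$-comparison step — correctly tracking how the permutation matrices that turn $u_x$ into $u'_x$ interact with the block structure of $\Upsilon_x$, and verifying that the resulting class in $\K_1(\psi(W))$ depends only on the $[\Upsilon_v]$ and is additive in $x$. This is where one must be careful that the $c_v^\pm$ decomposition is respected and that conjugating by permutations does not change the $\K_1$-class. Once it is established that $x \mapsto [\widetilde{\Upsilon}_x]$ is the image under $\kappa^*$ of the diagram morphism determined by $([\Upsilon_v])_{v \in V_\reg}$, everything else is formal; checking that this latter assignment is genuinely a morphism of diagrams over $\Idealsc(G)$ (i.e.\ compatible with the inclusion-induced maps) is routine given that $\Hilm$ is $\Idealsc(G)$-covariant.
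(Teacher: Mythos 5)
Your proposal is correct and follows essentially the same route as the paper's proof: reduce to two families $((\Hilm_v),(U_v))$ and $((\Hilm_v),(\Upsilon_v U_v))$ via Lemma~\ref{lem:difference_unitaries}, assemble the classes $[\Upsilon_v]\in\K_1(\psi(\langle v\rangle))$ into a diagram morphism $\Z[V_\reg,\Idealsc]\to\psi^*\K_1(B,\Idealsc)$, and verify by the explicit $\K_1$-computation that the difference $\Hilm'_*-\Hilm_*$ equals this morphism composed with $\kappa$, hence dies in the cokernel defining $\Eext^2$. The bookkeeping point you flag (permutations and the $c_v^{\pm}$ decomposition) is exactly what the paper handles with the permutation $\Sigma$ and the relation $[\Upsilon_v^*]=-[\Upsilon_v]$, so there is no gap.
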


\begin{proof}
  By definition, \(\Delta(\varphi_0,\varphi_1)=0\)
  if \(\Hilm\)
  induces \(\varphi_0\)
  and~\(\varphi_1\)
  on \(\prop\)
  and~\(\K_1\).
  So the second statement follows if the obstruction class does not
  depend on the choice of~\(\Hilm\).
  This is what we are going to prove.

  Let~\(\Hilm'\)
  be another proper correspondence lifting~\(\varphi_0\).
  This involves isomorphic Hilbert \(B\)\nb-modules~\(\Hilm_v\)
  because \([\Hilm_v] = \varphi_0[p_v]\).
  So~\(\Hilm'\)
  is isomorphic to the proper correspondence for a family
  \(((\Hilm_v),(U_v'))\)
  with the same proper Hilbert \(B\)\nb-modules~\(\Hilm_v\).
  By Lemma~\ref{lem:difference_unitaries}, the unitaries~\(U_v'\)
  are of the form \(U'_v = \Upsilon_v U_v\)
  with \(\Upsilon_v \in \U(\Comp(\Hilm_v))\)
  for \(v\in V_\reg\).
  Each~\(\Upsilon_v\)
  has a class in \(\K_1(\Comp(\Hilm_v))\).
  The \Cstar{}algebra~\(\Comp(\Hilm_v)\)
  is Morita--Rieffel equivalent through~\(\Hilm_v\)
  to the ideal in~\(B\)
  generated by the inner products of elements in~\(\Hilm_v\).
  This ideal is equal to \(\psi(\langle v\rangle)\idealin B\),
  where \(\langle v\rangle\subseteq V\)
  is the hereditary and saturated subset generated by~\(v\).
  Thus~\(\Hilm'\) yields a diagram morphism
  \begin{equation}
    \label{eq:difference_class_as_diagram_morphism}
    [\Upsilon]\colon \Z[V_\reg,\Idealsc] \to \psi^*(\K_1(B,\Idealsc)),\qquad
    \delta_v\mapsto [\Upsilon_v],
  \end{equation}
  compare the proof of Proposition~\ref{pro:ZV_projective}.  We claim
  that
  \begin{equation}
    \label{eq:difference_class_boundary}
    \Hilm_* - \Hilm'_*  = [\Upsilon]\circ \kappa\qquad
    \text{as maps }\K_1(\Cst(G),\Idealsc) \to \psi^*\K_1(B,\Idealsc)
  \end{equation}
  with the map~\(\kappa\) in~\eqref{eq:K_graph_exact_sequence}.  So
  \(\Hilm\) and \(\Hilm'\) have the same image in
  \(\Eext^2\bigl(G,\psi^*\K_1(B,\Idealsc)\bigr)\), as desired.  To
  prove~\eqref{eq:difference_class_boundary}, choose a hereditary
  and saturated subset \(W\subseteq V\) and
  \(x\in \K_1(\Cst(G)_W)\).  Write
  \(\kappa(x) = \sum_{v\in W_\reg} {}(c_v^+-c_v^-) \delta_v\) with
  \(c_v^\pm\ge0\).  Let \(v\in W_\reg\).  The Hilbert
  \(B\)\nb-modules
  \(\Hilm[F]_v \defeq \bigoplus_{v\in V} \Hilm_v^{\oplus
    c_v^+}\oplus \bigoplus_{e\in E^v} \Hilm_{\s(e)}^{\oplus c_v^-}\)
  and
  \(\bigoplus_{v\in V} \Hilm_v^{\oplus c_v^-}\oplus \bigoplus_{e\in
    E^v} \Hilm_{\s(e)}^{\oplus c_v^+}\) are isomorphic by a
  permutation~\(\Sigma\) of the summands because
  \((\id- M_G^W)\kappa(x)=0\).  Since their inner products take
  values in \(\psi(W)\idealin B\), they are Hilbert
  \(\psi(W)\)\nb-modules.  The unitaries
  \begin{align*}
    U_x&\defeq \bigoplus_{v\in V_\reg} U_v^{\oplus c_v^+}
         \oplus \bigoplus_{v\in V_\reg} (U_v^*)^{\oplus c_v^-},\\
    U'_x&\defeq \bigoplus_{v\in V_\reg} (\Upsilon_v U_v)^{\oplus c_v^+}
          \oplus \bigoplus_{v\in V_\reg} (U_v^* \Upsilon_v^*)^{\oplus c_v^-}
  \end{align*}  
  are isomorphisms between these Hilbert \(\psi(W)\)\nb-modules as
  well.  So \(\Sigma U_x,\Sigma U_x' \in \U(\Hilm[F]_v)\).  By
  definition, \(\Hilm_*(x)\) and \(\Hilm'_*(x)\) are the classes in
  \(\K_1(\psi(W))\) defined by the unitaries \(\Sigma U_x\)
  and~\(\Sigma U_x'\).  They differ by the class of
  \(\bigoplus_{v\in V_\reg} \Upsilon_v^{\oplus c_v^+}\oplus
  \bigoplus_{v\in V_\reg} (\Upsilon_v^*)^{\oplus c_v^-}\).  This is
  \([\Upsilon](\kappa(x))\) because
  \([\Upsilon_v^*] = - [\Upsilon_v]\) in \(\K_1(\psi(W))\).
\end{proof}

The vanishing of the obstruction class is necessary for the existence
of a proper correspondence~\(\Hilm\)
that lifts \((\varphi_0,\varphi_1)\).
We are going to show that it is also sufficient under an extra
assumption on~\(B\).

\begin{definition}
  \label{def:K1_surjective_injective}
  A unital \Cstar{}algebra~\(B\)
  is \emph{\(\K_1\)\nb-surjective}
  or \emph{\(\K_1\)\nb-injective},
  respectively, if the map \(\pi_0(\U(B)) \to\K_1(B)\)
  is surjective or injective, respectively.
\end{definition}

\begin{proposition}
  \label{pro:lift_to_K0_K1}
  Let~\(G\)
  be a graph, \(B\)
  a \Cstar{}algebra,
  and let \(\varphi_0\colon \prop(\Cst(G)) \to \prop(B)\)
  and
  \(\varphi_1\colon \K_1(\Cst(G),\Idealsc) \to
  \psi^*\K_1(B,\Idealsc)\)
  be given, where \(\psi\colon \Idealsc(G) \to \Idealsc(B)\)
  is induced by~\(\varphi_0\).
  For \(v\in V\),
  let \(\Hilm_v\)
  be proper Hilbert \(B\)\nb-modules
  with \([\Hilm_v] = \varphi_0[p_v]\).
  Assume that the unital \Cstar{}algebras~\(\Comp(\Hilm_v)\)
  for \(v\in V\)
  are \(\K_1\)\nb-surjective.
  There is a proper \(\Cst(G),B\)-correspondence~\(\Hilm\)
  that induces the given maps \(\varphi_0\)
  and~\(\varphi_1\) if and only if \(\Delta(\varphi_0,\varphi_1)=0\).
\end{proposition}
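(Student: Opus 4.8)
The necessity of $\Delta(\varphi_0,\varphi_1)=0$ is precisely the second assertion of Proposition~\ref{pro:obstruction_class_well-defined}, so the plan is to prove sufficiency. Assuming $\Delta(\varphi_0,\varphi_1)=0$, I would first produce a base correspondence lifting $\varphi_0$ built from the given modules $\Hilm_v$. For $v\in V_\reg$ the partial isometry $u_v$ gives $\sum_{e\in E^v}[p_{\s(e)}]=[p_v]$ in $\prop(\Cst(G))$, hence $\sum_{e\in E^v}[\Hilm_{\s(e)}]=\varphi_0[p_v]=[\Hilm_v]$ in $\prop(B)$, so there are unitaries $U_v\colon\bigoplus_{e\in E^v}\Hilm_{\s(e)}\congto\Hilm_v$. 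By Theorem~\ref{the:lift_prop_CB} the family $((\Hilm_v),(U_v))$ describes a proper $\Cst(G),B$-correspondence $\Hilm^{(0)}$, and since $\Hilm^{(0)}_*[p_v]=[\Hilm_v]=\varphi_0[p_v]$ with both sides monoid homomorphisms, Theorem~\ref{the:prop_graph} forces $\Hilm^{(0)}_*=\varphi_0$ on $\prop(\Cst(G))$; in particular $\Hilm^{(0)}$ induces the given map $\psi$ on ideals. Thus $\Hilm^{(0)}$ is a lift of $\varphi_0$ in the sense of Definition~\ref{def:obstruction_class}, and $\Delta(\varphi_0,\varphi_1)$ is the image in $\Eext^2(G,\psi^*\K_1(B,\Idealsc))$ of the diagram morphism $\varphi_1-\Hilm^{(0)}_*$.

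Next I would unwind the vanishing hypothesis. Since $\Eext^2(G,\psi^*\K_1(B,\Idealsc))$ is the cokernel of the map $\kappa^*$ induced by $\kappa\colon\K_1(\Cst(G),\Idealsc)\to\Z[V_\reg,\Idealsc]$ from~\eqref{eq:K_graph_exact_sequence}, $\Delta(\varphi_0,\varphi_1)=0$ means there is a morphism of diagrams $\beta\colon\Z[V_\reg,\Idealsc]\to\psi^*\K_1(B,\Idealsc)$ with $\varphi_1-\Hilm^{(0)}_*=\beta\circ\kappa$. By the proof of Proposition~\ref{pro:ZV_projective}, such a $\beta$ is uniquely determined by the elements $\beta(\delta_v)\in\K_1(\psi(\langle v\rangle))$ for $v\in V_\reg$, and conversely any family of such elements defines a morphism of diagrams.

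Then I would realise $\beta$ through unitaries on the $\Hilm_v$. For $v\in V_\reg$, the proper Hilbert $B$-module $\Hilm_v$ implements a Morita--Rieffel equivalence between the unital \Cstar{}algebra $\Comp(\Hilm_v)$ and the ideal $\psi(\langle v\rangle)\idealin B$, hence an isomorphism $\K_1(\Comp(\Hilm_v))\cong\K_1(\psi(\langle v\rangle))$. Since $\Comp(\Hilm_v)$ is $\K_1$-surjective, I can choose a unitary $\Upsilon_v\in\U(\Comp(\Hilm_v))$ whose class corresponds to $-\beta(\delta_v)$. Setting $U'_v\defeq\Upsilon_v U_v$, which is again a unitary $\bigoplus_{e\in E^v}\Hilm_{\s(e)}\congto\Hilm_v$ by Lemma~\ref{lem:difference_unitaries}, the family $((\Hilm_v),(U'_v))$ describes a proper $\Cst(G),B$-correspondence $\Hilm'$. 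Because the Hilbert modules $\Hilm_v$ are unchanged, $\Hilm'$ still induces $\varphi_0$ on $\prop(\Cst(G))$ and the same $\psi$ on ideals; and the computation in the proof of Proposition~\ref{pro:obstruction_class_well-defined} gives $\Hilm^{(0)}_*-\Hilm'_*=[\Upsilon]\circ\kappa$ as maps $\K_1(\Cst(G),\Idealsc)\to\psi^*\K_1(B,\Idealsc)$, where $[\Upsilon]$ is the diagram morphism $\delta_v\mapsto[\Upsilon_v]$, which equals $-\beta$ by the uniqueness in the previous paragraph. Hence $\Hilm'_*=\Hilm^{(0)}_*+\beta\circ\kappa=\varphi_1$, so $\Hilm'$ induces both $\varphi_0$ and $\varphi_1$ and is the desired correspondence.

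The only part that is not formal bookkeeping with the resolution~\eqref{eq:K_graph_exact_sequence} and the identifications already set up is the last one: one must promote the abstract classes $-\beta(\delta_v)\in\K_1(\Comp(\Hilm_v))$ to genuine unitaries $\Upsilon_v$ in $\Comp(\Hilm_v)$ itself, since only such a unitary modifies the correspondence via Lemma~\ref{lem:difference_unitaries}, whereas a general $\K_1$-class is a priori only represented by a unitary in a matrix amplification of $\Comp(\Hilm_v)$. This is exactly where --- and the only place where --- the $\K_1$-surjectivity hypothesis enters.
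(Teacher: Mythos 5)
Your proof is correct and follows essentially the same route as the paper: take a lift of $\varphi_0$ built from the given modules $\Hilm_v$, express the vanishing of the obstruction class as $\varphi_1-\Hilm^{(0)}_*=\beta\circ\kappa$, realise the classes $\beta(\delta_v)$ by unitaries $\Upsilon_v\in\U(\Comp(\Hilm_v))$ via $\K_1$\nb-surjectivity and the Morita--Rieffel equivalence, and twist the $U_v$ accordingly, concluding with the computation from Proposition~\ref{pro:obstruction_class_well-defined}. Your choice of $\Upsilon_v$ representing $-\beta(\delta_v)$ is in fact the sign consistent with~\eqref{eq:difference_class_boundary} as stated, so nothing is missing.
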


\begin{proof}
  One direction is proven already in
  Proposition~\ref{pro:obstruction_class_well-defined}, without
  assumption on~\(B\).  Here we prove the converse.  So assume that
  the obstruction class vanishes.  We shall modify a proper
  \(\Cst(G),B\)-\alb{}correspondence~\(\Hilm\) that
  lifts~\(\varphi_0\) so that it also lifts~\(\varphi_1\).  The
  vanishing of the obstruction class means that
  \[
  \varphi_1 - \Hilm_* = \alpha\circ\kappa\colon
  \K_1(\Cst(G),\Idealsc) \to \psi^*\K_1(B,\Idealsc)
  \]
  for a diagram morphism
  \(\alpha\colon\Z[V_\reg,\Idealsc] \to \psi^*\K_1(B,\Idealsc)\).
  Describe~\(\Hilm\)
  by a family \(((\Hilm_v),(U_v))\)
  as above.  Let \(v\in V_\reg\)
  and let \(\langle v\rangle\subseteq V\)
  be the hereditary and saturated subset generated by~\(v\).
  Since \(\psi(\langle v\rangle)\idealin B\)
  is Morita--Rieffel equivalent to~\(\Comp(\Hilm_v)\)
  and~\(\Comp(\Hilm_v)\)
  is \(\K_1\)\nb-surjective,
  \(\alpha(\delta_v) \in \K_1(\psi(\langle v\rangle)) \cong \K_1(\Comp(\Hilm_v))\)
  is represented by some \(\Upsilon_v\in \U(\Comp(\Hilm_v))\).

  Define \(U_v'\defeq \Upsilon_v\circ U_v\).  The family
  \(((\Hilm_v),(U_v'))\) gives another proper
  \(\Cst(G),B\)-\alb{}correspondence~\(\Hilm'\) lifting the same
  map~\(\varphi_0\).  Let~\([\Upsilon_v]\) denote the class
  of~\(\Upsilon_v\) in
  \(\K_1(\psi(\langle v\rangle)) \cong \K_1(\Comp(\Hilm_v))\).  As
  above, this defines a morphism of diagrams
  \(\Z[V_\reg,\Idealsc] \to \psi^* \K_1(B,\Idealsc)\).  The proof of
  Proposition~\ref{pro:obstruction_class_well-defined} shows that
  \(\Hilm'_* - \Hilm_*\) is equal to
  \([\Upsilon_v]\circ\kappa = \alpha\circ\kappa = \varphi_1 -
  \Hilm_*\).  As a result, \(\Hilm'_* = \varphi_1\).
\end{proof}

Now assume that the target~\(B\) is a graph \Cstar{}algebra as
well.  The following theorem allows us to apply all of the previous
results to study \(A,B\)-correspondences:

\begin{theorem}
  \label{the:graph_K1-bijective}
  Let \(G_2 = (r,s\colon E_2 \rightrightarrows V_2)\) be a
  regular directed graph and let~\(B\) be its graph
  \Cstar{}algebra.  Let \(p\in\prop(B)\).  Then~\(B\) has stable
  weak cancellation and the unital \Cstar{}algebra
  \(p (B\otimes\Comp) p\) is \(\K_1\)\nb-bijective.
\end{theorem}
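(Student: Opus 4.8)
Stable weak cancellation of any graph \Cstar{}algebra is \cite{Ara-Moreno-Pardo:Nonstable_K_for_Graph_Algs}*{Corollary~7.2}, already recorded before Theorem~\ref{the:prop_for_pi}, so the real point is the \(\K_1\)\nb-bijectivity of \(D \defeq p(B\otimes\Comp)p\). I would first reduce to the case where \(p\) is full. Writing \(p\sim\sum_{v\in F} c_v p_v\) with \(F\subseteq V_2\) finite (Theorem~\ref{the:prop_graph}), the ideal \(\supp p\) corresponds to the hereditary saturated set \(\langle F\rangle\subseteq V_2\) generated by~\(F\), which is countable since~\(G_2\) is row-finite; thus \(\supp p = \Cst(G_2)_{\langle F\rangle}\) is Morita--Rieffel equivalent to the graph \Cstar{}algebra of the countable regular graph \(G_2|_{\langle F\rangle}\). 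As \(p\) is full in \(\supp p\) and \(D = p(\supp p\otimes\Comp)p\) is the corresponding full corner of \(\supp p\otimes\Comp\), Brown's stabilisation theorem shows \(D\) is stably isomorphic to \(\Cst(G_2|_{\langle F\rangle})\). So we may assume that \(p\) is full in \(B\), so that \(D\) is a full corner of \(B\otimes\Comp\).

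Next I would invoke the structure theory of graph \Cstar{}algebras. There is a gauge-invariant ideal \(I\idealin B\) of stable rank one — built, up to Morita--Rieffel equivalence, from AF blocks and blocks \(\Mat_k(\Cont(\T))\) arising from cycles without exit — such that the quotient \(B/I\) is a purely infinite (separable, nuclear) graph \Cstar{}algebra; this is part of the well-understood ideal structure of graph \Cstar{}algebras. Cutting this extension down by~\(p\) gives a short exact sequence \(0 \to J \to D \to D/J \to 0\), where \(J\defeq p(I\otimes\Comp)p\) is a full corner of an ideal of \(I\otimes\Comp\) and is again of stable rank one (again built from AF and \(\Mat_k(\Cont(\T))\) blocks up to Morita--Rieffel equivalence), and \(D/J\) is a full corner of the purely infinite \Cstar{}algebra \((B/I)\otimes\Comp\), hence purely infinite. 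Now \(J\) is \(\K_1\)\nb-bijective because \Cstar{}algebras of stable rank one are (Rieffel), and \(D/J\) is \(\K_1\)\nb-bijective because separable nuclear purely infinite \Cstar{}algebras are (see \cite{Rordam:Classification_survey}).

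Finally I would glue these along the extension. The exponential boundary map \(\K_1(D/J) \to \K_0(J)\) of the gauge-invariant ideal~\(J\) vanishes — a special feature of graph \Cstar{}algebras recalled in the introduction — and \(\K_1(B)\) is free, so the six-term sequence degenerates to a split short exact sequence \(0 \to \K_1(J) \to \K_1(D) \to \K_1(D/J) \to 0\). Vanishing of the exponential map makes \(\pi_0(\U(\widetilde D)) \to \pi_0(\U(\widetilde{D/J}))\) surjective with kernel the image of \(\pi_0(\U(\widetilde J))\); feeding in the \(\K_1\)\nb-surjectivity of \(D/J\) and the \(\K_1\)\nb-bijectivity of~\(J\), a diagram chase in the commuting ladder formed by these \(\pi_0(\U)\)-sequences over the relative \(\K_1\)\nb-sequence of the pair \((D,J)\) yields both the \(\K_1\)\nb-surjectivity and the \(\K_1\)\nb-injectivity of~\(D\).

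The main obstacle is this gluing: \(\K_1\)\nb-bijectivity does not survive arbitrary extensions, and it is the vanishing of the exponential map of the gauge-invariant ideal, together with freeness of \(\K_1\), that forces it through here. A variant avoiding the structure theory would represent a given class in \(\K_1(B)=\K_1(D)\) by one of the unitaries \(u'_x\) in a finite corner \(p_x\Mat_\infty(B)p_x\) (as in the analysis preceding Definition~\ref{def:obstruction_class}) and then compress it into the corner cut out by~\(p\), using that \(p\) stably dominates~\(p_x\) (Lemma~\ref{lem:supp_relation}); the compression step is again exactly where proper infiniteness versus stable rank one must be handled separately, so it is essentially the same difficulty.
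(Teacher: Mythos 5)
There is a genuine gap at the heart of your argument: the structural claim that every (row-finite) graph \Cstar{}algebra contains a gauge-invariant ideal~\(I\) of stable rank one with \(B/I\) purely infinite is false. Take the regular graph with two vertices \(v,w\), two loops at~\(v\), one loop at~\(w\), and one edge~\(e_0\) with \(s(e_0)=v\), \(r(e_0)=w\). Then \(\{v\}\) is hereditary and saturated, the corresponding ideal~\(I_0\) is Morita--Rieffel equivalent to~\(\mathcal{O}_2\), the quotient \(B/I_0\) is \(\Cont(\T)\), and the gauge-invariant ideals are exactly \(0\subset I_0\subset B\). None of them works: \(B/0=B\) has the commutative quotient \(\Cont(\T)\) and hence is not purely infinite (quotients of purely infinite \Cstar{}algebras are purely infinite), while \(I_0\) and~\(B\) contain the infinite projection~\(p_v\) and therefore do not have stable rank one. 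The AF/\(\Cont(\T)\)-type layers and the purely infinite layers of a graph \Cstar{}algebra can be interleaved in either order, so no global two-layer decomposition of the kind you posit exists. A second error occurs in the gluing step: the map \(\K_1(D/J)\to\K_0(J)\) is the \emph{index} map, not the exponential map. The special feature of graph \Cstar{}algebras recalled in the introduction is the vanishing of the exponential map \(\K_0(B/I)\to\K_1(I)\); the index map \(\K_1(B/I)\to\K_0(I)\) does \emph{not} vanish in general (for the Toeplitz extension \(0\to\Comp\to\mathcal{T}\to\Cont(\T)\to0\), which is of graph-algebra type, it is an isomorphism \(\Z\to\Z\)), so your six-term sequence does not degenerate as claimed. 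Finally, even where the \(\K\)\nb-theoretic boundary maps do vanish, that is not enough to push \(\K_1\)\nb-surjectivity through an extension: one must lift a unitary of \(D/J\) to a unitary of~\(D\) itself, not merely to an element of a matrix algebra over~\(D\), and this requires good index theory in the sense of Brown--Pedersen --- precisely the ingredient your diagram chase assumes without proof.

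The paper's proof is organised to avoid both problems. It first reduces to finite graphs by writing \(p(B\otimes\Comp)p\) as an inductive limit of corners \(p(B'\otimes\Comp)p\) over finite subgraphs, using that \(\K_1\) and \(\pi_0\) of the unitary group are continuous under inductive limits. For a finite graph it chooses a full composition series of gauge-invariant ideals with gauge-simple subquotients, each of which is AF, of the form \(\Cont(\T,\Mat_n)\), or purely infinite simple; the corresponding corners are extremally rich with weak cancellation, and Brown--Pedersen prove that the package of four properties --- \(\K_1\)\nb-surjectivity, \(\K_1\)\nb-injectivity, weak \(\K_0\)\nb-surjectivity, and good index theory --- holds for such algebras and, crucially, is closed under extensions. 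That closure result is the correct substitute for your gluing argument, and the many-layer composition series together with the reduction to finite graphs is the correct substitute for your nonexistent two-layer decomposition. To repair your proof along your own lines you would have to induct over a composition series of arbitrary finite length and carry the entire Brown--Pedersen package, not just \(\K_1\)\nb-bijectivity, through each extension.
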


\begin{proof}
  Graph \Cstar{}algebras have stable weak cancellation by
  \cite{Ara-Moreno-Pardo:Nonstable_K_for_Graph_Algs}*{Corollary~7.2}.
  By Theorem~\ref{the:prop_graph}, any \(p\in\prop(B)\) is
  equivalent to \(\sum_{v\in V_2} c_v [p_v]\) for a finitely
  supported function \((c_v)_{v\in V_2} \in \N[V]\).

  For a finite subset \(V'_2\subseteq V_2\), let~\(B'\) be the
  \Cstar{}subalgebra of~\(B\) generated by
  \(\setgiven{t_e}{r(e)\in V'_2} \cup \setgiven{p_v}{v\in V'_2}\).
  These form a directed set of \Cstar{}subalgebras of~\(B\), and
  their union is dense in~\(B\).  Therefore, \(p (B\otimes\Comp) p\)
  is the inductive limit of the \Cstar{}algebras
  \(p (B'\otimes\Comp) p\).  Both~\(\K_1\) and the set of connected
  components of the unitary group are continuous for inductive
  limits.  Therefore, it is enough to prove that
  \(p (B'\otimes\Comp) p\) is \(\K_1\)\nb-bijective for all finite
  subsets \(V'_2\subseteq V_2\).  We may assume without loss of
  generality that~\(V'_2\) contains \(\supp (c_v)\), so that
  \(p\in B'\otimes \Comp \subseteq B\otimes \Comp\).  The
  \Cstar{}algebra~\(B'\) may be identified with the
  \Cstar{}algebra of a finite graph (compare
  \cite{Raeburn-Szymanski:CK_algs_of_inf_graphs_and_matrices}*{Lemma~1.2}).
  Here we may take the graph with vertex set
  \(V'_2 \cup s(r^{-1}(V'_2))\) and edge set~\(r^{-1}(V'_2)\).
  Therefore, it suffices to prove that \(p (B\otimes\Comp) p\) is
  \(\K_1\)\nb-bijective when~\(B\) is the graph \Cstar{}algebra
  of a finite graph.  We assume this from now on.

  The advantage in the finite case is that there are only finitely
  many gauge-invariant ideals.  Therefore, there is an increasing
  chain of gauge-invariant ideals
  \(0 = I_0 \subset I_1 \subset I_2 \subset \dotsb \subset I_\ell =
  B\) so that each subquotient \(I_{k+1}/I_k\) for
  \(k=0,\dotsc,\ell-1\) has no gauge-invariant ideals.  It is known
  that these subquotients \(I_{k+1}/I_k\) are again Morita
  equivalent to graph \Cstar{}algebras, so that they have stable
  weak cancellation.  In addition, they are either AF or of the form
  \(\Cont(\T,\Mat_n)\) for some \(n\in\N_{\ge1}\) or purely infinite
  simple.  In the first two cases,
  \(p\cdot (I_{k+1}/I_k \otimes \Comp) p\) has stable rank~\(1\)
  (see
  \cite{Raeburn-Szymanski:CK_algs_of_inf_graphs_and_matrices}*{Proposition~5.4}).
  In all three cases, \(p\cdot (I_{k+1}/I_k \otimes \Comp)\cdot p\)
  is extremally rich and has weak cancellation (see
  \cite{Brown-Pedersen:Non-stable}).  Brown and Pedersen show
  in~\cite{Brown-Pedersen:Non-stable} that extremally rich
  \Cstar{}algebras with weak cancellation have the following
  properties:
  \begin{itemize}
  \item \(\K_1\)\nb-surjectivity by
    \cite{Brown-Pedersen:Non-stable}*{Theorem 4.4};
  \item good index theory by
    \cite{Brown-Pedersen:Non-stable}*{Theorem~5.1};
  \item \(\K_1\)\nb-injectivity and weak \(\K_0\)\nb-surjectivity by
    \cite{Brown-Pedersen:Non-stable}*{Theorem 6.7}.
  \end{itemize}
  It is shown in~\cite{Brown-Pedersen:Non-stable} that the class of
  \Cstar{}algebras with these four properties --~weak
  \(\K_0\)\nb-surjectivity, \(\K_1\)\nb-surjectivity,
  \(\K_1\)\nb-injectivity, and good index theory~-- is closed under
  extensions (see \cite{Brown-Pedersen:Non-stable}*{Propositions 6.6
    and~7.2.4}).  Therefore, \(p (B\otimes\Comp) p\) has these four
  properties.  In particular, it is \(\K_1\)\nb-bijective.
\end{proof}


\section{The purely infinite case}
\label{sec:purely_inf}

At this point, our results combined with Kirchberg's Classification
Theorem imply a criterion when there is a \Cstar{}algebra
isomorphism \(\Cst(G) \cong B\), even without assuming~\(B\) to be a
graph \Cstar{}algebra:

\begin{theorem}
  \label{the:pi_graph_classify}
  Let~\(G\)
  be a countable graph for which~\(\Cst(G)\)
  is purely infinite.  Let~\(B\)
  be a separable, nuclear, strongly purely infinite \Cstar{}algebra
  such that all ideals of~\(B\)
  are in the bootstrap class and such that \(\Prim(B)\)
  has a basis consisting of quasi-compact open subsets.  Assume that
  there are an isomorphism \(\psi\colon\Idealsc(G)\to\Idealsc(B)\)
  of partially ordered sets and isomorphisms of diagrams
  \(\varphi_j\colon\K_j(\Cst(G),\Idealsc) \congto
  \psi^*\K_j(B,\Idealsc)\)
  for \(j=0,1\)
  for which the obstruction class vanishes.  Then \(\Cst(G)\)
  and~\(B\) are stably isomorphic, and vice versa.
\end{theorem}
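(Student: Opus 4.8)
The plan is to realise the prescribed data by a full, proper \(\Cst(G),B\)-correspondence, convert it into a \Star{}homomorphism into \(B\otimes\Comp\), recognise this homomorphism as a \(\KK(X)\)\nb-equivalence over the common primitive ideal space~\(X\), and conclude with Kirchberg's Classification Theorem.  The ``vice versa'' is formal: a stable isomorphism \(\Cst(G)\otimes\Comp\cong B\otimes\Comp\) restricts to a full, proper \(\Cst(G),B\)-correspondence which induces isomorphisms \(\psi\), \(\varphi_0\), \(\varphi_1\) on \(\Idealsc\), on \(\K_0(\Cst(G),\Idealsc)\) and on \(\K_1(\Cst(G),\Idealsc)\), and then \(\Delta(\varphi_0,\varphi_1)=0\) holds by Proposition~\ref{pro:obstruction_class_well-defined}.

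First I would do the book-keeping.  Since~\(B\) is purely infinite, Theorem~\ref{the:prop_for_pi} and Corollary~\ref{cor:Hom_prop_purely_infinite} turn the pair \((\psi,\varphi_0)\) into a monoid homomorphism \(f\colon\prop(\Cst(G))\to\prop(B)\); by naturality, \((\psi^{-1},\varphi_0^{-1})\) produces an inverse, so~\(f\) is an isomorphism, and the map on \(\Idealsc\) induced by~\(f\) is~\(\psi\).  Using that all ideals of a purely infinite graph \Cstar{}algebra are gauge-invariant (so \(\Idealsc^\infty(\Cst(G))=\Ideals(\Cst(G))\) by Theorem~\ref{the:graph_Idealsc}), that~\(B\) has the ideal property because \(\Prim(B)\) has a basis of quasi-compact open subsets, and Remark~\ref{rem:Idealsc_infty}, the additive isomorphism~\(\psi\) extends uniquely to an isomorphism \(\psi^\infty\colon\Ideals(\Cst(G))\congto\Ideals(B)\) of the full ideal lattices, commuting with arbitrary suprema; write~\(X\) for the resulting common primitive ideal space.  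By hypothesis \(\Delta(f,\varphi_1)=0\).  Every corner \(p(B\otimes\Comp)p\) with \(p\in\Mat_\infty(B)\) a projection is again purely infinite, hence \(\K_1\)\nb-surjective, so Hilbert \(B\)\nb-modules~\(\Hilm_v\) with \([\Hilm_v]=f[p_v]\) satisfy the hypothesis of Proposition~\ref{pro:lift_to_K0_K1}.  That proposition then produces a proper \(\Cst(G),B\)-correspondence~\(\Hilm\) inducing~\(f\) on \(\prop\) and~\(\varphi_1\) on \(\K_1(\Cst(G),\Idealsc)\).  Since \(\braket{\Hilm}{\Hilm}=\bigvee_{v\in V}\psi(\langle v\rangle)=\psi^\infty(V)=B\), this~\(\Hilm\) is full.

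Because~\(\Cst(G)\) is separable, hence \(\sigma\)\nb-unital, Proposition~\ref{pro:proper_corr_to_homomorphism} gives a \Star{}homomorphism \(\varphi\colon\Cst(G)\to B\otimes\Comp\) with \(\Hilm_\varphi\cong\Hilm\).  Its induced ideal map is \(\Ideals(\Hilm)\), which on \(\Idealsc\) equals~\(\psi\) and hence extends to~\(\psi^\infty\); since~\(\Hilm\) is full, \(\psi^\infty\) is onto, so~\(\varphi\) is \(X\)\nb-equivariant and yields a class \([\varphi]\in\KK(X)(\Cst(G),B)\).  By Theorem~\ref{the:prop_vs_K0plus} the maps induced by~\(\varphi\) on the diagrams \(\K_0(\Cst(G),\Idealsc)\) and \(\K_1(\Cst(G),\Idealsc)\) are~\(\varphi_0\) and~\(\varphi_1\), both isomorphisms.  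By continuity of \(\K\)\nb-theory, \(\varphi\) is then an isomorphism on \(\K_*\) of every ideal of~\(\Cst(G)\) (each a directed union of ideals in \(\Idealsc\)), and by the five lemma applied to ideal extensions it is an isomorphism on \(\K_*\) of every subquotient attached to a locally closed subset of~\(X\); hence~\(\varphi\) is an isomorphism on the universal filtrated \(\K\)\nb-theory over~\(X\).  Since \(\Cst(G)\) and~\(B\) lie in the bootstrap class over~\(X\), the mapping cone of~\(\varphi\) lies in that class and has vanishing filtrated \(\K\)\nb-theory, so it is \(\KK(X)\)\nb-contractible by the universal coefficient machinery of~\cite{Meyer-Nest:Bootstrap}; thus \([\varphi]\) is a \(\KK(X)\)\nb-equivalence.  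Finally, \(\Cst(G)\otimes\Comp\) and \(B\otimes\Comp\) are separable, nuclear, stable, strongly purely infinite \Cstar{}algebras over~\(X\) with all ideals in the bootstrap class (\(\Cst(G)\) is purely infinite, hence strongly purely infinite as a graph \Cstar{}algebra, and its ideals are Morita equivalent to graph \Cstar{}algebras, hence nuclear and satisfy the \textup{UCT}), and they are \(\KK(X)\)\nb-equivalent; so Kirchberg's Classification Theorem~\cite{Kirchberg:Michael} provides an \(X\)\nb-equivariant isomorphism \(\Cst(G)\otimes\Comp\cong B\otimes\Comp\), that is, \(\Cst(G)\) and~\(B\) are stably isomorphic.

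The step I expect to be the main obstacle is the passage ``isomorphism on the coarse diagrams \(\K_*(\Cst(G),\Idealsc)\) \(\implies\) isomorphism on the genuinely richer universal filtrated \(\K\)\nb-theory \(\implies\) \(\KK(X)\)\nb-equivalence''.  By Remark~\ref{rem:invariant_infinite_ideal_lattice} the invariant we control is strictly weaker than the universal one, so this step cannot be performed abstractly at the level of the invariant; it has to be carried out for the concrete homomorphism~\(\varphi\), whose functoriality automatically intertwines all the extra natural structure (Mayer--Vietoris boundary maps and the like), and it requires care with the three layers \(\Idealsc\subseteq\Idealsc^\infty=\Ideals\) as well as with the bootstrap-class and strong-pure-infiniteness hypotheses that trigger Kirchberg's theorem.
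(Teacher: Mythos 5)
Your construction of the correspondence~\(\Hilm\) lifting \((\varphi_0,\varphi_1)\), the extension of~\(\psi\) to the full ideal lattices via Proposition~\ref{pro:from_compact_to_all_ideals}, and the final appeal to Kirchberg's theorem all match the paper's proof. The genuine gap is in the middle step, exactly where you yourself flag the main obstacle: you pass from ``\(\varphi\) induces isomorphisms on \(\K_*\) of all ideals (and subquotients)'' to ``\([\varphi]\) is a \(\KK(X)\)\nb-equivalence'' by asserting that the mapping cone has vanishing filtrated \(\K\)\nb-theory and is therefore \(\KK(X)\)\nb-contractible ``by the universal coefficient machinery of \cite{Meyer-Nest:Bootstrap}.'' For an infinite, non-Alexandrov space~\(X\) such as \(\Prim(B)\) here, no such machinery is available in that reference. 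The underlying problem is not merely that the diagram \(\K_*(\blank,\Idealsc)\) fails to be universal (Remark~\ref{rem:invariant_infinite_ideal_lattice}); it is that for infinite~\(X\) the points need not have minimal open neighbourhoods, so the representable functors \(\KK(X)(i_x(\C),\blank)\) generating the bootstrap class are \emph{not} of the form \(\K_*(\blank(U))\) for open \(U\subseteq X\), and \(\KK(X)\) does not interact well with the approximation of~\(X\) by finite quotients (there are \(\varprojlim^1\) obstructions). Consequently, knowing that a concrete morphism induces isomorphisms on \(\K_*\) of every ideal does not, by any cited or standard result, make its cone \(\KK(X)\)\nb-contractible.

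The paper circumvents this by a detour that your argument omits and that cannot be reduced to a citation of \cite{Meyer-Nest:Bootstrap}: the UCT upgrades the \(\K\)\nb-isomorphisms on the ideals in \(\Idealsc\) to \emph{E\nb-theory} equivalences on those ideals; the theorems of Dadarlat--Meyer \cite{Dadarlat-Meyer:E_over_space}*{Theorems 3.2 and~3.10} then assemble these, using that the quasi-compact open subsets form a basis, into a \(\Prim \Cst(G)\)\nb-equivariant E\nb-theory equivalence (E\nb-theory over a space was designed precisely to behave well under such limits); and finally Gabe's theorem \cite{Gabe:Lifting_theorems_for_cp_maps}*{Theorem~6.2} converts the \(\mathrm E(X)\)\nb-equivalence into a \(\KK(X)\)\nb-equivalence for these separable nuclear algebras, which is what Kirchberg's theorem needs. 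Your observation that a genuine \Star{}homomorphism automatically intertwines all the extra natural structure is true but does not substitute for this assembly step; to repair the proof you should replace the filtrated-\(\K\)\nb-theory/bootstrap argument by the E\nb-theory argument (or supply an independent proof that \(\K\)\nb-isomorphisms on a basis of quasi-compact ideals force a \(\KK(X)\)\nb-equivalence for infinite~\(X\), which is not in the literature you cite).
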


The proof of the converse direction (``vice versa'') is easy.  The
proof of the interesting direction requires some preparatory
results.

\begin{lemma}
  \label{lem:from_compact_to_all_subsets}
  Let~\(X\) be a topological space.  Let
  \(\Openc(X)\subseteq\Open(X)\) be the set of compact-open subsets.
  Assume that~\(\Openc(X)\) is a basis.  Let \(\Fam(X)\) denote the
  set of subsets of \(\Openc(X)\) which are closed under finite
  unions and subsets.  Then the map
  \(U\mapsto F_U\defeq\{V\in\Openc(X)\mid V\subseteq U\}\) is an
  isomorphism of partially ordered sets \(\Open(X)\cong\Fam(X)\)
  with inverse \(S\mapsto\bigcup S\).
\end{lemma}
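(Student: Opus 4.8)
The plan is to show that the two maps in the statement---$U \mapsto F_U$ from $\Open(X)$ to $\Fam(X)$, and $S \mapsto \bigcup S$ from $\Fam(X)$ to $\Open(X)$---are well defined, order-preserving, and mutually inverse; since a pair of mutually inverse monotone maps is automatically an isomorphism of partially ordered sets, this is all that is needed.

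First I would verify well-definedness and monotonicity. For $U \in \Open(X)$ the set $F_U$ lies in $\Fam(X)$: it is closed under finite unions because a finite union of compact-open subsets of~$U$ is again compact, open and contained in~$U$ (the empty union yields $\emptyset \in F_U$, consistent with the convention that members of $\Fam(X)$ contain~$\emptyset$), and it is trivially closed under passing to smaller members of $\Openc(X)$. Conversely $\bigcup S$ is open as a union of open sets. Both assignments are evidently monotone: $U_1 \subseteq U_2$ implies $F_{U_1} \subseteq F_{U_2}$, and $S_1 \subseteq S_2$ implies $\bigcup S_1 \subseteq \bigcup S_2$.

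It then remains to check that the two composites are the identity. For $\bigcup F_U = U$ the inclusion ``$\subseteq$'' is immediate, while ``$\supseteq$'' is precisely where the hypothesis that $\Openc(X)$ is a basis is used: each point of~$U$ lies in some $V \in \Openc(X)$ with $V \subseteq U$. For $F_{\bigcup S} = S$ with $S \in \Fam(X)$, the inclusion $S \subseteq F_{\bigcup S}$ holds because each element of~$S$ is a compact-open subset of $\bigcup S$; the reverse inclusion is the one place where compactness of the basic sets is used. Indeed, given $V \in \Openc(X)$ with $V \subseteq \bigcup S$, the family~$S$ is an open cover of the compact set~$V$, so there are $V_1,\dots,V_n \in S$ with $V \subseteq V_1 \cup \dots \cup V_n$; then $W \defeq V_1 \cup \dots \cup V_n \in S$ since $S$ is closed under finite unions, and $V \in S$ since $V \in \Openc(X)$, $V \subseteq W$, and $S$ is closed under subsets. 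I do not expect any real obstacle here: everything except this last finite-subcover argument is bookkeeping, and the two hypotheses---$\Openc(X)$ a basis, and its members compact---enter exactly once each, in the two halves of the inverse check.
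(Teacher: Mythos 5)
Your proof is correct and follows essentially the same route as the paper's: both check that the two maps are well defined and monotone, use the basis hypothesis for \(\bigcup F_U = U\), and use a finite subcover together with closure under finite unions and subsets for \(F_{\bigcup S}=S\). Your write-up merely spells out the well-definedness of \(F_U\) in more detail than the paper does.
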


\begin{proof}
  The maps \(F\colon \Open(X)\to\Fam(X)\) and
  \({\bigcup}\colon \Fam(X) \to \Open(X)\) are well defined
  monotone maps.  One composite is the identity as
  \(U=\bigcup {}\setgiven{V\in\Openc(X)}{V\subseteq U}\) for all
  \(U\in\Open(X)\) by assumption on~\(X\); the other is the identity
  because
  \[
   S\subseteq\setgiven{V\in\Openc(X)}{V\subseteq\bigcup S}\subseteq S
  \]
  for all \(S\in\Fam(X)\),
  where the first inclusion is obvious and the second is seen as
  follows: if \(V\subseteq\bigcup S\)
  is compact, then \(V\subseteq\bigcup T\)
  for some finite subset \(T\subseteq S\)
  and then \(V\in S\) by assumption on~\(S\).
\end{proof}

\begin{proposition}
  \label{pro:from_compact_to_all_ideals}
  Let~\(B\)
  be a separable purely infinite \Cstar{}algebra such that the
  topology on~\(\Prim(B)\)
  has a basis consisting of compact-open subsets.  Let~\(\Fam(\Idealsc(B))\)
  denote the set of subsets of~\(\Idealsc(B)\)
  which are closed under finite unions and subsets.  Then
  \(I\mapsto F_I\defeq\{J\in\Idealsc(B)\mid J\subseteq I\}\)
  defines an isomorphism of partially ordered sets
  \(\Ideals(B)\cong\Fam(\Idealsc(B))\)
  with inverse \(S\mapsto\overline{\sum S}\).
\end{proposition}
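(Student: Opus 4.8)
The plan is to reduce the statement to Lemma~\textup{\ref{lem:from_compact_to_all_subsets}} applied to \(X=\Prim(B)\), by identifying \(\Ideals(B)\) with \(\Open(\Prim(B))\) and \(\Idealsc(B)\) with the lattice \(\Openc(\Prim(B))\) of quasi-compact open subsets. First I would invoke the standard lattice isomorphism \(\Ideals(B)\cong\Open(\Prim(B))\) sending an ideal to its hull–kernel open subset; it carries arbitrary suprema \(\bigvee_{I\in S} I = \overline{\sum_{I\in S} I}\) to unions.

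The point that needs an argument is that under this isomorphism the subset \(\Idealsc(B)\subseteq\Ideals(B)\) corresponds exactly to \(\Openc(\Prim(B))\). By \cite{Pasnicu-Rordam:Purely_infinite_rr0}*{Proposition~2.7} (this uses that \(B\) is separable and purely infinite; see the remark following Lemma~\textup{\ref{lem:ideal_gen_by_projection}}), an ideal \(I\idealin B\) lies in \(\Idealsc(B)\) if and only if \(\widehat I\subseteq\widehat B\) is quasi-compact. Now quasi-compactness of an open subset \(U\) of a space is equivalent to \(U\) being a compact element of the frame of open subsets — whenever \(U\le\bigvee_\alpha V_\alpha\) then \(U\le V_{\alpha_1}\vee\dotsb\vee V_{\alpha_n}\) for finitely many indices — so it depends only on the class of \(U\) in the lattice of open sets. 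Since \(\Open(\widehat B)\cong\Ideals(B)\cong\Open(\Prim(B))\) as lattices, quasi-compactness of \(\widehat I\) is equivalent to quasi-compactness of the open subset of \(\Prim(B)\) associated with \(I\). Hence \(\Idealsc(B)\) corresponds to \(\Openc(\Prim(B))\), and by hypothesis \(\Openc(\Prim(B))\) is a basis for \(\Prim(B)\).

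With these identifications I would then apply Lemma~\textup{\ref{lem:from_compact_to_all_subsets}} with \(X=\Prim(B)\), getting an order isomorphism \(\Open(\Prim(B))\cong\Fam(\Prim(B))\), where \(\Fam(\Prim(B))\) is the set of subsets of \(\Openc(\Prim(B))\) closed under finite unions and subsets, with \(U\mapsto\{V\in\Openc(\Prim(B)) : V\subseteq U\}\) and inverse \(S\mapsto\bigcup S\). The poset isomorphism \(\Idealsc(B)\cong\Openc(\Prim(B))\) induces an isomorphism \(\Fam(\Idealsc(B))\cong\Fam(\Prim(B))\): a union \(V_1\cup V_2\) of quasi-compact opens is again quasi-compact and corresponds to \(J_1+J_2\), which lies in \(\Idealsc(B)\) by~\eqref{eq:supp_additive}, so being closed under finite unions matches being closed under the operation \({+}\). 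Composing
\[
  \Ideals(B)\;\cong\;\Open(\Prim(B))\;\cong\;\Fam(\Prim(B))\;\cong\;\Fam(\Idealsc(B))
\]
and chasing an element through, one finds that \(I\) is sent to \(\{J\in\Idealsc(B) : J\subseteq I\}=F_I\), and a family \(S\) is sent backwards to \(\overline{\sum S}\), because the union of the open subsets associated with \(S\) corresponds to the supremum \(\overline{\sum S}\) of the ideals in \(S\). This is exactly the asserted isomorphism of partially ordered sets.

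The only nonformal input beyond Lemma~\textup{\ref{lem:from_compact_to_all_subsets}} is the identification \(\Idealsc(B)=\Openc(\Prim(B))\), so the step I expect to require the most care is transferring the Pasnicu--R\o{}rdam characterization from \(\widehat B\) to \(\Prim(B)\), i.e.\ the observation that quasi-compactness of an open set is detected by the frame of open sets; the rest is bookkeeping with the two lattice isomorphisms.
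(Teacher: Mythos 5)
Your proposal is correct and follows essentially the same route as the paper's proof: cite the Pasnicu--R\o{}rdam characterisation of \(\Idealsc(B)\) as the ideals with quasi-compact \(\widehat{I}\), identify \(\Ideals(B)\) with \(\Open(\Prim(B))\), and apply Lemma~\ref{lem:from_compact_to_all_subsets}. The paper states this in two sentences; you have merely filled in the bookkeeping (the frame-theoretic transfer of quasi-compactness and the compatibility of finite unions with sums of ideals), all of which is accurate.
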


\begin{proof}
  \cite{Pasnicu-Rordam:Purely_infinite_rr0}*{Proposition~2.7} says
  that~\(\Idealsc(B)\)
  is the set of all ideals \(I\idealin B\)
  for which~\(\widehat{I}\)
  is quasi-compact.  Now we can deduce the result from
  Lemma~\ref{lem:from_compact_to_all_subsets}, using the isomorphism
  \(\Open\bigl(\Prim(A)\bigr) \cong \Ideals(A)\),
  \(U \mapsto \bigcap\Prim(A)\setminus U\)
  (see \cite{Dixmier:Cstar-algebres}*{\S3.2}).
\end{proof}

\begin{proof}[Proof of Theorem~\textup{\ref{the:pi_graph_classify}}]
  Since \(\Cst(G)\) and~\(B\) are purely infinite,
  Corollary~\ref{cor:Hom_prop_purely_infinite} shows that an
  isomorphism
  \(\varphi_0\colon \K_0(\Cst(G),\Idealsc) \congto
  \psi^*\K_0(B,\Idealsc)\) is equivalent to an isomorphism
  \(\prop(\Cst(G)) \cong \prop(B)\).  Then the obstruction class in
  the statement is well defined.  If it vanishes,
  Proposition~\ref{pro:lift_to_K0_K1} gives a proper
  \(\Cst(G),B\)-correspondence \(\Hilm\) that induces the given
  isomorphisms~\(\varphi_0\) and~\(\varphi_1\).  The proper
  correspondence~\(\Hilm\) has a class in the equivariant KK-group
  \(\KK(\Prim \Cst(G); \Cst(G),B)\).  It induces an isomorphism on
  the \(\K\)\nb-theory of all ideals in
  \(\Idealsc(B) \cong \Idealsc(\Cst(G))\).  By the Universal
  Coefficient Theorem, it induces E\nb-theory equivalences on these
  ideals.

  Every ideal in \(\Cst(G)\)
  or in~\(B\)
  is generated by its projections.  This follows for~\(\Cst(G)\)
  because every ideal in a purely infinite graph \Cstar{}algebra is
  gauge-invariant and for~\(B\)
  by \cite{Pasnicu-Rordam:Purely_infinite_rr0}*{Proposition~2.7}.  It
  also follows from
  \cite{Pasnicu-Rordam:Purely_infinite_rr0}*{Proposition~2.7} that
  \(\Prim(\Cst(G))\)
  has a basis consisting of quasi-compact open subsets.  Hence
  Proposition~\ref{pro:from_compact_to_all_ideals} shows that~\(\psi\)
  extends canonically to a lattice isomorphism
  \(\Ideals(\Cst(G))\congto\Ideals(B)\).
  Thus \(\Prim(\Cst(G)) \cong \Prim(B)\).
  The quasi-compact open subsets form a basis for the topology on
  these primitive ideal spaces, and the correspondence~\(\Hilm\)
  induces E\nb-theory equivalences on those ideals that correspond to
  quasi-compact open subsets.  Thus
  \cite{Dadarlat-Meyer:E_over_space}*{Theorems 3.2 and~3.10} imply
  that~\(\Hilm\)
  induces a \(\Prim \Cst(G)\)-equivariant
  E\nb-theory equivalence.  Then the corresponding equivariant
  KK-class is also invertible by
  \cite{Gabe:Lifting_theorems_for_cp_maps}*{Theorem~6.2}.  Hence
  Kirchberg's Classification Theorem allows to lift it to a stable
  isomorphism between \(\Cst(G)\) and~\(B\).
\end{proof}

\begin{remark}
  \label{rem:stable_iso_purely_infinite}
  The constructed stable isomorphism
  \(\Cst(G)\otimes\Comp\cong B\otimes\Comp\)
  also gives a proper \(\Cst(G),B\)-correspondence.
  So it corresponds to a particular choice of the proper Hilbert
  \(B\)\nb-modules~\(\Hilm_v\)
  and the unitaries~\(U_v\)
  in our description of such correspondences.  Of course, it is very
  hard to find these unitaries directly.
\end{remark}

If the target $\Cst$\nb-algebra~$B$ is known to be a graph
$\Cst$\nb-algebra as well, then the proof of
Theorem~\ref{the:pi_graph_classify} simplifies considerably.  Then
Theorem~\ref{the:graph_homotopy_equivalence} and
Proposition~\ref{pro:from_compact_to_all_ideals} show that the
$\Cst$\nb-correspondence above is an ideal-preserving stable
homotopy equivalence.  This forces it to be an ideal-preserving
$\KK$-equivalence.

\section{Homotopy classes of correspondences}
\label{sec:homotopy_corres}

In this section, we first study when two proper
\(\Cst(G),B\)\nb-correspondences \(\Hilm\) and~\(\Hilm'\) are
homotopic.  Secondly, we use this to prove a criterion for two graph
\Cstar{}algebras to be homotopy equivalent.

\begin{lemma}
  Let \(\Hilm\) and~\(\Hilm'\) be homotopic.  Then they induce the
  same maps
  \begin{alignat*}{2}
    \varphi_0\colon \prop(\Cst(G)) &\to \prop(B),&\qquad
    \psi\colon \Idealsc^\infty(\Cst(G)) &\to \Idealsc^\infty(B),\\
    \varphi_1\colon \K_1(\Cst(G),\Idealsc) &\to
    \psi^*\K_1(B,\Idealsc).
  \end{alignat*}
  The entire homotopy~\((\Hilm_t)_{t\in[0,1]}\) preserves all ideals
  in \(\Idealsc^\infty(\Cst(G))\) in the sense that each~\(\Hilm_t\)
  induces the same map
  \(\Idealsc^\infty(\Cst(G)) \to \Idealsc^\infty(B)\).
\end{lemma}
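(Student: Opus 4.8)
The plan is to realise a homotopy as a single proper correspondence and then to push everything through evaluation \Star{}homomorphisms. By definition, a homotopy from $\Hilm_0\defeq\Hilm$ to $\Hilm_1\defeq\Hilm'$ is a proper $\Cst(G),\Cont([0,1],B)$-correspondence $\widetilde\Hilm$ together with isomorphisms $\widetilde\Hilm\otimes_{\ev_t}B\cong\Hilm_t$ for $t=0,1$. I would extend this by putting $\Hilm_t\defeq\widetilde\Hilm\otimes_{\ev_t}B$ for all $t\in[0,1]$; each is a proper $\Cst(G),B$-correspondence, since the composite of proper correspondences is proper, and this agrees with the given notation at the endpoints. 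Functoriality of composition then gives $(\Hilm_t)_*=(\ev_t)_*\circ\widetilde\Hilm_*$ as homomorphisms $\prop(\Cst(G))\to\prop(B)$, where $(\ev_t)_*$ is the map induced by the \Star{}homomorphism $\ev_t\colon\Cont([0,1],B)\to B$; and, after restricting to ideals, one gets the analogous identities on $\K_1$.

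The engine is the easy remark that homotopic \Star{}homomorphisms $\phi_0\simeq\phi_1\colon D\rightrightarrows D'$ induce equal maps on $\prop$ and on $\K$\nb-theory: for $\K$\nb-theory this is standard homotopy invariance, and for $\prop$ it holds because $\phi_0(p)$ and $\phi_1(p)$ are then homotopic, hence Murray--von Neumann equivalent, projections in $\Mat_\infty(D')$ for each projection $p\in\Mat_\infty(D)$. Any two evaluations $\ev_s,\ev_t\colon\Cont([0,1],B)\rightrightarrows B$ are homotopic (a homotopy being the reparametrisation $f\mapsto\bigl(u\mapsto f((1-u)s+ut)\bigr)$), so $(\ev_s)_*=(\ev_t)_*$ on $\prop$, whence $(\Hilm_t)_*$ is independent of $t$. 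In particular $\varphi_0=(\Hilm_0)_*$ agrees with $(\Hilm_1)_*$. The induced map $\psi_t$ on $\Idealsc(\Cst(G))$ is built from $(\Hilm_t)_*$ by the support map as in the proof of Theorem~\ref{the:prop_vs_K0plus}, so it too is independent of $t$; hence so is its canonical supremum-preserving extension to $\Idealsc^\infty$ from Remark~\ref{rem:Idealsc_infty}, which is the map $\Idealsc^\infty(\Cst(G))\to\Idealsc^\infty(B)$ induced by $\Hilm_t$. This proves the claim about $\psi$ and the final assertion that the whole homotopy $(\Hilm_t)_{t\in[0,1]}$ preserves the ideals in $\Idealsc^\infty(\Cst(G))$.

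For $\K_1$, write $\psi\defeq\psi_0=\psi_1$. As every proper $\Cst(G),\Cont([0,1],B)$-correspondence is $\Idealsc(G)$\nb-covariant by~\eqref{eq:over_Idealsc}, for each $W\in\Idealsc(G)$ the correspondence $\widetilde\Hilm$ restricts to a proper $\Cst(G)_W,\widetilde\psi(W)$-correspondence, where $\widetilde\psi(W)\idealin\Cont([0,1],B)$ is the ideal it induces; this yields $\widetilde\Hilm_*|_W\colon\K_1(\Cst(G)_W)\to\K_1(\widetilde\psi(W))$. Since $\psi_t(W)=\psi(W)$ for all $t$, evaluation restricts to a \Star{}homomorphism $\ev_t\colon\widetilde\psi(W)\to\psi(W)$, and $\Hilm_t|_{\Cst(G)_W}$ is the tensor product of that restriction with $\psi(W)$ over $\ev_t$, so that $(\Hilm_t)_*|_W=(\ev_t)_*\circ\widetilde\Hilm_*|_W$ on $\K_1$. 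Now every $f\in\widetilde\psi(W)\subseteq\Cont([0,1],B)$ has $f(s)=\ev_s(f)\in\psi(W)$ for all $s$, so $\widetilde\psi(W)\subseteq\Cont([0,1],\psi(W))$, and this inclusion is a homotopy of \Star{}homomorphisms from $\ev_0|_{\widetilde\psi(W)}$ to $\ev_1|_{\widetilde\psi(W)}$; by the same remark, $(\ev_0)_*=(\ev_1)_*$ on $\K_1(\widetilde\psi(W))$. Running over all $W\in\Idealsc(G)$ shows that $\Hilm$ and $\Hilm'$ induce the same diagram morphism $\varphi_1\colon\K_1(\Cst(G),\Idealsc)\to\psi^*\K_1(B,\Idealsc)$.

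The step I expect to be the main obstacle is the $\K_1$ bookkeeping in the last paragraph: one must make precise that the restriction of $\widetilde\Hilm$ to $\Cst(G)_W$, tensored over $\ev_t$ with $\psi(W)$, really recovers the restriction of $\Hilm_t$ that induces $(\Hilm_t)_*|_W$, and one must be careful that evaluation maps $\widetilde\psi(W)$ into the \emph{$t$\nb-independent} ideal $\psi(W)$ --~which is exactly why the $\Idealsc^\infty$ part has to be settled first. One could instead sidestep $\widetilde\psi(W)$ by observing that it is isomorphic to $\Cont([0,1],\psi(W))$, since the Hilbert $\Cont([0,1],B)$-modules describing $\widetilde\Hilm$ may be taken constant up to isomorphism once $\prop$ is known to be homotopy invariant (a path of projections $p(\cdot)$ deforming to the constant path $p(0)$ via $(s,t)\mapsto p(\max(t-s,0))$); but checking the functoriality of the restriction seems unavoidable either way.
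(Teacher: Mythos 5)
Your proposal is correct and follows essentially the same route as the paper's (much terser) proof: both rest on the facts that homotopic projections are Murray--von Neumann equivalent, so $\prop$ and hence the support map are homotopy invariant, and that $\K_1$ is homotopy invariant, applied after restricting to the ideals $\Cst(G)_W$ and $\psi(W)$. Your version merely makes explicit, via the evaluation homomorphisms $\ev_t$ and the inclusion $\widetilde\psi(W)\subseteq\Cont([0,1],\psi(W))$, the ideal bookkeeping that the paper leaves implicit.
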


\begin{proof}
  The maps \(\prop(\Cst(G)) \to \prop(B)\) induced by \(\Hilm\)
  and~\(\Hilm'\) are the same because homotopic projections are
  Murray--von Neumann equivalent.  Therefore, \(\Hilm\)
  and~\(\Hilm'\) --~and also~\(\Hilm_t\) for \(t\in [0,1]\)~--
  induce the same map
  \(\Idealsc^\infty(\Cst(G)) \to \Idealsc^\infty(B)\).
  Since~\(\K_1\) is defined using homotopy of unitaries, it follows
  that the diagram morphisms
  \(\K_1(\Cst(G),\Idealsc) \to \psi^*\K_1(B,\Idealsc)\) induced by
  \(\Hilm\) and~\(\Hilm'\) are equal as well.  Even more, they are
  constant along the homotopy~\((\Hilm_t)\).
\end{proof}

Let \(\Hilm\) and~\(\Hilm'\) be \(\Cst(G),B\)\nb-correspondences
that induce the same maps
\[
  \varphi_0\colon \prop(\Cst(G)) \to \prop(B),\qquad
  \varphi_1\colon \K_1(\Cst(G),\Idealsc) \to
  \psi^*\K_1(B,\Idealsc).
\]
The first condition implies that \(\Hilm\) and~\(\Hilm'\) also induce
the same map \(\psi\colon \Idealsc(\Cst(G)) \to \Idealsc(B)\), which
is needed to formulate the second condition.  Describe \(\Hilm\)
and~\(\Hilm'\) by families \(\bigl((\Hilm_v),(U_v)\bigr)\) and
\(\bigl((\Hilm_v'),(U'_v)\bigr)\), respectively, as in
Theorem~\ref{the:lift_prop_CB}.  Since \([\Hilm_v] = [\Hilm_v']\) in
\(\prop(B)\), the Hilbert \(B\)\nb-modules \(\Hilm_v\)
and~\(\Hilm'_v\) are isomorphic.  For simplicity, we
replace~\(\Hilm'\) by an isomorphic correspondence so that these
Hilbert modules become equal.  Then
Lemma~\ref{lem:difference_unitaries} gives unitaries
\(\Upsilon_v\in\Comp(\Hilm_v)\) for \(v\in V_\reg\) with
\(U_v' = \Upsilon_v U_v\).

Next, we describe an obstruction for \(\Hilm\) and~\(\Hilm'\) to be
homotopic.  This obstruction will turn out to be sharp under an
extra assumption on~\(B\).  Let \(v\in V_\reg\).  Let
\(\langle v \rangle \in \Idealsc(G)\) be the hereditary and
saturated subset generated by~\(v\).  Since this corresponds to the
ideal in~\(\Cst(G)\) generated by~\(p_v\), it follows that
\(\psi(\langle v\rangle)\idealin B\) is the ideal generated by the
matrix coefficients of vectors in~\(\Hilm_v\).  Thus~\(\Hilm_v\) is
a Morita--Rieffel equivalence between \(\Comp(\Hilm_v)\) and
\(\psi(\langle v\rangle)\).  As in the proof of
Proposition~\ref{pro:lift_to_K0_K1}, we let~\([\Upsilon_v]\) denote
the class of~\(\Upsilon_v\) in
\(\K_1(\psi(\langle v\rangle)) \cong \K_1(\Comp(\Hilm_v))\).  We
combine these classes for \(v\in V_\reg\) into a morphism of
diagrams
\([\Upsilon]\colon \Z[V_\reg,\Idealsc] \to \psi^* \K_1(B,\Idealsc)\)
as in~\eqref{eq:difference_class_as_diagram_morphism}.  Since
\(\Hilm\) and~\(\Hilm'\) induce the same map~\(\varphi_1\),
\eqref{eq:difference_class_boundary} implies
\([\Upsilon]\circ \kappa=0\).

We have described the group
\[
\Eext^1\bigl(G,\psi^*\K_1(B,\Idealsc)\bigr) \defeq
\Ext^1_{\Idealsc(G)}\bigl(\K_0(\Cst(G),\Idealsc),\psi^*\K_1(B,\Idealsc)\bigr)
\]
as the first cohomology group of the cochain complex
in~\eqref{eq:compute_Ext_Idealsc}.  The equation
\([\Upsilon]\circ \kappa=0\)
says that~\([\Upsilon]\)
is a cocycle and thus defines a class in
\(\Eext^1\bigl(G,\psi^*\K_1(B,\Idealsc)\bigr)\).

\begin{proposition}
  \label{pro:Ext1_to_homotopy}
  If \(\Hilm\) and~\(\Hilm^*\) are homotopic
  \(\Cst(G),B\)-correspondences, then~\([\Upsilon]\) becomes~\(0\)
  in \(\Eext^1\bigl(G,\psi^*\K_1(B,\Idealsc)\bigr)\).  In
  particular, \([\Upsilon]\) does not depend on the choices made in
  its construction.
\end{proposition}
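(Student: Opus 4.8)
The plan is to extract a homotopy of correspondences into concrete data and read off a coboundary. Suppose $\Hilm$ and $\Hilm'$ are homotopic. A homotopy is, by definition, an $\Cst(G),\Cont([0,1],B)$-correspondence $\Hilm[H]$ with $\Hilm[H]\otimes_{\ev_t}B\cong\Hilm_t$ for $t=0,1$, where $\Hilm_0=\Hilm$ and $\Hilm_1=\Hilm'$. First I would apply Theorem~\ref{the:lift_prop_CB} to the row-finite graph~$G$ and the \Cstar{}algebra $\Cont([0,1],B)$: the homotopy is described by proper Hilbert $\Cont([0,1],B)$-modules $\Hilm[H]_v$ for $v\in V$ and unitaries $\mathcal U_v\colon\bigoplus_{e\in E^v}\Hilm[H]_{\s(e)}\congto\Hilm[H]_v$ for $v\in V_\reg$. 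Evaluation at $t\in[0,1]$ gives the families describing $\Hilm_t$. Since homotopic projections in $\Mat_\infty(B)$ are Murray--von Neumann equivalent and $[\Hilm_v]=\varphi_0[p_v]=[\Hilm'_v]$ is constant in~$t$, I may, after replacing $\Hilm[H]$ by an isomorphic correspondence, arrange that $\Hilm[H]_v\cong\Cont([0,1],\Hilm_v)$ for all~$v$ and that the two endpoint families literally agree with $\bigl((\Hilm_v),(U_v)\bigr)$ and $\bigl((\Hilm_v),(\Upsilon_v U_v)\bigr)$ respectively; this uses the rigidity discussed right after Theorem~\ref{the:lift_prop_CB} (the choice of Hilbert modules in their isomorphism class does not matter, and one transports the $U_v$ along accordingly).

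Next I would unwind the explicit form of a homotopy between correspondences given in the text preceding Lemma~\ref{lem:approximate_isomorphism}. Under the identifications above, the homotopy yields for each $v\in V_\reg$ a norm-continuous path $(\Upsilon_{v,t})_{t\in[0,1]}$ of unitaries in $\Comp(\Hilm_v)$ with $\Upsilon_{v,0}=1$ (up to the endpoint isomorphisms $W_{v,0}$), $\Upsilon_{v,1}$ conjugate to $\Upsilon_v$ via the endpoint isomorphisms $(W_{v,1})_{v\in V}$, and the compatibility relations with the $U_v$'s displayed in that passage. Because $\Upsilon_{v,t}$ is a continuous path from $1$ to a unitary conjugate to $\Upsilon_v$, its class $[\Upsilon_v]$ in $\K_1(\Comp(\Hilm_v))\cong\K_1\bigl(\psi(\langle v\rangle)\bigr)$ is killed by the \emph{choice of trivialization coming from the endpoint isomorphisms}; more precisely, the family $(W_{v,1})_{v\in V}$ provides, via the recipe of Lemma~\ref{lem:difference_unitaries}, an element
\[
  \beta\colon \Z[V,\Idealsc]\to\psi^*\K_1(B,\Idealsc),\qquad
  \delta_v\mapsto[W_{v,1}],
\]
defined as in the proof of Proposition~\ref{pro:ZV_projective}, using that $\Comp(\Hilm_v)$ is Morita--Rieffel equivalent to $\psi(\langle v\rangle)$ so that $[W_{v,1}]\in\K_1(\psi(\langle v\rangle))$. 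The endpoint relation $\Upsilon_{v,1}=W_{v,1}\circ\Upsilon_v U_v\circ\bigl(\bigoplus_{e\in E^v}W_{\s(e),1}^*\bigr)\circ U_v^*$, after passing to $\K_1$-classes and using $[\Upsilon_{v,1}]=[\Upsilon_{v,0}]=0$ (same homotopy class of unitary), should give exactly
\[
  [\Upsilon] = (\id-M_G^{\Idealsc})^*(\beta)
\]
as morphisms $\Z[V_\reg,\Idealsc]\to\psi^*\K_1(B,\Idealsc)$, i.e.\ $[\Upsilon]$ is the coboundary of $\beta$ in the cochain complex~\eqref{eq:compute_Ext_Idealsc}. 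Hence $[\Upsilon]$ represents $0$ in $\Eext^1\bigl(G,\psi^*\K_1(B,\Idealsc)\bigr)$, and independence of all choices is immediate since the class in the $\Ext^1$-group is what survives.

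The main obstacle is the bookkeeping in the previous paragraph: tracking how the adjacency-matrix twist $M_G^W$ enters when one pushes the endpoint relation through $\K_1$. Concretely, $[\Upsilon_v]$ involves $W_{v,1}$ at range~$v$ together with the $W_{\s(e),1}$ at the sources of the edges into~$v$, conjugated by $U_v$; conjugation by the unitary $U_v$ does not change $\K_1$-classes, so $[\Upsilon_v] = [W_{v,1}] - \sum_{e\in E^v}[W_{\s(e),1}]$ in $\K_1(\psi(\langle v\rangle))$, and the sum $\sum_{e\in E^v}\delta_{\s(e)}$ is precisely $M_G^V(\delta_v)$ — but one must be careful that $[W_{\s(e),1}]$ is taken in $\K_1(\psi(\langle\s(e)\rangle))$ and then mapped into $\K_1(\psi(\langle v\rangle))$ via the diagram structure, which is exactly what the composite $(\id-M_G^{\Idealsc})^*(\beta)$ does at $\delta_v$. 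I expect this to go through cleanly once the Morita--Rieffel identifications $\K_1(\Comp(\Hilm_v))\cong\K_1(\psi(\langle v\rangle))$ are fixed compatibly with the inclusions $\Hilm_{\s(e)}\hookrightarrow\bigoplus_{e\in E^v}\Hilm_{\s(e)}\xrightarrow{U_v}\Hilm_v$, and this compatibility is forced by the fact that $U_v$ is itself a unitary of correspondences. A final remark: the argument is essentially the ``relative'' version of the computation in the proof of Proposition~\ref{pro:obstruction_class_well-defined}, with the role of ``$\Hilm$ versus $\Hilm'$'' played by ``$\Hilm_0$ versus $\Hilm_1$ along the homotopy,'' so one can cite that proof for the identity $[\Upsilon]\circ\kappa$-type manipulations and focus only on producing the cochain $\beta$.
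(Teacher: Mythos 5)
Your proposal is correct and follows essentially the same route as the paper: extract from the homotopy the paths \((\Upsilon_{v,t})\) and endpoint unitaries \(W_{v,j}\), pass to \(\K_1\), and exhibit \([\Upsilon]\) as \((\id-M_G^{\Idealsc})^*\) of a cochain on \(\Z[V,\Idealsc]\) built from the endpoint unitaries (the paper uses \(W_v\defeq W_{v,1}^*W_{v,0}\) where you normalise \(W_{v,0}=1\) and work with \(W_{v,1}\), which only changes a sign and is immaterial for vanishing in \(\Eext^1\)). The one slip is the claim that \emph{both} endpoint families can be made to agree literally with \(((\Hilm_v),(U_v))\) and \(((\Hilm_v),(\Upsilon_v U_v))\) --- that would force \([\Upsilon_v]=0\) in each \(\K_1(\Comp(\Hilm_v))\), which is stronger than the statement --- but you do not actually use this, since you retain the \(W_{v,j}\) throughout.
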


\begin{proof}
  By the remarks after Lemma~\ref{lem:difference_unitaries}, \(\Hilm\)
  and~\(\Hilm'\)
  are homotopic if and only if we may find unitaries
  \(W_{v,0},W_{v,1}\in\U(\Hilm_v)\) for all \(v\in V_\reg\) so that
  \begin{align*}
    \Upsilon_{v,0}
    &\defeq W_{v,0} \circ U_v \circ
      \biggl( \bigoplus_{e\in E^v} W_{\s(e),0}^*\biggr)\circ U_v^*,
    \\
    \Upsilon_{v,1}
    &\defeq W_{v,1} \circ \Upsilon_v U_v \circ
      \biggl( \bigoplus_{e\in E^v} W_{\s(e),1}^*\biggr) \circ U_v^*
  \end{align*}
  are homotopic in \(\U(\Hilm_v)\).
  Equivalently, \(\Upsilon_v\) is homotopic to
  \begin{equation}
    \label{cocycle_unitaries}
    W_v\cdot U_v \biggl( \bigoplus_{e\in E^v} W_{\s(e)}^*\biggr) U_v^*,
  \end{equation}
  where \(W_v \defeq W_{v,1}^* W_{v,0}\).  This is true by
  assumption.  In the same way that the unitaries
  \(\Upsilon_v\in \U(\Hilm_v)\) for \(v\in V_\reg\) give a map
  \([\Upsilon]\colon \Z[V_\reg,\Idealsc] \to \psi^*
  \K_1(B,\Idealsc)\), the unitaries~\(W_v\) give a map
  \([W]\colon \Z[V,\Idealsc] \to \psi^* \K_1(B,\Idealsc)\).  The
  class of the unitary in~\eqref{cocycle_unitaries} in
  \(\K_1(\Comp(\Hilm_v)) \cong \K_1(\psi(W_v))\) is
  \([W_v] - \sum_{e\in E^v} {}[W_{\s(e)}]\).  Therefore,
  \([W]\circ (\id-M_G^{\Idealsc}) = [\Upsilon]\) as diagram
  morphisms \(\Z[V_\reg,\Idealsc] \to \psi^*\K_1(B,\Idealsc)\).
  This witnesses that~\([\Upsilon]\) represents zero in
  \(\Eext^1\bigl(G,\psi^*\K_1(B,\Idealsc)\bigr)\).
\end{proof}

\begin{theorem}
  \label{the:Ext1_to_homotopy}
  Describe a \(\Cst(G),B\)-correspondences~\(\Hilm\) by a family
  \(\bigl((\Hilm_v),(U_v)\bigr)\) as in
  Theorem~\textup{\ref{the:lift_prop_CB}}.  Let
  \[
    \varphi_0\colon \prop(\Cst(G)) \to \prop(B),\qquad
    \varphi_1\colon \K_1(\Cst(G),\Idealsc) \to
    \psi^*\K_1(B,\Idealsc)
  \]
  be the maps induced by~\(\Hilm\).  Assume that the unital
  \(\Cst\)\nb-algebras \(\Comp(\Hilm_v)\) for \(v\in V_\reg\) are
  \(\K_1\)\nb-bijective.  Then the construction in
  Proposition~\textup{\ref{pro:Ext1_to_homotopy}} gives a bijection
  between the group
  \(\Eext^1\bigl(G,\psi^*\K_1(B,\Idealsc)\bigr)\)
  and the set of homotopy classes of
  \(\Cst(G),B\)-correspondences that induce the same maps
  \(\varphi_0\) and~\(\varphi_1\).
\end{theorem}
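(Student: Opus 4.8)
The plan is to fix the given correspondence~\(\Hilm\) as a base point and prove that the assignment \(\Hilm'\mapsto[\Upsilon]\) from Proposition~\ref{pro:Ext1_to_homotopy} descends to a bijection from homotopy classes of \(\Cst(G),B\)-correspondences inducing \((\varphi_0,\varphi_1)\) onto \(\Eext^1\bigl(G,\psi^*\K_1(B,\Idealsc)\bigr)\). Throughout I use the description of this group as the middle cohomology of the complex~\eqref{eq:compute_Ext_Idealsc}: a cocycle is a diagram morphism \([\Upsilon]\colon\Z[V_\reg,\Idealsc]\to\psi^*\K_1(B,\Idealsc)\) with \([\Upsilon]\circ\kappa=0\), a coboundary is one of the form \([W]\circ(\id-M_G^{\Idealsc})\), and by Proposition~\ref{pro:ZV_projective} such morphisms are encoded by families of classes \([\Upsilon_v]\in\K_1(\Comp(\Hilm_v))\) for \(v\in V_\reg\) in the first case and \(w_u\in\K_1(\Comp(\Hilm_u))\) for \(u\in V\) in the second, using the Morita identifications \(\K_1(\psi(\langle u\rangle))\cong\K_1(\Comp(\Hilm_u))\).

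First I would check that the map is well defined. Given any \(\Cst(G),B\)-correspondence~\(\Hilm'\) inducing the same \(\varphi_0\) and~\(\varphi_1\), the Hilbert modules agree since \([\Hilm'_v]=\varphi_0[p_v]=[\Hilm_v]\), so after replacing~\(\Hilm'\) by an isomorphic correspondence Lemma~\ref{lem:difference_unitaries} gives \(\Upsilon_v\in\U(\Comp(\Hilm_v))\) with \(U'_v=\Upsilon_v U_v\); these assemble into a diagram morphism \([\Upsilon]\) as in~\eqref{eq:difference_class_as_diagram_morphism}, and \eqref{eq:difference_class_boundary} gives \([\Upsilon]\circ\kappa=\Hilm_*-\Hilm'_*=0\), so \([\Upsilon]\) is a cocycle. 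To see that its class in \(\Eext^1\) depends only on the homotopy class of~\(\Hilm'\), apply Proposition~\ref{pro:Ext1_to_homotopy} to two homotopic such correspondences \(\Hilm'\) and~\(\Hilm''\): comparing both to the base point~\(\Hilm\) yields cocycles \([\Upsilon']\) and~\([\Upsilon'']\), and \(\Hilm''\) relative to~\(\Hilm'\) is described by the unitaries \(\Upsilon''_v(\Upsilon'_v)^*\), whose cocycle is \([\Upsilon'']-[\Upsilon']\); Proposition~\ref{pro:Ext1_to_homotopy} says this represents~\(0\), so \([\Upsilon']\) and~\([\Upsilon'']\) have the same class. Surjectivity is then easy given \(\K_1\)\nb-surjectivity: represent a class in \(\Eext^1\) by a cocycle \([\Upsilon]\), lift each \([\Upsilon_v]\) to a unitary \(\Upsilon_v\in\U(\Comp(\Hilm_v))\), and set \(U'_v\defeq\Upsilon_v U_v\); the resulting family describes a correspondence~\(\Hilm'\) which induces~\(\varphi_0\) (unchanged Hilbert modules) and~\(\varphi_1\) (since \(\Hilm'_*=\Hilm_*+[\Upsilon]\circ\kappa=\Hilm_*=\varphi_1\) by \eqref{eq:difference_class_boundary} and the cocycle condition), and which maps to~\([\Upsilon]\).

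The substantive part is injectivity, which is the converse of Proposition~\ref{pro:Ext1_to_homotopy}. Suppose \(\Hilm'\) and~\(\Hilm''\) both induce \((\varphi_0,\varphi_1)\) and have the same class in \(\Eext^1\). After isomorphisms we may take \(\Hilm'_v=\Hilm''_v=\Hilm_v\) and write \(U'_v=\Upsilon'_v U_v\), \(U''_v=\Upsilon''_v U_v\); then \(\Hilm''\) is described relative to~\(\Hilm'\) by the unitaries \(\Xi_v\defeq\Upsilon''_v(\Upsilon'_v)^*\), whose diagram morphism \([\Xi]\) equals \([\Upsilon'']-[\Upsilon']\) and hence represents~\(0\) in \(\Eext^1\); that is, \([\Xi]=[W]\circ(\id-M_G^{\Idealsc})\) for some diagram morphism \([W]\) with components \(w_u\in\K_1(\Comp(\Hilm_u))\), \(u\in V\). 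Using \(\K_1\)\nb-surjectivity I would lift each \(w_u\) to a unitary \(W_u\in\U(\Comp(\Hilm_u))\); unwinding the coboundary relation, together with the computation already carried out in the proof of Proposition~\ref{pro:Ext1_to_homotopy}, shows that the class of \(\Xi_v\) in \(\K_1(\Comp(\Hilm_v))\) equals that of \(W_v\circ U_v\circ\bigl(\bigoplus_{e\in E^v}W_{\s(e)}^*\bigr)\circ U_v^*\) for every \(v\in V_\reg\). Since \(\Comp(\Hilm_v)\) is \(\K_1\)\nb-injective, \(\Xi_v\) is norm-homotopic in \(\U(\Comp(\Hilm_v))\) to this unitary; feeding \(W_{v,0}\defeq W_v\), \(W_{v,1}\defeq 1\) and this path into the explicit homotopy criterion recorded after Lemma~\ref{lem:difference_unitaries} produces a homotopy between \(\Hilm'\) and~\(\Hilm''\).

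The main obstacle is exactly this last step: converting the purely algebraic statement that the comparison cocycle is a coboundary into an actual homotopy of correspondences. This is where the full force of \(\K_1\)\nb-bijectivity enters — \(\K_1\)\nb-surjectivity to realize the coboundary-witnessing cochain \((w_u)\) by genuine unitaries, and \(\K_1\)\nb-injectivity to upgrade the resulting equalities of \(\K_1\)-classes to norm-homotopies of unitaries that the criterion after Lemma~\ref{lem:difference_unitaries} can assemble. Everything else is bookkeeping: identifying the cochain complex~\eqref{eq:compute_Ext_Idealsc} computing \(\Eext^1\) with the families of unitaries \((\Upsilon_v)\) and \((W_v)\) appearing in the homotopy criterion, via Proposition~\ref{pro:ZV_projective} and the Morita identifications.
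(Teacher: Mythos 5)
Your proposal is correct and follows essentially the same route as the paper's proof: well-definedness via Proposition~\ref{pro:Ext1_to_homotopy}, surjectivity by lifting a cocycle to unitaries \(\Upsilon_v\) using \(\K_1\)\nb-surjectivity, and injectivity by realising the coboundary witness by unitaries \(W_v\) and upgrading the resulting equality of \(\K_1\)\nb-classes to a norm-homotopy via \(\K_1\)\nb-injectivity, assembled through the homotopy criterion following Lemma~\ref{lem:difference_unitaries}. The only cosmetic difference is that you phrase the comparison of \(\Hilm'\) and \(\Hilm''\) through the quotient unitaries \(\Upsilon''_v(\Upsilon'_v)^*\) and the explicit data \(W_{v,0}=W_v\), \(W_{v,1}=1\), whereas the paper conjugates \(\Upsilon''_v\) directly into the form \(W_v\Upsilon'_v U_v\bigl(\bigoplus_{e\in E^v}W_{\s(e)}^*\bigr)U_v^*\); both require the same extra application of \(\K_1\)\nb-injectivity to absorb a commutator.
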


\begin{proof}
  Let \(\Hilm'\) and~\(\Hilm''\) be
  \(\Cst(G),B\)-correspondences that induce the same maps
  \(\varphi_0\) and~\(\varphi_1\) as~\(\Hilm\).  Describe \(\Hilm'\)
  and~\(\Hilm''\) by families \(\bigl((\Hilm_v'),(U'_v)\bigr)\) and
  \(\bigl((\Hilm_v''),(U''_v)\bigr)\) and replace them by isomorphic
  correspondences with \(\Hilm''_v = \Hilm'_v=\Hilm_v\) for all
  \(v\in V\).  Then there are unitaries \(\Upsilon_v'\)
  and~\(\Upsilon_v''\) for \(v\in V_\reg\) with
  \(U_v' = \Upsilon'_v U_v\) and \(U_v'' = \Upsilon''_v U_v\).
  Assume that \([\Upsilon'] = [\Upsilon'']\) holds in
  \(\Eext^1\bigl(G,\psi^*\K_1(B,\Idealsc)\bigr)\).  Then there is a
  map \(\beta\colon \Z[V,\Idealsc] \to \psi^* \K_1(B,\Idealsc)\)
  with
  \(\beta\circ (\id-M_G^{\Idealsc}) = [\Upsilon''] - [\Upsilon'] =
  [(\Upsilon')^*\Upsilon'']\).  The proof of
  Proposition~\ref{pro:lift_to_K0_K1} shows that~\(\beta\) is
  specified by elements
  \(\beta_v \in \K_1(\psi(\langle v\rangle)) \cong
  \K_1(\Comp(\Hilm_v))\) for \(v\in V\).  Since~\(\Comp(\Hilm_v)\)
  is \(\K_1\)\nb-surjective, these \(\K_1\)\nb-classes are already
  represented by unitaries \(W_v \in \U(\Hilm_v)\).  The equation
  \(\beta\circ (\id-M_G^{\Idealsc}) = [(\Upsilon')^*\Upsilon'']\)
  says that the unitaries in~\eqref{cocycle_unitaries}
  and~\((\Upsilon_v')^*\Upsilon_v''\) represent the same class in
  \(\K_1(\psi(\langle v\rangle)) \cong \K_1(\Comp(\Hilm_v))\).
  Since~\(\Comp(\Hilm_v)\) is \(\K_1\)\nb-injective,
  \((\Upsilon_v')^*\Upsilon_v''\) and
  \(W_v\cdot U_v \biggl( \bigoplus_{e\in E^v} W_{\s(e)}^*\biggr)
  U_v^*\) are homotopic in~\(\Comp(\Hilm_v)\).  Similarly,
  \(\Upsilon'_v W_v\) is homotopic to~\(W_v \Upsilon'_v\).  So
  \(\Upsilon_v''\) is homotopic in \(\Comp(\Hilm_v)\) to
  \(W_v\cdot \Upsilon'_v U_v \biggl( \bigoplus_{e\in E^v}
  W_{\s(e)}^*\biggr) U_v^*\).  The latter family together
  with~\(\Hilm_v\) for \(v\in V\) describes a
  \(\Cst(G),B\)-correspondence that is isomorphic to~\(\Hilm'\), and
  the homotopy of unitaries gives a homotopy between the latter
  and~\(\Hilm''\).  So \(\Hilm'\) and~\(\Hilm''\) are homotopic.

  Next, take any class in
  \(\Eext^1\bigl(G,\psi^*\K_1(B,\Idealsc)\bigr)\).  This is
  represented by some diagram morphism
  \(\upsilon\colon \Z[V_\reg,\Idealsc] \to \psi^*\K_1(B,\Idealsc)\)
  with \(\upsilon \circ \kappa = 0\).  For \(v\in V_\reg\), the
  value of this map on \(\delta_v \in \Z[\langle v\rangle]\) is a
  class in
  \(\K_1(\psi(\langle v\rangle)) \cong \K_1(\Comp(\Hilm_v))\).  The
  latter is represented by a unitary
  \(\Upsilon_v\in \Comp(\Hilm_v)\) because \(\Comp(\Hilm_v)\) is
  \(\K_1\)\nb-bijective.  The family
  \(\bigl((\Hilm_v), (\Upsilon_v U_v)\bigr)\) describes a
  \(\Cst(G),B\)-correspondence~\(\Hilm'\) that induces the same
  map~\(\varphi_0\).  Since \(\upsilon \circ \kappa = 0\), it also
  induces the same diagram morphism
  \(\K_1(\Cst(G),\Idealsc) \to \psi^*\K_1(B,\Idealsc)\).  By
  construction, \(\Hilm'\) reproduces the class in
  \(\Eext^1\bigl(G,\psi^*\K_1(B,\Idealsc)\bigr)\) that we started
  with.
\end{proof}

Now let \(G\)
and~\(G'\)
be two graphs.
We denote the sets of vertices and edges of \(G,G'\)
by \(V,V'\)
and \(E,E'\),
respectively.  Let \(A\defeq \Cst(G)\) and
\(B\defeq \Cst(G')\).  Assume that there are an order isomorphism
\(\psi\colon \Idealsc(G) \cong \Idealsc(A) \congto \Idealsc(B) \cong
\Idealsc(G')\) and diagram isomorphisms
\begin{align*}
  \varphi_0^+\colon \K_0^+(A,\Idealsc) &\congto \psi^*\K_0^+(B,\Idealsc),\\
  \varphi_1\colon \K_1(A,\Idealsc) &\congto \psi^*\K_1(B,\Idealsc).
\end{align*}
Since graph \Cstar{}algebras
have stable weak cancellation by
Theorem~\ref{the:graph_K1-bijective} and monoid isomorphisms
preserve order units, \(\psi\)
and~\(\varphi_0^+\) come from a monoid isomorphism
\[
  \varphi_0\colon \prop(A) \congto \prop(B)
\]
by Theorem~\ref{the:prop_vs_K0plus}.  When is there a stable homotopy
equivalence between \(A\)
and~\(B\)
that induces the pair of maps \(\varphi_0\)
and~\(\varphi_1\)?
In order to lift this pair of maps to proper correspondences, we
need the obstruction classes of \((\varphi_0,\varphi_1)\)
and \((\varphi_0^{-1},\varphi_1^{-1})\)
to vanish.  We are going to prove that this condition is also
sufficient for a homotopy equivalence.  First, we prove that one of
these two obstruction classes vanishes if and only if the other does
so.

\begin{lemma}
  \label{lem:obstruction_inverse}
  The obstruction class of \((\varphi_0,\varphi_1)\)
  vanishes if and only if the obstruction class of
  \((\varphi_0^{-1},\varphi_1^{-1})\) vanishes.
\end{lemma}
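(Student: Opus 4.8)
The plan is to reduce, by symmetry, to the implication ``$\Delta(\varphi_0,\varphi_1)=0 \implies \Delta(\varphi_0^{-1},\varphi_1^{-1})=0$'', using that $(\varphi_0^{-1})^{-1}=\varphi_0$ and $(\varphi_1^{-1})^{-1}=\varphi_1$. Assume $\Delta(\varphi_0,\varphi_1)=0$. First I would promote this to an \emph{exact} lifting: for $v\in V$ choose a proper Hilbert $B$\nb-module $\Hilm_v$ with $[\Hilm_v]=\varphi_0[p_v]$; since $\Comp(\Hilm_v)\cong p(B\otimes\Comp)p$ for some projection~$p$, it is $\K_1$\nb-surjective by Theorem~\ref{the:graph_K1-bijective} (as $B=\Cst(G')$ is a graph \Cstar{}algebra), so Proposition~\ref{pro:lift_to_K0_K1} gives a proper $\Cst(G),B$-correspondence $\Hilm$ inducing \emph{exactly} $\varphi_0$ on $\prop(A)$ and $\varphi_1$ on $\K_1(A,\Idealsc)$. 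By Theorem~\ref{the:lift_prop_CB} I would then pick \emph{any} proper $\Cst(G'),A$-correspondence $\Hilm'$ with $\Hilm'_*=\varphi_0^{-1}$ on $\prop(B)$; as in the proof of Theorem~\ref{the:prop_vs_K0plus} it induces $\psi^{-1}$ on ideals and, being $\Idealsc(G')$\nb-covariant, a diagram morphism $\chi\defeq\Hilm'_*\colon\K_1(B,\Idealsc)\to(\psi^{-1})^*\K_1(A,\Idealsc)$. Identifying $\Idealsc(G)\cong\Idealsc(A)\cong\Idealsc(B)\cong\Idealsc(G')$ along~$\psi$ throughout, I treat $\varphi_1$, its inverse $\varphi_1^{-1}$ and~$\chi$ as diagram morphisms over one poset.

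The key step is to compose: $\Hilm'\otimes_A\Hilm$ is a proper $\Cst(G'),\Cst(G')$-correspondence (composition preserves properness, and is again automatically $\Idealsc(G')$\nb-covariant), and its induced maps are the composites of those of $\Hilm$ and $\Hilm'$, so it induces $\varphi_0\circ\varphi_0^{-1}=\id$ on $\prop(B)$ and $\varphi_1\circ\chi$ on $\K_1(B,\Idealsc)$. The identity bimodule $B$ is a proper $\Cst(G'),\Cst(G')$-correspondence (here $\Comp(B)=B$) inducing $\id$ on $\prop(B)$ and on $\K_1(B,\Idealsc)$, so $\Delta(\id_{\prop(B)},\id_{\K_1(B,\Idealsc)})=0$ by Proposition~\ref{pro:obstruction_class_well-defined}. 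Computing this same obstruction class instead from the lifting correspondence $\Hilm'\otimes_A\Hilm$, Definition~\ref{def:obstruction_class} identifies it with the image of $\id_{\K_1(B,\Idealsc)}-\varphi_1\circ\chi$ in $\Eext^2\bigl(G',(\psi^{-1})^*\K_1(A,\Idealsc)\bigr)$; hence this image vanishes. By the cokernel-of-$\kappa^*$ description of $\Eext^2$ (as in~\eqref{eq:compute_Ext_Idealsc}, with $G'$ in place of $G$), this means $\id_{\K_1(B,\Idealsc)}-\varphi_1\circ\chi=\beta\circ\kappa_{G'}$ for some diagram morphism $\beta\colon\Z[V'_\reg,\Idealsc]\to\K_1(B,\Idealsc)$. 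Left-composing with $\varphi_1^{-1}$ gives $\varphi_1^{-1}-\chi=(\varphi_1^{-1}\circ\beta)\circ\kappa_{G'}$, so $\varphi_1^{-1}-\chi$ factors through $\kappa_{G'}$. Since $\Hilm'$ lifts $\varphi_0^{-1}$ with $\Hilm'_*=\chi$, Definition~\ref{def:obstruction_class} makes $\Delta(\varphi_0^{-1},\varphi_1^{-1})$ equal to the image of $\varphi_1^{-1}-\chi$ in $\Eext^2\bigl(G',(\psi^{-1})^*\K_1(A,\Idealsc)\bigr)$, which is therefore $0$.

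I expect the real work to be bookkeeping rather than conceptual. The two nontrivial ingredients that need a line each are: functoriality of the induced maps on $\prop$ and on the $\K_1$\nb-diagram under the tensor-product composition of proper correspondences (so that $\Hilm'\otimes_A\Hilm$ really induces $\varphi_1\circ\chi$), and the fact that this composite is again $\Idealsc(G')$\nb-covariant so that it acts on the $\K_1$\nb-diagram at all. The main delicacy is keeping the pullbacks along $\psi$ and $\psi^{-1}$ consistent, so that $\varphi_1$, $\varphi_1^{-1}$, $\chi$, $\kappa_G$, and $\kappa_{G'}$ genuinely live over a common index set and the identity $\id-\varphi_1\circ\chi=\beta\circ\kappa_{G'}$ is meaningful; once the indexing is fixed, everything is a direct appeal to Theorem~\ref{the:graph_K1-bijective}, Proposition~\ref{pro:lift_to_K0_K1}, Theorem~\ref{the:lift_prop_CB}, and Proposition~\ref{pro:obstruction_class_well-defined}.
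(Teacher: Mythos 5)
Your proof is correct and follows essentially the same route as the paper's: reduce by symmetry, lift $(\varphi_0,\varphi_1)$ exactly using $\K_1$\nb-surjectivity, lift $\varphi_0^{-1}$ without control on $\K_1$, and compare the obstruction class of $(\id_{\prop(B)},\id_{\K_1(B,\Idealsc)})$ computed from the composite correspondence with the one computed from the identity correspondence, then post-compose with $\varphi_1^{-1}$. The only differences are notational (your $\chi$ is the paper's $\Hilm_{BA,*}$) and that you spell out the cokernel-of-$\kappa^*$ description explicitly.
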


\begin{proof}
  By symmetry, it suffices to prove one implication.  So assume that
  the obstruction class of \((\varphi_0,\varphi_1)\) vanishes.  Then
  Proposition~\ref{pro:lift_to_K0_K1} gives an
  \(A,B\)\nb-\alb{}correspondence~\(\Hilm_{A B}\) that lifts
  \((\varphi_0,\varphi_1)\) because graph \Cstar{}algebras are
  \(\K_1\)\nb-surjective by Theorem~\ref{the:graph_K1-bijective}.
  Theorem~\ref{the:lift_prop_CB} gives a
  \(B,A\)\nb-\alb{}correspondence~\(\Hilm_{B A}\) that
  lifts~\(\varphi_0^{-1}\), but we do not yet control the map
  on~\(\K_1\) induced by~\(\Hilm_{B A}\).  The composite
  \(B,B\)-correspondence \(\Hilm_{A B}\otimes_A \Hilm_{B A}\)
  induces the map \(\varphi_0\circ \varphi_0^{-1}=\id_{\prop(B)}\)
  on~\(\prop(B)\).  So we may use it to compute the obstruction
  class for the maps \(\id_{\prop(B)}\) and
  \(\id_{\K_1(B,\Idealsc)}\).  We may also use the identity
  correspondence on~\(B\) to compute this obstruction class.  Since
  the obstruction class is well defined by
  Proposition~\ref{pro:obstruction_class_well-defined}, both
  computations give the same result.  The first one gives
  \(\id_{\K_1(B,\Idealsc)} - \varphi_1 \circ \Hilm_{B A,*}\) because
  \(\Hilm_{A B,*} = \varphi_1\) on~\(\K_1\).  The second one gives
  just~\(0\).  Thus
  \(\id_{\K_1(B,\Idealsc)} - \varphi_1 \circ \Hilm_{B A,*}\)
  represents~\(0\) in \(\Eext^2\bigl(G',\K_1(B,\Idealsc)\bigr)\).
  This remains so when we compose with the
  isomorphism~\(\varphi_1^{-1}\).  That is,
  \(\varphi_1^{-1} - \Hilm_{B A,*} \colon \K_1(B,\Idealsc) \to
  (\psi^{-1})^*\K_1(A,\Idealsc)\) represents~\(0\) in
  \(\Eext^2\bigl(G',(\psi^{-1})^*\K_1(A,\Idealsc)\bigr)\).  At the
  same time, \(\varphi_1^{-1} - \Hilm_{B A,*}\) is exactly the
  obstruction class of~\((\varphi_0^{-1},\varphi_1^{-1})\).
\end{proof}

\begin{lemma}
  \label{lem:graph_back_forth_homotopy}
  A self-correspondence~$\Hilm_A$ of a graph $\Cst$\nb-algebra~$A$
  that induces the identity maps on \(\prop(A)\) and
  $\K_1(A,\Idealsc)$ is a stable homotopy equivalence.  In addition,
  all maps in the homotopies involved here induce the identity map
  on the lattices of gauge-invariant ideals.
\end{lemma}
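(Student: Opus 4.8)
The plan is to realise the set of homotopy classes of self-correspondences of \(A\defeq\Cst(G)\) that induce the identity maps as an Abelian group, in such a way that composition of correspondences becomes the group law; then the correspondence representing the inverse of the class of~\(\Hilm_A\) is a homotopy inverse of~\(\Hilm_A\).

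First I would note that \(\Hilm_A\) is full: inducing the identity on \(\prop(A)\) forces the induced map on \(\Idealsc(A)\), hence on \(\Idealsc^\infty(A)\) by Remark~\ref{rem:Idealsc_infty}, to be the identity, so \(\langle\Hilm_A,\Hilm_A\rangle\) is the top ideal~\(A\).  Next I would describe \(\Hilm_A\) and all other relevant correspondences by families \(\bigl((\Hilm_v),(U_v)\bigr)\) as in Theorem~\ref{the:lift_prop_CB} with common Hilbert modules \(\Hilm_v\) satisfying \([\Hilm_v]=[p_v]\), so that \(\Comp(\Hilm_v)\cong p_v(A\otimes\Comp)p_v\) is \(\K_1\)\nb-bijective by Theorem~\ref{the:graph_K1-bijective}.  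Writing \(\hat u_v\) for the unitaries describing the identity correspondence on~\(A\), Lemma~\ref{lem:difference_unitaries} lets me present every self-correspondence of~\(A\) inducing the identity maps (up to isomorphism) as \(\bigl((\Hilm_v),(\Upsilon_v\hat u_v)\bigr)\) with \(\Upsilon_v\in\U(\Comp(\Hilm_v))\) for \(v\in V_\reg\), and Theorem~\ref{the:Ext1_to_homotopy}, applied with the identity correspondence as reference, then identifies the homotopy classes of such correspondences with \(\Eext^1\bigl(G,\K_1(A,\Idealsc)\bigr)\): the class of \(\bigl((\Hilm_v),(\Upsilon_v\hat u_v)\bigr)\) is represented by the diagram morphism \(\delta_v\mapsto[\Upsilon_v]\), and the identity correspondence corresponds to~\(0\).

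The central step is to show that this identification turns the composition \(\Hilm\otimes_A\Hilm'\) of two such correspondences into the sum of their classes.  Here I would unwind the tensor product: because the left \(A\)\nb-action on \(\Hilm\otimes_A\Hilm'\) comes from the first factor, one gets \((\Hilm\otimes_A\Hilm')_v\cong\Hilm_v\otimes_A\Hilm'\cong\Hilm'_v\) and, after these identifications, that \(\Hilm\otimes_A\Hilm'\) is described by \(\bigl((\Hilm_v),(\varphi'(\Upsilon_v)\,\Upsilon'_v\,\hat u_v)\bigr)\), where \(\varphi'\colon A\to\Comp(\Hilm')\) is the left action of~\(\Hilm'\).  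The remaining point is that \([\varphi'(\Upsilon_v)]=[\Upsilon_v]\) in \(\K_1(\Comp(\Hilm_v))\cong\K_1(\Cst(G)_{\langle v\rangle})\): by \(\Idealsc(G)\)\nb-covariance, \(\Hilm'\) restricts to a proper full self-correspondence of the ideal \(\Cst(G)_{\langle v\rangle}\) inducing the identity on \(\K_1(\Cst(G)_{\langle v\rangle})\), and through the Morita--Rieffel equivalence implemented by \(\Hilm_v\) the assignment \(\Upsilon\mapsto\varphi'(\Upsilon)\) on \(\U(\Comp(\Hilm_v))\) induces on \(\K_1\) precisely the map that \(\Hilm'\) induces on \(\K_1(\Cst(G)_{\langle v\rangle})\), namely the identity.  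Granting this, the class of \(\Hilm\otimes_A\Hilm'\) is \(\delta_v\mapsto[\Upsilon_v]+[\Upsilon'_v]\), the sum of the classes of \(\Hilm\) and~\(\Hilm'\).  I expect this comparison of the tensor product with the (trivial) \(\K_1\)\nb-transport coming from~\(\Hilm'\) to be the part that requires the most care.

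To finish, let \(\Upsilon_v\in\U(\Comp(\Hilm_v))\) describe \(\Hilm_A\) relative to the identity correspondence.  Because \(\Hilm_A\) induces the identity on \(\K_1(A,\Idealsc)\), \eqref{eq:difference_class_boundary} shows \(\delta_v\mapsto[\Upsilon_v]\) is a cocycle, hence so is \(\delta_v\mapsto[\Upsilon_v^*]\); the self-correspondence \(\Hilm'_A\) described by \(\bigl((\Hilm_v),(\Upsilon_v^*\hat u_v)\bigr)\) therefore also induces the identity maps, with \(\Eext^1\)\nb-class \(-[\Hilm_A]\).  By the previous step, \(\Hilm_A\otimes_A\Hilm'_A\) and \(\Hilm'_A\otimes_A\Hilm_A\) both have class~\(0\) and hence are homotopic to the identity correspondence on~\(A\); passing to \Star{}homomorphisms into the stabilisation as in Section~\ref{sec:corr_vs_homomorphism} exhibits \(\Hilm_A\) as a stable homotopy equivalence.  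Finally, the map \(\Idealsc^\infty(A)\to\Idealsc^\infty(A)\) induced by a correspondence is constant along homotopies (the first lemma of this section), and each of the two homotopies above ends at the identity correspondence, so every correspondence occurring in them induces the identity on \(\Idealsc^\infty(A)\); by Theorem~\ref{the:graph_Idealsc} this is the lattice of gauge-invariant ideals of~\(A\), which gives the additional assertion.
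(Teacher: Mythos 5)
Your proof is correct and rests on the same two ingredients as the paper's --- the candidate inverse obtained by replacing the unitaries \(\Upsilon_v\) by \(\Upsilon_v^*\), and \(\K_1\)\nb-injectivity of the corners \(\Comp(\Hilm_v)\) from Theorem~\ref{the:graph_K1-bijective} --- but it packages them differently. The paper exploits that one may take \(\Hilm_v = p_v A\) on the nose, so that \(\Hilm_A\) is the correspondence of an actual endomorphism \(\sigma\) of~\(A\) with \(\sigma(p_v)=p_v\) and \(\sigma(t_e)=\Upsilon_{r(e)}t_e\); the inverse is the analogous endomorphism built from \(\Upsilon_{r(e)}^*\), the composite is computed directly on the generators \(p_v,t_e\), and the crucial identity \([\sigma(\Upsilon_v)]=[\Upsilon_v]\) in \(\K_1(p_vAp_v)\) is immediate from functoriality of \(\K_1\) under the \Star{}homomorphism~\(\sigma\) together with the hypothesis on \(\K_1(A,\Idealsc)\). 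You instead upgrade Theorem~\ref{the:Ext1_to_homotopy} (with the identity correspondence as reference) to a group isomorphism by showing that composition of correspondences corresponds to addition in \(\Eext^1\bigl(G,\K_1(A,\Idealsc)\bigr)\). That buys a cleaner structural statement --- it classifies the whole monoid of self-correspondences inducing the identity on the invariants up to homotopy --- but it concentrates the entire burden on the step you rightly flag: the identity \([\varphi'(\Upsilon_v)]=[\Upsilon_v]\), which amounts to checking that transporting a unitary through \(p_vA\otimes_A\Hilm'\cong\Hilm'_v\) induces on \(\K_1\) precisely the map that \(\Hilm'\) induces on \(\K_1(\Cst(G)_{\langle v\rangle})\) under the two Morita--Rieffel identifications. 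This compatibility is true and standard, but it is exactly the computation the paper sidesteps by reducing to \Star{}homomorphisms; once you spell it out, your argument is complete. Your treatment of the final assertion on gauge-invariant ideals agrees with the paper's.
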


\begin{proof}
  Since~$\Hilm_A$ induces the identity map on $\prop(A)$, we may
  assume without loss of generality that the underlying Hilbert
  modules~$\Hilm_v$ are~$p_v A$ for all vertices~$v$.
  Then~$\Hilm_A$ is described through unitaries
  $\Upsilon_v\in p_v A p_v = \Comp(\Hilm_v)$ for \(v\in V_\reg\).
  Namely, its representation in Theorem~\ref{the:lift_prop_CB}
  involves the family of Hilbert modules $(p_v A)_{v\in V}$ and
  unitaries $(\Upsilon_v u_v)_{v\in V_\reg}$ with the canonical
  unitaries~$u_v$.  Since $\Hilm_v = \Hilm_v' = p_v A$, the
  underlying right Hilbert module of~$\Hilm_A$ is isomorphic to
  $\bigoplus_{v} p_v A = A$.  Thus~$\Hilm_A$ is isomorphic to the
  correspondence associated to the \Star{}homomorphism
  $\sigma\colon A\to A$ defined by $\sigma(p_v) = p_v$ for $v\in V$
  and $\sigma(t_e) = \Upsilon_{r(e)} t_e$ for $e\in E$.

  Now let $\sigma'\colon A\to A$ be defined by the conditions
  $\sigma'(p_v) = p_v$ for all $v\in V$ and
  $\sigma'(t_e) = \Upsilon_{r(e)}^* t_e$ for all $e\in E$.  Then
  $\sigma'\circ \sigma(p_v) = p_v$ and
  $\sigma'\circ \sigma(t_e) = \Upsilon_{r(e)}^*
  \sigma(\Upsilon_{r(e)})t_e$.  Since~$\sigma$ induces the identity
  map on $\K_1(A,\Idealsc)$, the unitaries
  $\Upsilon_{r(e)}^* \sigma(\Upsilon_{r(e)}) \in p_v A p_v$
  represent zero in $\K_1(p_v A p_v)$.  Since $p_v \Cst(A) p_v$ is
  \(\K_1\)\nb-injective for any vertex~$v$ of the graph by
  Theorem~\ref{the:graph_K1-bijective}, the unitary
  $\Upsilon_{r(e)}^* \sigma(\Upsilon_{r(e)})$ in $p_v A p_v$ is
  homotopic to the unit element~$p_v$.  This gives us a homotopy
  between $\sigma'\circ \sigma$ and the identity map (compare
  Theorem~\ref{the:Ext1_to_homotopy}).  Since $\sigma$ and
  $\sigma'\circ\sigma$ induce the identity map on the invariants, it
  follows that~$\sigma'$ does so as well.  Then we may switch the
  roles of $\sigma$ and~$\sigma'$ to show that $\sigma\circ \sigma'$
  is homotopic to the identity map as well.  Then~$\sigma$ is a
  homotopy equivalence.  Since all gauge-invariant ideals are
  generated by the projections that they contain, and since all maps
  in the homotopies induce the same maps on \(\prop(A)\), it follows
  that all maps in the homotopies induce the identity map on the
  lattices of gauge-invariant ideals.
\end{proof}

\begin{theorem}
  \label{the:graph_homotopy_equivalence}
  Let $G$ and~$G'$ be countable graphs.  Define
  \(A\defeq \Cst(G)\), \(B\defeq \Cst(G')\).  Let
  \(\psi\colon \Idealsc(G) \cong \Idealsc(A) \congto \Idealsc(B)
  \cong \Idealsc(G')\) be an order isomorphism and let
  \begin{align*}
    \varphi_0^+\colon \K_0^+(A,\Idealsc) &\congto \psi^*\K_0^+(B,\Idealsc),\\
    \varphi_1\colon \K_1(A,\Idealsc) &\congto \psi^*\K_1(B,\Idealsc)
  \end{align*}
  be isomorphism of diagrams.  These isomorphisms induce a monoid
  isomorphism \(\varphi_0\colon \prop(A) \congto \prop(B)\) as
  above.  Assume that the obstruction class of
  $(\varphi_0,\varphi_1)$ vanishes.  Then there is a stable homotopy
  equivalence between $\Cst(G)$ and $\Cst(G')$ that induces the maps
  $\varphi_0^+$ and~$\varphi_1$.  The homotopies in this equivalence
  automatically preserve gauge-invariant ideals, that is, the
  induced maps on \(\Ideals^\T(\Cst(G))\) and
  \(\Ideals^\T(\Cst(G'))\) are constant along the homotopies.
\end{theorem}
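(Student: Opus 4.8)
The plan is to lift the given data to proper correspondences in both directions, to recognise the two resulting composite self-correspondences as homotopy equivalences by means of Lemma~\ref{lem:graph_back_forth_homotopy}, and to conclude by a formal diagram chase that the forward correspondence is itself a homotopy equivalence. Concretely: since the obstruction class of $(\varphi_0,\varphi_1)$ vanishes, Lemma~\ref{lem:obstruction_inverse} shows that the obstruction class of $(\varphi_0^{-1},\varphi_1^{-1})$ vanishes as well. Graph \Cstar{}algebras are $\K_1$\nb-surjective by Theorem~\ref{the:graph_K1-bijective}, so Proposition~\ref{pro:lift_to_K0_K1} applies to both pairs and yields a proper $A,B$\nb-correspondence~$\Hilm_{A B}$ inducing $(\varphi_0,\varphi_1)$ (hence $\varphi_0^+$ and~$\varphi_1$) and a proper $B,A$\nb-correspondence~$\Hilm_{B A}$ inducing $(\varphi_0^{-1},\varphi_1^{-1})$. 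Because $\psi$ is an order isomorphism, composing the induced ideal maps of $\Hilm_{A B}$ and~$\Hilm_{B A}$ gives the identity on $\Idealsc^\infty(\Cst(G))$; and, using that $\prop$ and $\K_1(\,\cdot\,,\Idealsc)$ are functorial under the tensor product of correspondences, the self-correspondences $\mathcal{C}_A \defeq \Hilm_{A B}\otimes_B\Hilm_{B A}$ of~$A$ and $\mathcal{C}_B \defeq \Hilm_{B A}\otimes_A\Hilm_{A B}$ of~$B$ induce the identity maps on $\prop$ and on $\K_1(\,\cdot\,,\Idealsc)$ of~$A$ and of~$B$, respectively.

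Next, Lemma~\ref{lem:graph_back_forth_homotopy} applies to $\mathcal{C}_A$ and~$\mathcal{C}_B$ and shows that both are stable homotopy equivalences, that is, invertible morphisms in the category whose objects are graph \Cstar{}algebras and whose morphisms are homotopy classes of proper correspondences under~$\otimes$. Writing $f\defeq[\Hilm_{A B}]$ and $g\defeq[\Hilm_{B A}]$, this says that $g\circ f$ and $f\circ g$ are both invertible; hence $f$ admits both a left and a right inverse and is therefore invertible. Choosing any proper $B,A$\nb-correspondence~$\Hilm'_{B A}$ representing~$f^{-1}$, we obtain homotopies between $\Hilm_{A B}\otimes_B\Hilm'_{B A}$ and the identity correspondence of~$A$, and between $\Hilm'_{B A}\otimes_A\Hilm_{A B}$ and the identity correspondence of~$B$. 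Together with the fact that $\Hilm_{A B}$ induces $\varphi_0^+$ and~$\varphi_1$, the pair $(\Hilm_{A B},\Hilm'_{B A})$ is the asserted stable homotopy equivalence; note that both correspondences are full because the order isomorphism~$\psi$ carries the top ideal to the top ideal, so they correspond to honest \Star{}homomorphisms between the stabilisations.

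It remains to check the claim about gauge-invariant ideals. As in the proof of Lemma~\ref{lem:graph_back_forth_homotopy}, I would argue that \emph{every} homotopy of proper correspondences whose domain is a graph \Cstar{}algebra has all of its slices inducing one and the same map on $\Idealsc^\infty$: homotopic projections are Murray--von Neumann equivalent, so all slices induce the same homomorphism on~$\prop$, hence (via~$\supp$) the same map on $\Idealsc$, hence by Remark~\ref{rem:Idealsc_infty} the same map on $\Idealsc^\infty$, which equals the lattice of gauge-invariant ideals by Theorem~\ref{the:graph_Idealsc}. Applied to the two homotopies above, whose endpoints include the identity correspondences of~$A$ and~$B$, this forces all of their slices to induce the identity map on $\Ideals^\T(\Cst(G))$ and on $\Ideals^\T(\Cst(G'))$, respectively, which is exactly the asserted ideal-preservation.

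The main obstacle I anticipate is bookkeeping rather than any single hard estimate: one must verify carefully that the composites $\mathcal{C}_A,\mathcal{C}_B$ satisfy the hypotheses of Lemma~\ref{lem:graph_back_forth_homotopy} --~in particular that, as correspondences out of a graph \Cstar{}algebra, they are $\Idealsc(G)$\nb-covariant and induce the identity on $\K_1(\,\cdot\,,\Idealsc)$, which rests on the functoriality of the $\K_1$\nb-map under~$\otimes$ and on the compatibility of the pull-back diagrams along~$\psi$ and~$\psi^{-1}$~-- and one must be satisfied with the mere \emph{existence} of a homotopy inverse~$f^{-1}$, which is why the automatic rigidity of homotopies out of graph \Cstar{}algebras is invoked in the last step rather than only the ideal-preservation statement already built into Lemma~\ref{lem:graph_back_forth_homotopy}.
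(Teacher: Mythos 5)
Your proposal is correct and follows essentially the same route as the paper: vanishing of the inverse obstruction class via Lemma~\ref{lem:obstruction_inverse}, lifting both pairs to correspondences by Proposition~\ref{pro:lift_to_K0_K1} and $\K_1$\nb-surjectivity, recognising the composites as homotopy equivalences via Lemma~\ref{lem:graph_back_forth_homotopy}, and concluding by the left-and-right-invertibility argument. The ideal-preservation step you spell out (homotopic projections are equivalent, so all slices induce the same map on $\prop$ and hence on the gauge-invariant ideal lattice) is exactly the rigidity the paper extracts from Lemma~\ref{lem:graph_back_forth_homotopy}.
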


\begin{proof}
  Since the obstruction class of $(\varphi_0,\varphi_1)$ vanishes,
  the same holds for the inverse maps
  \((\varphi_0^{-1},\varphi_1^{-1})\) by
  Lemma~\ref{lem:obstruction_inverse}.  Since graph \Cstar{}algebras
  are \(\K_1\)\nb-surjective by
  Theorem~\ref{the:graph_K1-bijective},
  Proposition~\ref{pro:lift_to_K0_K1} gives an
  \(A,B\)-correspondence \(\Hilm_{A B}\) and a
  \(B,A\)-correspondence \(\Hilm_{B A}\) that induce the maps
  \((\varphi_0,\varphi_1)\) and \((\varphi_0^{-1},\varphi_1^{-1})\)
  on the invariants.  Let
  \(\Hilm_A \defeq \Hilm_{A B} \otimes_B \Hilm_{B A}\) and
  \(\Hilm_B \defeq \Hilm_{B A} \otimes_A \Hilm_{A B}\) be the
  composite \(A,A\)- and \(B,B\)-correspondences.  Both induce the
  identity maps on \(\prop\) and on the \(\K_1\)\nb-diagram.  Thus
  they are stable homotopy equivalences by
  Lemma~\ref{lem:graph_back_forth_homotopy}.  Thus $\Hilm_{A B}$ and
  \(\Hilm_{B A}\) are both left and right invertible up to homotopy.
  This forces them to be stable homotopy equivalences.  In addition,
  the lemma shows that the homotopies implicit in this all induce
  the identity map on the lattice of gauge-invariant ideals.
\end{proof}

\section{The circle components}
\label{sec:circles}

The homotopy equivalence in
Theorem~\ref{the:graph_homotopy_equivalence} has the extra property
that it preserves the gauge-invariant ideals.  A graph
\Cstar{}algebra may have gauge-invariant subquotients that are
Morita equivalent to \(\Cont(\T)\).  In this case, there are many
ideals that are not gauge-invariant.  In this section, we show that
we may arrange the homotopy equivalence to also preserve these
ideals.  As before, we let~\(G\) be a row-finite graph.  We are
going to use some results of Hong and
Szyma\'nski~\cite{Hong-Szymanski:Primitive_graph}.  Therefore, we
assume~\(G\) to be countable from now on.  This ensures that
ideals in \(\Cst(G)\) are prime if and only if they are primitive,
and this may go wrong in general (see
\cites{Abrams-Tomforde:prime_primitive,
  Abrams-Tomforde:Prime_primitive_graph}).  The following notation
is introduced in~\cite{Hong-Szymanski:Primitive_graph} to describe
the primitive ideal space of~\(\Cst(G)\).  Beware
that~\cite{Hong-Szymanski:Primitive_graph} uses the opposite
convention regarding the source and range maps on the graph.

\begin{definition}
  Write \(w\le v\) for \(w,v \in V\) if there is a path
  in~\(G\) from~\(w\) to~\(v\), and \(w\sim v\) if \(w\le v\) and
  \(v\le w\).
\end{definition}

The relation~\(\le\) is a preorder.  So~\(\sim\) is an equivalence
relation.

\begin{definition}
  A subset \(M\subseteq V\) is called a \emph{maximal tail} if
  \begin{enumerate}[label=\textup{(MT\arabic*)}]
  \item \label{en:maxtail1}%
    if \(e\in E\) and \(s(e)\in M\), then \(r(e)\in M\);
  \item \label{en:maxtail2}%
    if \(v\in M \cap V_\reg\), then there is \(e\in E\) with
    \(r(e)=v\) and \(s(e) \in M\);
  \item \label{en:maxtail3}%
    for all \(v,w\in M\), there is \(y\in M\) with \(v\ge y\) and
    \(w\ge y\).
  \end{enumerate}
  If~\(M\) is a maximal tail, let \(\Omega(M) = V\setminus M\).  Let
  \(\mathcal{M}(G)\) be the set of all maximal tails in~\(G\).  Let
  \(\mathcal{M}_\gamma(G) \subseteq \mathcal{M}(G)\) be the subset
  of all \(M\in \mathcal{M}(G)\) with the extra property that each
  cycle in~\(G|_M\) has an entry in~\(G|_M\).  Let
  \(\mathcal{M}_\tau(G) \defeq \mathcal{M}(G)\setminus
  \mathcal{M}_\gamma(G)\).
\end{definition}

Condition~\ref{en:maxtail1} holds if and only if \(v\ge w\) and
\(w\in M\) implies \(v\in M\).  Conditions \ref{en:maxtail1}
and~\ref{en:maxtail2} say exactly that \(\Omega(M)\) is hereditary
and saturated.  It is shown in~\cite{Hong-Szymanski:Primitive_graph}
that the gauge-invariant ideal corresponding to a hereditary
saturated subset~\(W \in \Ideals(G)\) is prime in the complete
lattice \(\Ideals^\T(\Cst(G))\) of gauge-invariant ideals if and
only if~\(V\setminus W\) satisfies~\ref{en:maxtail3} as well as
\ref{en:maxtail1} and~\ref{en:maxtail2}.  Equivalently, the map
\(M\mapsto \Omega(M)\) is a bijection between \(\mathcal{M}(G)\) and
the set of prime ideals in the lattice \(\Ideals(G)\).

\begin{proposition}[\cite{Hong-Szymanski:Primitive_graph}]
  \label{pro:Hong-Szymanski}
  Let \(M\in \mathcal{M}_\tau(G)\).  Then~\(M\) contains a unique
  simple cycle \(e_1 e_2 \dotsm e_k\), and this cycle has no entry
  in~\(M\).  Let \(v\in M\)
  be a vertex on this cycle.  Then
  \(\Cst(G)_{\Omega(M)} \idealin \Cst(G)\) is a prime ideal in
  \(\Cst(G)\) and the subquotient
  \[
    \Cst(G)|_M \defeq
    \Cst(G)_{\langle\Omega(M) \cup \{v\}\rangle} /
    \Cst(G)_{\Omega(M)}
  \]
  is gauge-simple and Morita--Rieffel equivalent to \(\Cont(\T)\).
  More precisely, the image of the corner \(p_v \Cst(G) p_v\) in
  \(\Cst(G) / \Cst(G)_{\Omega(M)}\) is isomorphic to \(\Cont(\T)\),
  generated by the image of \(t_{e_1} t_{e_2} \dotsm t_{e_k}\),
  which is unitary.  We call \(\Cst(G)|_M\) \emph{the gauge-simple
    subquotient of \(\Cst(G)\) associated to
    \(M\in\mathcal{M}_\tau(G)\).}
\end{proposition}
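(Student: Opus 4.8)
The plan is to separate the purely combinatorial content (the structure of the cycle) from the \Cstar{}algebraic content, and to establish the latter inside the quotient \(Q\defeq \Cst(G)/\Cst(G)_{\Omega(M)}\).  Since \(\Omega(M)=V\setminus M\) is hereditary and saturated by \ref{en:maxtail1} and~\ref{en:maxtail2}, and \(G\) is row-finite, \(Q\) is canonically the graph \Cstar{}algebra \(\Cst(G|_M)\); I write \(\bar x\) for the image in~\(Q\) of \(x\in\Cst(G)\), so that \(\bar p_v\neq 0\) precisely because \(v\in M\).  The key combinatorial observation is that a cycle has \emph{no entry in \(G|_M\)} exactly when each of its vertices receives a single edge of~\(G|_M\), which is then the cycle edge; in particular such a cycle is automatically simple and visits each vertex once.

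First I would settle the cycle structure.  Because \(M\notin\mathcal{M}_\gamma(G)\) there is a cycle in~\(G|_M\) without entry in~\(G|_M\), hence (by the observation above) a simple one, \(c=e_1\dotsm e_k\).  For uniqueness, suppose \(c'\) is another simple cycle of~\(G|_M\) without entry; fix a vertex \(v\) on~\(c\) and a vertex \(w\) on~\(c'\) and use~\ref{en:maxtail3} to obtain \(y\in M\) admitting paths \(y\to v\) and \(y\to w\) in~\(G|_M\).  Reading the path \(y\to v\) backwards from~\(v\): the edge entering a vertex of~\(c\) must be the cycle edge there (no entry), so the whole path lies on~\(c\) and \(y\in c\); symmetrically \(y\in c'\).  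Then the unique edge of~\(G|_M\) into~\(y\) is at once a \(c\)-edge and a \(c'\)-edge, so \(c\) and~\(c'\) share it, and propagating backwards gives \(c'=c\) up to cyclic rotation.  This proves the first two assertions.

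Next I would identify the corner.  Each vertex~\(w\) of~\(c\) receives only the cycle edge in~\(G|_M\), hence is regular there, so relation~\ref{en:CK3} in \(Q\cong\Cst(G|_M)\) reduces to \(\bar t_e\bar t_e^* =\bar p_w\) for that edge; with~\ref{en:CK1} this makes each \(\bar t_{e_i}\) a partial isometry identifying the corner of~\(Q\) at one endpoint of~\(e_i\) with the corner at the other.  Labelling so that \(r(e_1)=v\), the loop \(\bar u\defeq\bar t_{e_1}\dotsm\bar t_{e_k}\) (ordered compatibly with the cycle so the product is nonzero) is therefore a unitary in~\(\bar p_v Q\bar p_v\).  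Since each vertex of~\(c\) has a unique incoming edge in~\(G|_M\), any path of~\(G|_M\) ending at~\(v\) is forced, reading backwards, to run along~\(c\); so in the standard spanning set \(\bar t_\alpha\bar t_\beta^*\) of \(\bar p_v Q\bar p_v\) both \(\alpha\) and~\(\beta\) are partial traversals of~\(c\) determined by their length, and one computes that every such product is an integer power of~\(\bar u\).  Hence \(\bar p_v Q\bar p_v=\Cst(\bar u)\).  Finally, the gauge action descends to~\(Q\) and sends \(\bar u\mapsto z^k\bar u\); as each \(\gamma_z\) is an automorphism, the spectrum of~\(\bar u\) is invariant under multiplication by every \(k\)-th power, hence by all of~\(\T\), hence equals~\(\T\) (it is nonempty because \(\bar p_v\neq 0\)).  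So \(\bar p_v Q\bar p_v\cong\Cont(\T)\), generated by the image of \(t_{e_1}\dotsm t_{e_k}\): this is the ``more precisely'' clause.

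It remains to assemble the rest.  The subquotient \(\Cst(G)|_M\) is exactly the ideal of~\(Q\) generated by the projection~\(\bar p_v\) (the other generators \(p_w\), \(w\in\Omega(M)\), vanish in~\(Q\)); since \(\bar p_v\) is full in that ideal and \(\bar p_v J\bar p_v=\bar p_v Q\bar p_v\) for any ideal \(J\idealin Q\) containing~\(\bar p_v\), one gets \(\Cst(G)|_M\) Morita--Rieffel equivalent to \(\bar p_v Q\bar p_v\cong\Cont(\T)\).  The descended gauge action on this corner is rotation by~\(z^k\), which has no nonzero invariant ideals, and the Morita--Rieffel equivalence is gauge-equivariant, so \(\Cst(G)|_M\) has no nontrivial gauge-invariant ideals, i.e.\ is gauge-simple.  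Primeness of \(\Cst(G)_{\Omega(M)}\) in the lattice \(\Ideals^\T(\Cst(G))\) of gauge-invariant ideals is immediate from the criterion recalled just before the proposition, applied to \(W=\Omega(M)\): the complement~\(M\) is a maximal tail, so it satisfies \ref{en:maxtail1}, \ref{en:maxtail2} and~\ref{en:maxtail3}.  (If ``prime in \(\Cst(G)\)'' is meant among \emph{all} closed ideals, one additionally uses Hong--Szyma\'nski's description of \(\Prim\Cst(G)\), which I would cite rather than reprove.)  The main obstacle is the uniqueness half of the cycle-structure step: it hinges on combining~\ref{en:maxtail3} with the no-entry condition to force any simple cycle of~\(G|_M\) onto~\(c\), and one must keep the direction of ``no entry'' aligned with this paper's source/range conventions, given that \cite{Hong-Szymanski:Primitive_graph} uses the opposite one; after that, the \Cstar{}algebraic part is routine graph-algebra bookkeeping.
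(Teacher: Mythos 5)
The paper offers no proof of this proposition: it is imported from Hong--Szyma\'nski, and the only argument supplied in the text is the short paragraph after the statement, which derives the Morita--Rieffel equivalence of \(\Cst(G)|_M\) with the corner \(p_v\bigl(\Cst(G)/\Cst(G)_{\Omega(M)}\bigr)p_v\) from fullness of the image of \(p_v\). Your write-up is therefore a genuinely self-contained proof rather than a variant of the paper's, and it is essentially correct: reading paths backwards into the entry-free cycle and combining this with (MT3) is exactly the right mechanism for uniqueness; and identifying the corner as \(\Cst(\bar u)\) for the loop unitary \(\bar u\), then forcing \(\sigma(\bar u)=\T\) by invariance of the spectrum under the descended gauge action, is the standard and correct way to obtain \(\Cont(\T)\). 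The passage from the corner to the subquotient and to gauge-simplicity via the equivariant Rieffel correspondence matches the paper's own remark.

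Two points of interpretation are worth recording. First, you prove uniqueness only among simple cycles \emph{without entry}, and that is in fact all that can be true: for the graph with vertices \(a,b\), a loop at each, and one edge from \(a\) to \(b\), the set \(M=\{a,b\}\) is a maximal tail in \(\mathcal{M}_\tau(G)\) containing two simple cycles, of which only the loop at \(a\) is entry-free. So the clause ``\(M\) contains a unique simple cycle'' must be read as uniqueness of the entry-free simple cycle --- which is what you establish and what the later applications (Lemma~\ref{lem:iso_on_circle_primes}, Theorem~\ref{the:classify_graph_with_circles}) actually use. Second, your parenthetical fallback on primeness would, if pursued, be proving a false statement: for \(M\in\mathcal{M}_\tau(G)\) the quotient \(\Cst(G)/\Cst(G)_{\Omega(M)}\) contains an ideal Morita--Rieffel equivalent to \(\Cont(\T)\), and the ideals corresponding to two disjoint open arcs are nonzero ideals of the quotient with zero intersection, so \(\Cst(G)_{\Omega(M)}\) is \emph{not} prime among all closed ideals. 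The intended meaning is primeness in the lattice \(\Ideals^\T(\Cst(G))\cong\Ideals(G)\), which is how the proposition is used later, and your main argument via the criterion recalled before the proposition establishes exactly that; I would simply delete the parenthetical.
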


The corner \(p_v (\Cst(G)/ \Cst(G)_{\Omega(M)}) p_v\) is
Morita--Rieffel equivalent to the ideal in
\(\Cst(G)/ \Cst(G)_{\Omega(M)}\) generated by the image of~\(p_v\).
This ideal is gauge-invariant and corresponds to the smallest
hereditary and saturated subset of~\(V\) containing \(\Omega(M)\)
and~\(v\).  Then it must be \(\Cst(G)|_M\).

Now we return to the situation of
Theorem~\ref{the:graph_homotopy_equivalence}.  That is, $G$
and~$G'$ are countable graphs, \(A=\Cst(G)\), \(B=\Cst(G')\), and
there are an order isomorphism
\(\psi\colon \Idealsc(G) \cong \Idealsc(A) \congto \Idealsc(B) \cong
\Idealsc(G')\) and diagram isomorphisms
\begin{align*}
  \varphi_0^+\colon \K_0^+(A,\Idealsc)
  &\congto \psi^*\K_0^+(B,\Idealsc),\\
  \varphi_1\colon \K_1(A,\Idealsc)
  &\congto \psi^*\K_1(B,\Idealsc).
\end{align*}
These isomorphisms induce a monoid isomorphism
\(\varphi_0\colon \prop(A) \congto \prop(B)\).  We assume that the
obstruction class of $(\varphi_0,\varphi_1)$ vanishes.

\begin{lemma}
  \label{lem:iso_on_circle_primes}
  The order isomorphism
  \(\psi\colon \Idealsc(G) \congto \Idealsc(G')\) extends uniquely
  to an order isomorphism
  \(\bar\psi\colon \Ideals(G) = \Idealsc^\infty(G) \congto
  \Idealsc^\infty(G') = \Ideals(G')\).  The extension maps the
  subsets \(\mathcal{M}_\gamma(G)\) and \(\mathcal{M}_\tau(G)\)
  bijectively onto \(\mathcal{M}_\gamma(G')\) and
  \(\mathcal{M}_\tau(G')\), respectively.
\end{lemma}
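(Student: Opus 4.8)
The plan is to first establish the existence and uniqueness of the order-isomorphism $\bar\psi\colon \Ideals(G)\congto\Ideals(G')$ by invoking the lattice machinery already set up. By Theorem~\ref{the:graph_Idealsc} we know $\Idealsc^\infty(G)\cong\Ideals(G)$ and likewise for $G'$, and that $\Idealsc(G)\subseteq\Idealsc^\infty(G)$ is the subset of \emph{finitely generated} hereditary saturated subsets. Remark~\ref{rem:Idealsc_infty} shows that any monoid (equivalently, order) homomorphism $\Idealsc(G)\to\Idealsc(G')$ extends uniquely to a map $\Idealsc^\infty(G)\to\Idealsc^\infty(G')$ commuting with infinite suprema; applying this to $\psi$ and to $\psi^{-1}$ yields mutually inverse order-preserving maps, hence the desired order-isomorphism $\bar\psi$. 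Its uniqueness among suprema-preserving extensions is immediate since every element of $\Idealsc^\infty(G)$ is a supremum of elements of $\Idealsc(G)$; but in fact any order-isomorphism of the lattices restricts to one of the finitely generated parts and must agree with $\psi$ there, so $\bar\psi$ is the unique extension, period.

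Next, I would identify $\mathcal M(G)$ with the set of \emph{prime} elements of the lattice $\Ideals(G)$ via $M\mapsto\Omega(M)$, as recalled in the paragraph preceding the statement (an element $W$ is prime exactly when $V\setminus W$ satisfies \ref{en:maxtail1}--\ref{en:maxtail3}). Since $\bar\psi$ is a lattice isomorphism, it maps prime elements to prime elements, so it induces a bijection $\mathcal M(G)\congto\mathcal M(G')$, $M\mapsto M'$ where $\Omega(M')=\bar\psi(\Omega(M))$. It remains to check that this bijection respects the decomposition $\mathcal M=\mathcal M_\gamma\sqcup\mathcal M_\tau$, i.e.\ that it sends $\mathcal M_\tau(G)$ onto $\mathcal M_\tau(G')$. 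The point is to characterize membership in $\mathcal M_\tau$ in terms of the lattice $\Ideals(G)$ near the prime $\Omega(M)$, in a way visibly preserved by $\bar\psi$.

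For this characterization I would use Proposition~\ref{pro:Hong-Szymanski}: if $M\in\mathcal M_\tau(G)$, there is a vertex $v$ on the unique simple cycle in $M$, and the subquotient $\Cst(G)|_M=\Cst(G)_{\langle\Omega(M)\cup\{v\}\rangle}/\Cst(G)_{\Omega(M)}$ is Morita--Rieffel equivalent to $\Cont(\T)$. Lattice-theoretically, $\langle\Omega(M)\cup\{v\}\rangle$ is the unique element of $\Ideals(G)$ that covers $\Omega(M)$ (is the smallest hereditary saturated set strictly containing it): indeed $M\setminus\{v\}$ is again a maximal tail by (MT2) applied to the cycle, so $\Omega(M)\cup\{v\}$ is already hereditary and saturated and is a cover of $\Omega(M)$; conversely any hereditary saturated $W\supsetneq\Omega(M)$ contains some $w\in M$, hence by (MT3) contains a common lower bound of $w$ and $v$, and chasing the cycle back up (MT2 again) one gets $v\in W$. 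In contrast, for $M\in\mathcal M_\gamma(G)$ the prime $\Omega(M)$ has \emph{no} unique cover: one can produce two incomparable hereditary saturated sets strictly above $\Omega(M)$ using the fact that some cycle in $G|_M$ has an entry, or, more cleanly, $\mathcal M_\gamma(G)$ is exactly the set of $M$ for which $\Omega(M)$ is a limit of the primes strictly below it. Thus "$\Omega(M)$ has a unique cover in $\Ideals(G)$'' is an order-theoretic property distinguishing $\mathcal M_\tau$ from $\mathcal M_\gamma$, and it is preserved by the lattice isomorphism $\bar\psi$; this gives $\bar\psi(\mathcal M_\tau(G))=\mathcal M_\tau(G')$ and hence $\bar\psi(\mathcal M_\gamma(G))=\mathcal M_\gamma(G')$.

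The main obstacle I anticipate is pinning down the precise order-theoretic characterization of $\mathcal M_\tau$ versus $\mathcal M_\gamma$ and verifying it carefully from Hong--Szyma\'nski's description --- in particular checking both that a type-$\tau$ prime has a unique cover and that a type-$\gamma$ prime does not (the latter requires exhibiting two incomparable covers, or arguing the prime is a nontrivial supremum of smaller primes, using the entry condition on cycles in $G|_M$). Everything else is a formal consequence of the lattice isomorphism already produced. An alternative, if the "unique cover'' argument proves delicate, is to argue $\K$-theoretically: for $M\in\mathcal M_\tau(G)$ the subquotient at the cover of $\Omega(M)$ is Morita equivalent to $\Cont(\T)$, detectable via the diagram $\K_1(\Cst(G),\Idealsc)$; but since the present lemma is purely about the order isomorphism $\psi$ (not yet invoking $\varphi_1$), I would prefer to keep the proof lattice-theoretic as sketched.
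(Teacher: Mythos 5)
Your first two steps (constructing $\bar\psi$ via suprema and transporting prime elements of the lattice to get a bijection $\mathcal{M}(G)\congto\mathcal{M}(G')$) match the paper. But the heart of your argument --- that membership in $\mathcal{M}_\tau$ is characterised \emph{order-theoretically} by ``$\Omega(M)$ has a unique cover in $\Ideals(G)$'' --- is false, and this is a genuine gap. The unique-cover condition is necessary for $M\in\mathcal{M}_\tau(G)$ (the paper records exactly this and nothing more), but it is not sufficient. Concretely, let $G$ be a single vertex with one loop, so $\Cst(G)\cong\Cont(\T)$, and let $G'$ be a single vertex with two loops, so $\Cst(G')\cong\mathcal{O}_2$. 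Both ideal lattices are the two-element chain, so an order isomorphism $\psi$ exists; in both graphs the unique maximal tail $M=\{v\}$ has $\Omega(M)=\emptyset$, which has the unique cover $\{v\}$. Yet $M\in\mathcal{M}_\tau(G)$ while $M\in\mathcal{M}_\gamma(G')$, because with two loops each loop is an entry for the other. Your subsidiary claims for the $\gamma$-case --- that one can exhibit two incomparable hereditary saturated sets strictly above $\Omega(M)$, or that $\Omega(M)$ is a nontrivial supremum of smaller primes --- both fail here, since $\emptyset$ is the bottom of the lattice; the same happens for a single vertex with no edges (an AF subquotient).

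The fix is exactly the ``alternative'' you set aside. The lemma is stated under the standing hypotheses of the section, which include the diagram isomorphisms $\varphi_0^+$ and $\varphi_1$, and the paper's proof uses them essentially; the statement is not ``purely about the order isomorphism.'' After isolating the unique cover $H_2$ of $H=\Omega(M)$ (which, as you say, is order-theoretic and necessary), the subquotient $\Cst(G)_{H_2}/\Cst(G)_H$ is a gauge-simple graph $\Cst$-algebra, hence AF, purely infinite simple, or Morita--Rieffel equivalent to $\Cont(\T)$. The paper distinguishes these three cases by whether the quotient of $\K_0^+(\Cst(G)_{H_2})$ by the image of $\K_0^+(\Cst(G)_H)$ is a group, and by injectivity of $\K_0(\Cst(G)_H)\to\K_0(\Cst(G)_{H_2})$ together with surjectivity of $\K_1(\Cst(G)_H)\to\K_1(\Cst(G)_{H_2})$ --- data read off from the diagrams $\K_0^+(\,\cdot\,,\Idealsc)$ and $\K_1(\,\cdot\,,\Idealsc)$ and hence preserved by $(\psi,\varphi_0^+,\varphi_1)$. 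The example above shows that no purely lattice-theoretic argument can work.
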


\begin{proof}
  As in the proof of
  Proposition~\ref{pro:from_compact_to_all_ideals}, the
  extension~\(\bar\psi\) exists by
  Lemma~\ref{lem:from_compact_to_all_subsets}.  The order
  isomorphism~\(\bar\psi\) restricts to a bijection between the
  prime ideals.  Thus it induces a bijection
  \(\psi\colon \mathcal{M}(G)\congto\mathcal{M}(G')\).  We must show
  that \(M\in\mathcal{M}(G)\) belongs to \(\mathcal{M}_\tau(G)\) if
  and only if \(\psi(M)\in\mathcal{M}(G')\) belongs to
  \(\mathcal{M}_\tau(G')\).  To do this, we
  characterise through the diagrams \(\K_0^+(\Cst(G),\Idealsc)\) and
  \(\K_1(\Cst(G),\Idealsc)\) when \(M\in\mathcal{M}(G)\) belongs to
  \(\mathcal{M}_\tau(G)\).  First, Remark~\ref{rem:Idealsc_infty}
  shows that \(\varphi_0^+\) and~\(\varphi_1\) extend to
  isomorphisms of diagrams
  \(\K_0^+(\Cst(G),\Idealsc^\infty)\congto\K_0^+(\Cst(G'),\Idealsc^\infty)\)
  and
  \(\K_1(\Cst(G),\Idealsc^\infty)\congto
  \K_1(\Cst(G'),\Idealsc^\infty)\).  Theorem~\ref{the:graph_Idealsc}
  identifies \(\Idealsc^\infty(\Cst(G))\) with~\(\Ideals(G)\), the
  lattice of hereditary saturated subsets.

  Let \(M\in\mathcal{M}(G)\).  Then
  \(H\defeq \Omega(M) = V\setminus M \in\Ideals(G)\).  If
  \(M\in\mathcal{M}_\tau(G)\), then we define
  \(H_2 \defeq \langle\Omega(M) \cup \{v\}\rangle \in\Ideals(G)\) as
  in Proposition~\ref{pro:Hong-Szymanski}.  Then
  \(H\subsetneq H_2\).  If \(K\in\Ideals(G)\) also satisfies
  \(H\subsetneq K\), then \(K\cap H_2 = H\) or \(K\cap H_2 = H_2\)
  because \(K\cap H_2 \in \Ideals(G)\) and the subquotient
  \(\Cst(G)|_M \defeq \Cst(G)_{H_2} / \Cst(G)_H\) has no
  gauge-invariant ideals.  The case \(K\cap H_2 = H\) is impossible
  because~\(H\) is prime.  It follows that \(H_2\in \Ideals(G)\) is
  the unique minimal element of the set
  \(\setgiven{K\in\Ideals(G)}{H\subsetneq K}\).  Therefore, the
  existence of such a unique minimal element~\(H_2\) is necessary
  for \(M\in\mathcal{M}_\tau(G)\).  This property is preserved by
  order isomorphisms.  We assume from now on that such an~\(H_2\)
  exists and, in this generality, we define
  \[
    \Cst(G)|_M \defeq \Cst(G)_{H_2} / \Cst(G)_H.
  \]

  A quotient of a graph \Cstar{}algebra by a gauge-invariant ideal
  is again a graph \Cstar{}algebra (see
  \cite{Bates-Pask-Raeburn-Szymanski:Row_finite}*{Thereom~4.1}), and
  the action induced by the gauge action is the usual gauge action.
  Any gauge-invariant ideal in a graph \Cstar{}algebra is again
  isomorphic to a graph \Cstar{}algebra by a gauge-equivariant
  isomorphism (see~\cite{Vas:Graded_ideal_Leavitt}).
  Therefore, \(\Cst(G)|_M\) is
  \(\T\)\nb-equivariantly isomorphic to the graph
  \Cstar{}algebra of some graph~\(G_M\).  This subquotient
  of~\(\Cst(G)\) has no gauge-invariant ideals because~\(H_2\) is
  the unique minimal element of
  \(\setgiven{K\in\Ideals(G)}{H\subsetneq K}\).  Equivalently, the
  graph~\(G_M\) is cofinal.  Now there are three possible cases:
  \begin{enumerate}
  \item \(G_M\) has no cycles; then \(\Cst(G_M)\) is a simple
    AF-algebra;
  \item \(G_M\) has more than one simple cycle; then \(\Cst(G_M)\) is
    simple purely infinite;
  \item \(G_M\) contains exactly one simple cycle.
  \end{enumerate}
  In the third case, the unique simple cycle cannot have an entry
  because then \(\Cst(G_M)\) would be simple by
  \cite{Bates-Hong-Raeburn-Szymanski:Ideal_structure}*{Proposition~5.1},
  and then we must be in one of the first two cases, by the well
  known dichotomy for simple graph \Cstar{}algebras.  So
  \(M\in\mathcal{M}_\tau(G)\) if and only if
  \(\Cst(G_M)\) is Morita--Rieffel equivalent to \(\Cont(\T)\).

  The graph~\(G_M\) is described explicitly in
  \cite{Bates-Pask-Raeburn-Szymanski:Row_finite}
  and~\cite{Vas:Graded_ideal_Leavitt}.  Its vertex set is obtained
  from \(M = H_2 \setminus H\) by adding certain vertices.  An
  inspection shows that the projections~\(p_v\) for the added
  vertices are all Murray--von Neumann equivalent to projections
  from vertices in~\(M\).
  Therefore, Theorem~\ref{the:prop_graph} implies that any
  projection in \(\Cst(G_M)\) lifts to a projection in
  \(\Cst(G)_{H_2}\).  That is, the map
  \(\K_0^+(\Cst(G)_{H_2}) \to \K_0^+(\Cst(G_M))\) is surjective.
  This remains so on the Grothendieck groups.  Therefore, the
  \(\K\)\nb-theory long exact sequence for the \Cstar{}algebra
  extension \(\Cst(G)_H \into \Cst(G)_{H_2} \prto \Cst(G)|_M\) looks
  as follows:
  \begin{equation}
    \label{eq:exact_sequence_H_H2}
    \begin{tikzcd}
      \K_0(\Cst(G)_H) \ar[r, "i_0"] &
      \K_0(\Cst(G)_{H_2}) \ar[r, ->>] &
      \K_0(\Cst(G)|_M) \ar[d, "0"] \\
      \K_1(\Cst(G)|_M) \ar[u] &
      \K_1(\Cst(G)_{H_2}) \ar[l, "i_1"] &
      \K_1(\Cst(G)_H). \ar[l, >->]
    \end{tikzcd}
  \end{equation}
  If \(M\in\mathcal{M}_\tau(G)\), then \(\Cst(G)|_M\) is
  Morita--Rieffel equivalent to \(\Cont(\T)\) by
  Proposition~\ref{pro:Hong-Szymanski}
  and therefore
  \(\K_0^+(\Cst(G)|_M) \cong \N\) and \(\K_1(\Cst(G)|_M) \cong \Z\).
  In contrast, if \(M\not\in\mathcal{M}_\tau(G)\), then either
  \(\Cst(G)|_M\) is purely infinite, so that \(\K_0^+(\Cst(G)|_M)\) is a
  group, or \(\Cst(G)|_M\) is AF, so that \(\K_1(\Cst(G)|_M) \cong 0\).
  Now we check that these three cases can be distinguished looking
  only at the maps
  \[
    \K_0^+(\Cst(G)_H) \to \K_0^+(\Cst(G)_{H_2}),\qquad
    \K_i(\Cst(G)_H) \to \K_i(\Cst(G)_{H_2})
  \]
  for \(i=0,1\) induced by the inclusion \(H\subsetneq H_2\).
  Indeed, if \(M\in\mathcal{M}_\tau(G)\), then the quotient of
  \(\K_0^+(\Cst(G)_{H_2})\) by the image of \(\K_0^+(\Cst(G)_H)\)
  cannot be a group because it maps to the monoid
  \(\K_0^+(\Cst(G)|_M)\cong \N\), which is not a group,
  and it cannot happen that the map
  \(\K_0(\Cst(G)_H) \to \K_0(\Cst(G)_{H_2})\) is injective and
  \(\K_1(\Cst(G)_H) \to \K_1(\Cst(G)_{H_2})\) is surjective, because
  that would force \(\K_1(\Cst(G)|_M)\) to vanish.  If
  \(M\not\in\mathcal{M}_\tau(G)\) and \(\Cst(G)|_M\) is
  purely infinite simple, then the quotient of
  \(\K_0^+(\Cst(G)_{H_2})\) by the image of \(\K_0^+(\Cst(G)_H)\) is
  a group.  And if \(M\not\in\mathcal{M}_\tau(G)\) and
  \(\Cst(G)|_M\) is AF, then the map
  \(\K_0(\Cst(G)_H) \to \K_0(\Cst(G)_{H_2})\) is injective and
  \(\K_1(\Cst(G)_H) \to \K_1(\Cst(G)_{H_2})\) is surjective.  Thus
  we can read off from \(\Ideals(G)\) and the diagrams
  \(\K_0^+(\Cst(G),\Ideals)\) and \(\K_1(\Cst(G),\Ideals)\) whether
  or not \(M\not\in\mathcal{M}_\tau(G)\).  Since these invariants do
  not distinguish our two graphs, it follows that the bijection
  \(\psi\colon \mathcal{M}(G)\congto\mathcal{M}(G')\) maps
  \(\mathcal{M}_\tau(G)\) onto \(\mathcal{M}_\tau(G')\).
\end{proof}

\begin{theorem}
  \label{the:classify_graph_with_circles}
  Let $G$ and~$G'$ be countable graphs.  Define \(A\defeq \Cst(G)\),
  \(B\defeq \Cst(G')\).  Let
  \begin{align*}
    \psi\colon \Idealsc(G) \cong \Idealsc(A)
    &\congto \Idealsc(B) \cong \Idealsc(G')
    \\\shortintertext{be an order isomorphism and let}
    \varphi_0^+\colon \K_0^+(A,\Idealsc)
    &\congto \psi^*\K_0^+(B,\Idealsc),\\
    \varphi_1\colon \K_1(A,\Idealsc)
    &\congto \psi^*\K_1(B,\Idealsc)
  \end{align*}
  be diagram isomorphisms.  These isomorphisms induce a monoid
  isomorphism \(\varphi_0\colon \prop(A) \congto \prop(B)\).  Assume
  that the obstruction class of $(\varphi_0,\varphi_1)$ vanishes.
  Then there is an \(A\)\nb-\(B\)-correspondence~\(\Hilm_{A B}\)
  that induces the isomorphisms $\varphi_0^+$ and~$\varphi_1$ and
  that induces a Morita--Rieffel equivalence between the
  gauge-simple subquotients of \(A\) and~\(B\) associated to \(M\)
  and \(\psi(M)\) for all \(M\in \mathcal{M}_\tau(G)\).  In
  addition, there is a \(B\)\nb-\(A\)-correspondence~\(\Hilm_{B A}\)
  that is inverse to~\(\Hilm_{A B}\) up to homotopy, and such that
  all the correspondences in the homotopies between the composite
  correspondences \(\Hilm_{A B} \otimes_B \Hilm_{B A}\) and
  \(\Hilm_{B A} \otimes_A \Hilm_{A B}\) and the identity
  correspondences induce the identity correspondences on the
  gauge-simple subquotients associated to all
  \(M\in \mathcal{M}_\tau(G)\), and similarly for
  \(\Hilm_{B A} \otimes_A \Hilm_{A B}\).
\end{theorem}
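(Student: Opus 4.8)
The plan is to rerun the proof of Theorem~\ref{the:graph_homotopy_equivalence} while keeping track of the \(\Cont(\T)\)\nb-subquotients. By Lemma~\ref{lem:iso_on_circle_primes} the order isomorphism~\(\psi\) extends to \(\bar\psi\colon\Ideals(G)\congto\Ideals(G')\) carrying \(\mathcal{M}_\tau(G)\) onto \(\mathcal{M}_\tau(G')\); write \(M'\defeq\psi(M)\). Since \(\bar\psi\) preserves the minimal element~\(H_2\) over \(H\defeq\Omega(M)\) from Proposition~\ref{pro:Hong-Szymanski}, the gauge-simple subquotient \(\Cst(G)|_M = \Cst(G)_{H_2}/\Cst(G)_H\) corresponds to \(\Cst(G')|_{M'}\); both are Morita--Rieffel equivalent to \(\Cont(\T)\), and the isomorphisms \(\varphi_0^+\) and~\(\varphi_1\), extended to \(\Idealsc^\infty\) via Remark~\ref{rem:Idealsc_infty}, restrict to isomorphisms of their \(\K\)\nb-theory: \(\varphi_0^+\) to the identity of \(\N = \K_0^+(\Cont(\T))\) and \(\varphi_1\) to an automorphism \(\varepsilon_M = \pm1\) of \(\Z = \K_1(\Cont(\T))\). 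As the obstruction class of \((\varphi_0,\varphi_1)\) vanishes, so does that of \((\varphi_0^{-1},\varphi_1^{-1})\) by Lemma~\ref{lem:obstruction_inverse}; so Proposition~\ref{pro:lift_to_K0_K1} and the \(\K_1\)\nb-surjectivity from Theorem~\ref{the:graph_K1-bijective} give correspondences \(\Hilm_{A B}\) and~\(\Hilm_{B A}\) lifting \((\varphi_0,\varphi_1)\) and~\((\varphi_0^{-1},\varphi_1^{-1})\). Two further points must be arranged: (i) these correspondences restrict to honest Morita--Rieffel equivalences between the gauge-simple subquotients; and (ii) the back-and-forth homotopies of Lemma~\ref{lem:graph_back_forth_homotopy} can be chosen to restrict to the constant identity correspondence on those subquotients.

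For (i), fix \(M\in\mathcal{M}_\tau(G)\) with unique simple cycle through a vertex~\(v\), which by Proposition~\ref{pro:Hong-Szymanski} has no entry in~\(M\). Describe \(\Hilm_{A B}\) by data \(\bigl((\Hilm_w)_w,(U_w)_{w\in V_\reg}\bigr)\) as in Theorem~\ref{the:lift_prop_CB}, restrict to the ideal \(\Cst(G)_{H_2}\), and pass to the quotient by \(\Cst(G)_H\). All modules~\(\Hilm_w\) with \(w\in H\) die, and because the cycle has no entry in~\(M\), for each cycle vertex the only surviving summand of \(\bigoplus_{e\in E^w}\Hilm_{\s(e)}\) is the one belonging to the incoming cycle edge. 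Cutting down by the corners at~\(v\) and at the corresponding cycle vertex of~\(G'\), the restricted correspondence becomes a proper self-correspondence of \(\Cont(\T)\); its underlying module has rank one because \(\varphi_0^+\) restricts to the identity of~\(\N\), hence is \(\Cont(\T)\) itself, and its left action is \(f\mapsto f\circ g_M\) for a continuous unimodular \(g_M\colon\T\to\T\)---the image of the word in the~\(T_e\) representing the canonical unitary of~\(\Cst(G)|_M\)---of winding number~\(\varepsilon_M\) since \(\varphi_1\) acts by~\(\varepsilon_M\) on~\(\K_1\). In the construction of Proposition~\ref{pro:lift_to_K0_K1} we may still replace the~\(U_w\) at the cycle vertices by \(\Upsilon_w U_w\) with \(\Upsilon_w\in\U(\Comp(\Hilm_w))\); taking each such \(\Upsilon_w\) null-homotopic in \(\U(\Comp(\Hilm_w))\)---so that \([\Upsilon_w]=0\) and \(\varphi_1\) is unchanged---but with prescribed, arbitrary winding-number-zero restriction \(e^{ih}\in\U(\Cont(\T))\) on the subquotient corner, and writing \(g_M = z^{\varepsilon_M}e^{ih_M}\), we absorb \(e^{ih_M}\) into one such modification and obtain \(g_M = z^{\varepsilon_M}\). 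Then \(z\mapsto z^{\varepsilon_M}\) is a homeomorphism of~\(\T\), so the left action is an isomorphism onto the compacts and the restricted, cut-down correspondence is an imprimitivity \(\Cont(\T),\Cont(\T)\)-bimodule; composing with the Morita equivalences of Proposition~\ref{pro:Hong-Szymanski} exhibits \(\Hilm_{A B}\) as restricting to a Morita--Rieffel equivalence \(\Cst(G)|_M\sim\Cst(G')|_{M'}\). The same argument over the circle primes of~\(G'\) yields \(\Hilm_{B A}\) restricting to a Morita--Rieffel equivalence on each \(\Cst(G')|_{M'}\), realised again by the bimodule of \(z\mapsto z^{\varepsilon_M}\) (since \(\varphi_1^{-1}\) acts by \(\varepsilon_M^{-1}=\varepsilon_M\)).

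For (ii), set \(\Hilm_A\defeq\Hilm_{A B}\otimes_B\Hilm_{B A}\) and \(\Hilm_B\defeq\Hilm_{B A}\otimes_A\Hilm_{A B}\); these induce the identity on~\(\prop\) and on the \(\K_1\)\nb-diagrams. On a circle subquotient they restrict, after cutting down, to the composite of the bimodule of \(z\mapsto z^{\varepsilon_M}\) with itself, i.e.\ to the bimodule of \(z\mapsto z^{\varepsilon_M^2}=z\)---literally the identity correspondence. Now run the proof of Lemma~\ref{lem:graph_back_forth_homotopy}, writing~\(\Hilm_A\) as the correspondence of a \Star{}homomorphism \(\sigma\colon A\to A\) with \(\sigma(p_v)=p_v\) and \(\sigma(t_e)=\Upsilon_{\rg(e)}t_e\). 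Since \(\Hilm_A\) restricts to the identity correspondence on each circle subquotient, we may---using the remarks after Lemma~\ref{lem:difference_unitaries}, i.e.\ a suitable isomorphism of correspondences whose components are prescribed on the \(\Cont(\T)\)\nb-corners and then lifted---arrange that the~\(\Upsilon_v\) restrict to the unit on every circle subquotient. Then the correction unitaries \(\Upsilon_{\rg(e)}^*\sigma(\Upsilon_{\rg(e)})\), and their analogues for~\(\sigma'\), restrict to the unit there, so the \(\K_1\)\nb-injectivity homotopies extracted in Lemma~\ref{lem:graph_back_forth_homotopy} from Theorem~\ref{the:graph_K1-bijective} may be taken stationary at the unit on each circle subquotient. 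The resulting homotopies between \(\Hilm_A,\Hilm_B\) and the identity correspondences therefore restrict to the constant identity correspondence on every gauge-simple subquotient; this is~(ii), and together with~(i) and the matching of circle primes it gives the theorem.

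The main obstacle is step~(i): upgrading the restriction of~\(\Hilm_{A B}\) to a \(\Cont(\T)\)\nb-subquotient from a mere degree-\(\pm1\) self-correspondence \(f\mapsto f\circ g_M\) to an \emph{honest} Morita--Rieffel equivalence. This needs the precise bookkeeping of how the data \(\bigl((\Hilm_w),(U_w)\bigr)\) degenerates modulo \(\Cst(G)_{\Omega(M)}\)---crucially exploiting that the simple cycle in~\(M\) has no entry in~\(M\), so the modification of the cycle unitaries that makes~\(g_M\) injective is localised on the subquotient and leaves \(\varphi_0\) and~\(\varphi_1\) untouched---together with, in step~(ii), the verification that the \(\K_1\)\nb-injectivity homotopies can be chosen compatibly on the countably many (a priori overlapping) circle subquotients.
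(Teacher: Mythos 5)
Your step~(i) is essentially the paper's own argument: identify the induced self\nb-map of the corner \(\Cont(\T)\cong p_v\Cst(G)|_Mp_v\) as \(z\mapsto g_M\) with \(g_M\) of degree \(\pm1\) (the paper writes this as \(WzW^*=\bar\zeta\cdot z^{\pm1}\) with \(\bar\zeta\) null\nb-homotopic), lift the null\nb-homotopic discrepancy to a unitary \(\Upsilon_v\) homotopic to~\(1\), and multiply it into the cycle unitary~\(U_v\) so that the induced map becomes exactly \(z\mapsto z^{\pm1}\) and hence an isomorphism onto the compacts; your use of the fact that the cycle has no entry in~\(M\) to localise the correction is the same as the paper's remark that \(z\) contains a matrix coefficient of~\(U_v\) exactly once, and your direct computation \(\varepsilon_M^2=1\) for the composites is a harmless variant of the paper's argument that \(z\mapsto z^{-1}\) cannot occur for a correspondence homotopic to the identity.

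The gap is in step~(ii). You assert that, after normalising the \(\Upsilon_v\) describing \(\Hilm_A\) to restrict to the unit on every circle corner, ``the \(\K_1\)\nb-injectivity homotopies \dots{} may be taken stationary at the unit on each circle subquotient.'' \(\K_1\)\nb-injectivity of \(p_vAp_v\) only produces \emph{some} null\nb-homotopy of \(\Upsilon_{\rg(e)}^*\sigma(\Upsilon_{\rg(e)})\); its image under the surjection \(p_vAp_v\to\Cont(\T)\) is then a loop at~\(1\) that may be nontrivial, and upgrading this to a homotopy that is constantly~\(1\) on the corner is a relative lifting statement that you neither formulate nor prove. Likewise, your preliminary normalisation prescribes the components of an isomorphism of correspondences on the \(\Cont(\T)\)\nb-corners ``and then lift[s]'' them; lifting an arbitrary unitary of \(\Cont(\T)\) along this surjection is obstructed by index theory, so at the very least you would have to invoke the good index theory established in the proof of Theorem~\ref{the:graph_K1-bijective} and check that the index obstruction vanishes. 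The paper sidesteps both problems by correcting the homotopy \emph{a posteriori}: it takes the homotopy \((\Hilm_t)\) supplied by Lemma~\ref{lem:graph_back_forth_homotopy}, observes that on each circle corner it induces \(z\mapsto\bar\zeta_t\cdot z\) with \(\bar\zeta_0=\bar\zeta_1=1\), lifts the resulting path of unitaries (a norm\nb-continuous path starting at~\(1\), which always lifts along a surjection) and multiplies it into the data of~\(\Hilm_t\), so that every correspondence in the homotopy induces the identity on the corner while the endpoints are unchanged. You should either adopt this a posteriori correction or supply the missing relative lifting arguments; as written, step~(ii) does not follow from the results you cite.
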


\begin{proof}
  First, we fix \(M\in \mathcal{M}_\tau(G)\) and write it as
  \(M = H_2 \setminus H\) with hereditary and saturated subsets
  \(H\) and~\(H_2\) as above.  Let \(H_2' \defeq \psi(H_2)\) and
  \(H' \defeq \psi(H)\) in \(\Ideals(G')\), and let
  \(M'\defeq H_2' \setminus H'\).  Then
  \(M'\in \mathcal{M}_\tau(G')\) by
  Lemma~\ref{lem:iso_on_circle_primes}.

  Theorem~\ref{the:graph_homotopy_equivalence} shows that there is a
  \(\Cst(G)\)-\(\Cst(G')\)-correspondence~\(\Hilm_{A B}\) that
  induces the maps \(\psi\), \(\varphi_0^+\) and~\(\varphi_1\) on
  our invariants.  In particular, this correspondence induces
  correspondences between the gauge-invariant ideals \(\Cst(G)_H\)
  and \(\Cst(G')_{\psi(H)}\) for all \(H\in\Idealsc(G)\); this
  extends automatically to \(H\in \Ideals(G)\).  And then it also
  induces a proper correspondence~\(\Hilm_{A B,M}\) between the
  subquotients \(\Cst(G)|_M\) and \(\Cst(G')|_{M'}\).  Since
  our correspondence is a homotopy equivalence that preserves the
  gauge-invariant ideals, it induces an isomorphism on
  \(\K_*(\Cst(G)_H)\) and \(\K_*(\Cst(G)_{H_2})\)
  in~\eqref{eq:exact_sequence_H_H2}.  By the Five Lemma, it induces
  an isomorphism
  \(\K_*(\Cst(G)|_M) \congto \K_*(\Cst(G')|_{M'})\) as well.
  In addition, this isomorphism on \(\K_0(\Cst(G)|_M) \cong \Z\)
  maps the positive cone~\(\N\) into itself.  Thus it is the
  identity map.
  
  Let \(v\in M\) and \(v' \in M'\) be vertices on the unique cycles
  in \(M\) and~\(M'\), respectively.  The image of~\(p_v\) generates
  \(\K_0^+(\Cst(G)|_M)\), and any other idempotent that generates
  \(\K_0^+(\Cst(G)|_M)\) is Murray--von Neumann equivalent to it
  because~\(\Cst(G)|_M\) is Morita equivalent to \(\Cont(\T)\) by
  Proposition~\ref{pro:Hong-Szymanski} and
  \(\prop(\Cont(\T)) \cong \N\).  Similarly, any generator of
  \(\K_0^+(\Cst(G')|_{M'})\) is Murray--von Neumann equivalent
  to~\(p_{v'}\) in \(\Cst(G')_{M'}\), and the corner in
  \(\Cst(G')_{M'}\) that it cuts out is isomorphic to~\(\Cont(\T)\)
  as well.  Since the projective module~\(p_v \Hilm_{A B,M}\)
  over~\(\Cst(G')_{M'}\) is a generator of
  \(\prop(\Cst(G')|_{M'})\), it is isomorphic to
  \(\Cst(G')|_{M'} p_{v'}\).  We fix such an isomorphism
  \(W\colon p_v \Hilm_{A B,M} \congto \Cst(G')|_{M'} p_{v'}\).  Let
  \(z \in \Cont(\T) \cong p_v \Cst(G)|_M p_v \cong \Cont(\T)\) be
  the canonical unitary generator, and also write~\(z\) for the
  operator of left multiplication by~\(z\) on~\(p_v \Hilm_{A B,M}\).
  Then \(W z W^*\) is a unitary in
  \(\Comp(\Cst(G')|_{M'} p_{v'}) \cong p_{v'}\Cst(G')|_{M'} p_{v'}
  \cong \Cont(\T)\).  Since~\(\Hilm_{A B,M}\) induces an isomorphism
  on~\(\K_1\), the image of~\(W z W^*\) in
  \(\K_1(\Cont(\T)) \cong \Z\) is a generator.  Since \(\Cont(\T)\)
  is \(\K_1\)\nb-bijective, say, by
  Theorem~\ref{the:graph_K1-bijective}, \(W z W^*\) is homotopic to
  either~\(z\) or~\(z^{-1}\).  Then we may write
  \(W z W^* = \bar\zeta \cdot z^{\pm1}\) with a
  unitary~\(\bar\zeta\) in \(p_{v'} \Cst(G')|_{M'} p_{v'}\) that is
  homotopic to the unit.  Then the unitary
  \(W^* \bar\zeta^* W \in \Comp(p_v \Hilm_{A B,M})\) is homotopic
  to~\(1\) as well.  Therefore, we may lift \(W^* \bar\zeta^* W\) to
  a unitary \(\Upsilon_v \in \Comp(p_v \Hilm_{A B})|_{H_2}\)
  homotopic to~\(1\) along the quotient map to
  \(\Comp(p_v \Hilm_{A B,M})\).  When we multiply the
  unitary~\(U_v\) in our description of~\(\Hilm_{A B}\)
  by~\(\Upsilon_v\), we do not change the homotopy class
  of~\(\Hilm_{A B}\).  Thus the induced maps on \(\K_0^+\)
  and~\(\K_1\) also remain the same.  By
  Proposition~\ref{pro:Hong-Szymanski}, the element
  \(z\in \Cont(\T) \subseteq p_v \Cst(G|_M) p_v\) is the product of
  the edge generators in the cycle in~\(M\) based at~\(v\).  So it
  contains a matrix coefficient of~\(U_v\) exactly once.  Therefore,
  after multiplying~\(U_v\) by~\(\Upsilon_v\), we have arranged that
  the induced map
  \begin{equation}
    \label{eq:star_iso_induced_by_HAB}
    \Cont(\T)
    \cong p_v \Cst(G)|_M p_v
    \to \Comp(\Hilm_{A B, M})
    \cong p_{v'} \Cst(G')|_{\psi(M)} p_{v'}
    \cong \Cont(\T)
  \end{equation}
  is the isomorphism induced by \(z\mapsto z^{\pm1}\).
  Thus~\(\Hilm_{A B,M}\) has become a Morita equivalence bimodule.

  The only ingrediet of~\(\Hilm_{B A}\) that we have changed to
  achieve this is the unitary~\(U_v\) for one vertex \(v\in M\).
  These changes for different \(M\in\mathcal{M}_\tau(G)\) do not
  interfere with each other.  Therefore, we may arrange
  for~\(\Hilm_{A B,M}\) to become a Morita equivalence bimodule for
  all \(M\in\mathcal{M}_\tau(G)\) simultaneously.
  
  Lemma~\ref{lem:graph_back_forth_homotopy} gives a
  \(B\)\nb-\(A\)-correspondence~\(\Hilm_{B A}\) that is inverse
  to~\(\Hilm_{A B}\) up to homotopy.  Even more, the homotopies that
  are implicit here automatically preserve all gauge-invariant
  ideals because equivalent projections generate the same ideal.
  Therefore, for each \(M\in\mathcal{M}_\tau(G)\), there are induced
  correspondences \(\Hilm_{A B,M}\) and~\(\Hilm_{B A,M}\) between
  \(\Cst(G)|_M\) and \(\Cst(G')|_{\psi(M)}\) that are inverse to
  each other up to homotopy.
  
  In the same way as above, we may replace~\(\Hilm_{B A}\) by a
  homotopic proper correspondence that induces Morita equivalence
  bimodules on the subquotients for all
  \(M'\in\mathcal{M}_\tau(G')\).  Even more, we arranged both
  \(\Hilm_{A B}\) and \(\Hilm_{B A}\) to map the preferred
  generator~\(z\) of \(\Cont(\T)\) to \(z\) or~\(z^{-1}\).  Hence so
  do the composites \(\Hilm_{A B} \otimes_B \Hilm_{B A}\)
  and \(\Hilm_{B A} \otimes_A \Hilm_{A B}\).  Since these
  composites are homotopic to the identity correspondence, the map
  \(z\mapsto z^{-1}\) cannot occur here.  Therefore, we know that
  the composite correspondences
  \(\Hilm_{A B} \otimes_B \Hilm_{B A}\) and
  \(\Hilm_{B A} \otimes_A \Hilm_{A B}\) both induce the
  identity bimodules on the subquotients for elements of
  \(\mathcal{M}_\tau(G)\).  For each \(M\in \mathcal{M}_\tau(G)\),
  the homotopy between
  \(\Hilm_{A B} \otimes_B \Hilm_{B A}\) and the identity
  correspondence on~\(A\) induces a homotopy of \Star{}homomorphisms
  \(\Cont(\T) \to \Cont(\T)\) of the form
  \(z\mapsto \bar\zeta_t \cdot z\) for some continuous path of
  unitaries~\(\bar\zeta_t\), where we already arranged that
  \(\bar\zeta_t=1\) for \(t=0,1\).  We may lift these unitaries as
  above and correct the \(\Cst(G)\)-\(\Cst(G)\)-correspondences
  \((\Hilm_t)_{t\in[0,1]}\) in the homotopy, so that \(\Hilm_0\)
  and~\(\Hilm_1\) remain the same and each~\(\Hilm_t\) induces the
  identity map on the corner \(\Cont(\T) \cong p_v \Cst(G)|_M p_v\)
  for all \(M\in\mathcal{M}_\tau(G)\) and some \(v\in M\) in the
  unique cycle in~\(M\).  This yields a new homotopy between
  \(\Hilm_{A B} \otimes_B \Hilm_{B A}\) and the identity
  correspondence, with the extra property that each correspondence
  in the homotopy induces the identity Hilbert bimodule on
  \(\Cst(G)|_M\) for all \(M\in\mathcal{M}_\tau(G)\).  The same
  change of homotopy is possible for the other composite
  \(\Hilm_{B A} \otimes_A \Hilm_{A B}\).
\end{proof}

\begin{bibdiv}
  \begin{biblist}
    \bibselect{references}
  \end{biblist}
\end{bibdiv}
\end{document}